\def\BibTeX{{\rm B\kern-.05em{\sc i\kern-.025em b}\kern-.08em
    T\kern-.1667em\lower.7ex\hbox{E}\kern-.125emX}}
\begin{document}

\newcommand{\bx}{{\mathbf{x}}}
\newtheorem{assumption}{Assumption}
\newtheorem{definition}{Definition}
\newtheorem*{notation*}{Notation}
\newtheorem{theorem}{Theorem}
\newtheorem{corollary}{Corollary}
\newtheorem{lemma}{Lemma}
\newtheorem{remark}{Remark}
\newtheorem{proposition}{Proposition}
\renewcommand{\algorithmicrequire}{\textbf{Input:}}
\renewcommand{\algorithmicensure}{\textbf{Output:}}

\newcommand{\beq}{\begin{equation}}
\newcommand{\eeq}{\end{equation}}
\newcommand{\beqa}{\begin{eqnarray}}
\newcommand{\eeqa}{\end{eqnarray}}
\newcommand{\beqas}{\begin{eqnarray*}}
\newcommand{\eeqas}{\end{eqnarray*}}
\newcommand{\bi}{\begin{itemize}}
\newcommand{\ei}{\end{itemize}}
\newcommand{\gap}{\hspace*{2em}}
\newcommand{\sgap}{\hspace*{1em}}
\newcommand{\vgap}{\vspace{.1in}}
\newcommand{\nn}{\nonumber}
\newcommand{\bRe}{\bar \Re}
\newcommand{\IR}{\makebox{\sf I \hspace{-9.5 pt} R \hspace{-7.0pt} }}
\newcommand{\defeq}{\stackrel{\triangle}{=}}

\setcounter{page}{1}
\def\eqnok#1{(\ref{#1})}

\newcommand{\tos}{\rightrightarrows}
\newcommand{\R}{\mathbb{R}}
\newcommand{\calS}{{\cal S}}
\newcommand{\cG}{{\cal G}}
\newcommand{\cL}{{\cal L}}
\newcommand{\cX}{{\cal X}}
\newcommand{\cC}{{\cal C}}
\newcommand{\cZ}{{\cal Z}}
\newcommand{\Z}{{\cal Z}}
\newcommand{\lam}{{\lambda}}
\newcommand{\blam}{{\bar \lambda}}
\newcommand{\A}{{\cal A}}
\newcommand{\tA}{{\tilde A}}
\newcommand{\tC}{{\tilde C}}
\newcommand{\cN}{{\cal N}}
\newcommand{\cK}{{\cal K}}
\newcommand{\norm}[1]{\left\Vert#1\right\Vert}
\newcommand{\ind}[1]{I_{#1}}
\newcommand{\inds}[1]{I^*_{#1}}
\newcommand{\supp}[1]{\sigma_{#1}}
\newcommand{\supps}[1]{\sigma^*_{#1}}
\newcommand{\simplex}[1]{\Delta_{#1}}
\newcommand{\abs}[1]{\left\vert#1\right\vert}
\newcommand{\Si}[1]{{\cal S}^{#1}}
\newcommand{\Sip}[1]{{\cal S}^{#1}_{+}}
\newcommand{\Sipp}[1]{{\cal S}^{#1}_{++}}
\newcommand{\inner}[2]{\langle #1,#2\rangle}
\newcommand{\Inner}[2]{\big \langle #1\,,#2 \big \rangle}

\newcommand{\ttheta}{{\tilde \theta}}
\newcommand{\cl}{\mathrm{cl}\,}
\newcommand{\co}{\mathrm{co}\,}
\newcommand{\argmin}{\mathrm{argmin}\,}
\newcommand{\cco}{{\overline{\co}}\,}
\newcommand{\ri}{\mathrm{ri}\,}
\newcommand{\inte}{\mathrm{int}\,}
\newcommand{\interior}{\mathrm{int}\,}
\newcommand{\bd}{\mathrm{bd}\,}
\newcommand{\rbd}{\mathrm{rbd}\,}
\newcommand{\cone}{\mathrm{cone}\,}
\newcommand{\aff}{\mathrm{aff}\,}
\newcommand{\lin}{\mathrm{lin}\,}
\newcommand{\lineal}{\mathrm{lineal}\,}
\newcommand{\epi}{\mathrm{epi}\,}
\newcommand{\sepi}{\mbox{\rm epi}_s\,}
\newcommand{\dom}{\mathrm{dom}\,}
\newcommand{\Dom}{\mathrm{Dom}\,}
\newcommand{\Argmin}{\mathrm{Argmin}\,}
\newcommand{\lsc}{\mathrm{lsc}\,}
\newcommand{\di}{\mbox{\rm dim}\,}
\newcommand{\EConv}[1]{\mbox{\rm E-Conv}(\Re^{#1})}
\newcommand{\EConc}[1]{\mbox{\rm E-Conc}(\Re^{#1})}
\newcommand{\Conv}[1]{\mbox{\rm Conv}(\R^{#1})}
\newcommand{\Conc}[1]{\mbox{\rm Conc}(\Re^{#1})}
\newcommand{\cball}[2]{\mbox{$\bar {\it B}$}(#1;#2)}
\newcommand{\oball}[2]{\mbox{\it B}(#1;#2)}
\newcommand{\Subl}[2]{{#2}^{-1}[-\infty,#1]}
\newcommand{\sSubl}[2]{{#2}^{-1}[-\infty,#1)}
\newcommand{\bConv}[1]{\overline{\mbox{\rm Conv}}\,(\R^{#1})}
\newcommand{\bEConv}[1]{\mbox{\rm E-C}\overline{\mbox{\rm onv}}\,(\Re^{#1})}
\newcommand{\asympt}[1]{{#1}'_{\infty}}
\newcommand{\sym}[1]{{\cal S}^{#1}}
\newcommand{\spa}{\,:\,}
\newcommand{\barco}{\overline{\mbox{co}}\,}
\newcommand{\tx}{\tilde x}
\newcommand{\ty}{\tilde y}
\newcommand{\tz}{\tilde z}
\newcommand{\tm}{\tilde m}
\newcommand{\mConv}[1]{\overline{\mbox{\rm Conv}}_\mu\,(\R^{#1})}
\newcommand{\la}{\langle\,}
\newcommand{\ra}{\,\rangle}
\def\iff{\Leftrightarrow}
\def\solution{\noindent{\bf Solution}. \ignorespaces}
\def\endsolution{{\ \hfill\hbox{%
      \vrule width1.0ex height1.0ex
          }\parfillskip 0pt}\par}
\def\NN{{\mathbb{N}}}

\title{Enhancing Convergence of  Decentralized  Gradient Tracking \\ under the KL Property}
\author{Xiaokai Chen
\thanks{School of Industrial Engineering, Purdue University, West Lafayette, IN 47906 (email: {\tt chen4373@purdue.edu}).}\qquad Tianyu Cao\thanks{School of Industrial Engineering, Purdue University, West Lafayette, IN 47906 (email:  {\tt cao357@purdue.edu}). }\qquad Gesualdo Scutari\thanks{School of Industrial Engineering, Purdue University, West Lafayette, IN 47906 (email {\tt gscutari@purdue.edu}). \\
This work has been supported by the ORN Grant N. N000142412751. }
	}
\date{December 12, 2024}

\maketitle

\begin{abstract}
\label{sec:abstract}
We study decentralized multiagent optimization over networks, modeled as undirected graphs. The optimization problem consists of minimizing a nonconvex smooth function plus a convex extended-value function, which enforces constraints or extra structure on the solution (e.g., sparsity, low-rank). We further assume that the objective function satisfies the Kurdyka-Łojasiewicz (KL) property, with given exponent   $\theta\in [0,1)$. The KL property  is satisfied by several (nonconvex) functions of practical interest, e.g., arising from machine learning applications; in the centralized setting, it permits to achieve strong convergence guarantees. Here we establish  convergence of the same type for  the notorious  decentralized gradient-tracking-based algorithm SONATA, first proposed in\cite{SunDanScu19}. Specifically, \textbf{(i)} when $\theta\in (0,1/2]$,  the sequence generated by SONATA  converges to a stationary solution of the problem at  R-linear rate; \textbf{(ii)} when  $\theta\in (1/2,1)$,  sublinear rate  is certified; and finally \textbf{(iii)}   when $\theta=0$, the iterates will either converge in a finite number of steps or converges at R-linear rate. This matches the convergence behavior of centralized proximal-gradient algorithms except when $\theta=0$. Numerical results validate our theoretical findings. 

{\bf Key words.}
distributed optimization, decentralized methods, nonconvex optimization, gradient-tracking, Kurdyka-Łojasiewicz, linear rate. 
\end{abstract}

\section{Introduction}
\label{sec:introduction}
This paper studies nonconvex,   nonsmooth optimization problem over networks of the following form:
\begin{equation}
\begin{array}{rl}
\underset{{x}\in\mathbb{R}^{d}}{\min} & u({x}) := f({x})+ r({x}),
\end{array}
\tag{P}
\label{problem}
\end{equation}
where\vspace{-0.2cm}
\begin{equation}
f({x}) := {\frac{1}{m}}\sum^{{m}}_{i=1}{f_{i}({x})}, 
\label{e1}
\end{equation}
\noindent
 with each $f_{i} : \mathbb{R}^{d} \to \mathbb{R}$ \ being the cost function of agent $i=1,\ldots, m$, assumed to be $L_{i}$-smooth (possibly nonconvex) and known only to agent $i$;\ $r : \mathbb{R}^{d} \to \mathbb{R}\cup \{-\infty,+\infty\}$ \ is a nonsmooth convex (extended-value) function, known to all agents, which can be used to enforce shared constraints or specific structures on the solution (e.g., sparsity). Agents are connected through a communication network, modeled as a fixed, undirected graph.

Problem~\eqref{problem} has a wide range of applications, such as network information processing, telecommunications and multi-agent control. Here, we are particularly interested in machine learning problems, specifically supervised learning. Examples include logistic regression, SVM, LASSO and deep learning. Each $f_{i}$ represents the empirical risk measuring the mismatch between the model (parameterized by ${x}$) to be learnt and the dataset owned by agent $i$; and  $r$ plays the role of the regularizer.  A vast range of these problems possess some structural properties, notably in the form of some growth property, which   can be leveraged to enhance convergence of decentralized solution methods.
In this paper we target  the so-called \textit{Kurdyka-Łojasiewicz} (KL) property, due to its vast range of applicability. The class of functions satisfying the KL property is ubiquitous, ranging from applications  in optimization \cite{liu2019quadratic,attouch2009convergence,attouch2010proximal,attouch2013convergence,bolte2014proximal,frankel2015splitting,li2018calculus}, machine learning (logistic regression, LASSO,  and the Principal Component Analysis (PCA) are notorious examples),   deep neural network~\cite{lau2018proximal},  dynamical
systems, and partial differential equations; see~\cite{BDLM10} and the references therein.  

The KL property was first introduced by Łojasiewicz~\cite{Lo63} for  real
analytic functions, then Kurdyka extended it to functions defined
on a generalization of semialgebraic and subanalytic geometry called the o-minimal structure~\cite{Ku98}. More recently, it was  extended to nonsmooth subanalytic
functions by Bolte et al.~\cite{bolte2007lojasiewicz}. In its simplest form~\cite{attouch2010proximal}, we say that a $C^{1}$ function $f:\mathbb{R}^d\to \mathbb{R}$       satisfies the KL property with exponent $\theta\in [0,1)$ at $\bar{x}\in \mathbb{R}^d$   (possibly critical) if there exists    $\theta\in[0,1)$ and some problem-dependent $c>0$ such that   
\begin{equation}\label{eqn420}
  ||\nabla f(x)||\, \geq c \, |f(x)-f(\bar{x})|^{\theta}.  
\end{equation} 
Ensuring a lower bound on the gradient magnitude  in the proximity of $\bar x$, this condition stipulates that the function  $f$ 
  cannot be arbitrarily flat around  $\bar x$. As for its nonsmooth counterpart, one can refer to Sec. \ref{subsec:KL} for more details.

 In the recent years, the KL property has been successfully leveraged to enhance
   convergence of several   solution methods for nonconvex, nonsmooth, {\it centralized} optimization problems;  examples include inexact first-order methods~\cite{attouch2013convergence},  alternating (proximal) minimization     algorithms~\cite{attouch2010proximal}, and parallel methods~\cite{cannelli2019asynchronous}.  In particular,   under the KL,     the {\it entire} sequence generated by these algorithms  is proved to converge to a stationary solution (in contrast with the much weaker guarantees in the absence of KL,  certifying subsequence convergence). The convergence rate  is dictated by the \textit{KL exponent} $\theta$, namely: when $\theta\in(0,1/2]$,  linear   rate is certified  whereas  sublinear convergence is proved when $\theta\in(1/2,1)$; finally, when $\theta=0$,   the sequence  converges in a {\it finite} number of iterations. 

 \begin{figure}[h]
\hspace*{-0.01in}
\centering
\includegraphics[scale=0.33]{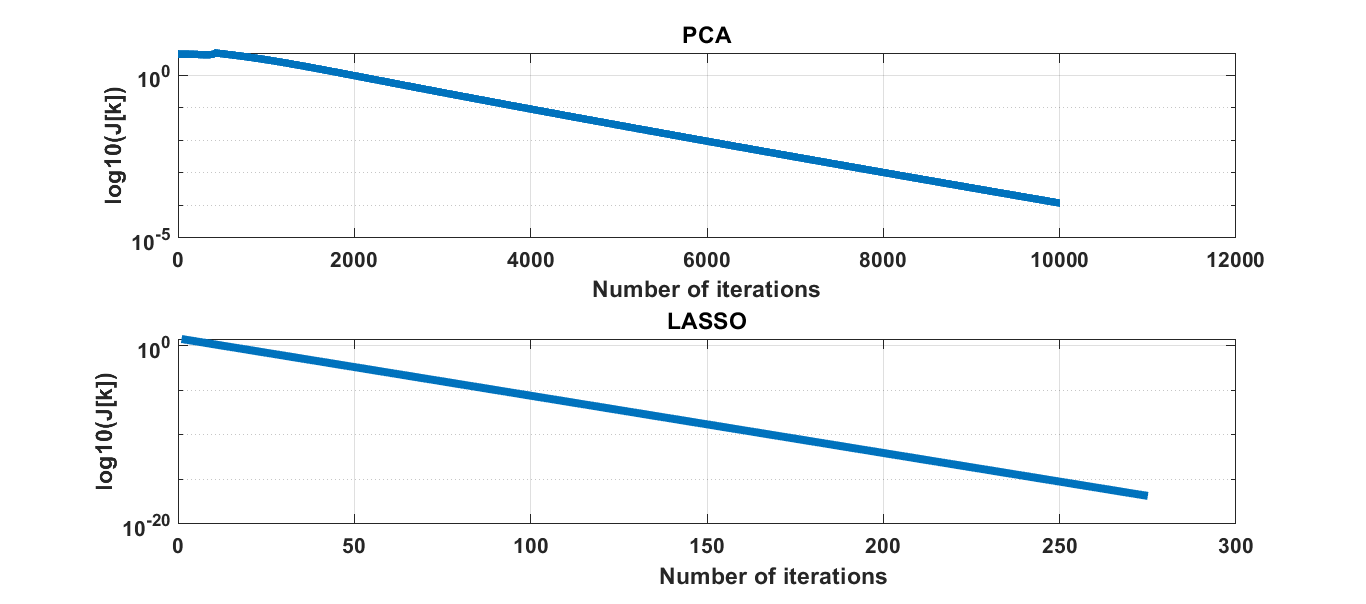}
\caption{Distance of the iterates ($x_i^{\nu}$, $i\in[m]$ ) from  a stationary solution ($x^*$) of the PCA (upper panel) and LASSO (lower panel) problems,  defined as $\mathcal{P}^\nu := \sqrt{\sum_{i=1}^m\|x_i^{\nu}-x^*\|^2}$, versus the iterations $\nu$. }\vspace{-0.5cm}
\label{fig1} 
\end{figure}

    This leads to the natural question whether these strong convergence guarantees    can be inherited by   decentralized algorithms. To the best of our knowledge, the existing literature on distributed algorithms does not provide a satisfactory answer (see Sec.~\ref{subsec:litrev} for a detailed review of the state of the art). Here we only point out that none of the existing analyses of decentralized algorithms \cite{scutari2019distributed,Ye20,LoScu15,SunDanScu19,shi2015extra}  applicable to problem (\ref{problem}) exploit the KL property  of $u$.   As a consequence, if nevertheless  invoked, they  
would  predict much more pessimistic convergence rates than what is certified in the centralized setting. For instance, when applied to \eqref{problem} under the KL   with   $\theta\in (0,1/2]$, they would claim
{\it sublinear} convergence of some  surrogate distance from stationarity, which contrasts the much more favorable    {\it linear} convergence of the {\it entire} sequence  established for a variety of centralized first-order methods applied to \eqref{problem}.  These pessimistic predictions are confuted by   experiments. Two   examples are reported in   Fig.~\ref{fig1}, where the the decentralized algorithm  SONATA \cite{scutari2019distributed} is applied to two nonconvex instances of \eqref{problem}  over a network modeled as an Erdos-Renyi  graph--the LASSO problem with SCAD regularizer ~\cite{li2018calculus} (cf. Sec.~\ref{subsec:KL}) and the PCA problem. The objective functions of both problems satisfy the KL property with exponent $1/2$. The figures plot  the distance of the SONATA's iterates  from a stationary solution versus number of iterations, certifying    
{\it linear} convergence of the sequence for both problems. This fact has no theoretical backing.

 This paper fills  this gap. 
    As a case-study, we focus on the gradient tracking-based decentralized scheme SONATA, firstly proposed in~\cite{SunDanScu19}. We provide a full characterization of its  convergence rate,  for the entire   range of the KL exponent $\theta\in [0,1)$,  matching   convergence results  of the  proximal gradient algorithm in the centralized setting.  \vspace{-0.1cm}

\subsection{Main contributions}
\label{subsec:contribution}
\noindent Our technical contributions can be summarized as follow.  \smallskip

 \noindent $\bullet$ \textbf{Convergence rate under the KL of $u$:}  We establish convergence of the sequence generated by   SONATA  to stationary solutions of  \eqref{problem}, and characterize its convergence rate, under different values of the KL exponent of  of $u$. More specifically, 
 \begin{itemize}
     \item [i)] When  $\theta\in(0,1/2]$, the sequence generated by SONATA is proved to converge {\it R-linearly}. The number of communication rounds    to reach an $\epsilon$-stationary solution (under proper tuning)  reads $$\mathcal{\tilde{O}}\left(\frac{L}{\kappa^{1/\theta}}\frac{1}{1-\rho}\text{log}(1/\epsilon) \right),$$   where $L:= \frac{1}{m}\sum_{i=1}^{m}L_{i}$, with $L_i$ being   the smoothness constant of $f_i$;   $\kappa$ is a  parameter  related with the KL property of $u$ (see Def.~\ref{def-KL}), and  $\rho$   represents the network connectivity (see \eqref{eq:rho} for the formal definition).   The   $\mathcal{\tilde{O}}$ notation hides log-dependencies on $L$, $\kappa$, $\theta$ and $L_{\text{mx}}:=\max_{i\in[m]}L_i$.  \smallskip 
     \item [ii)] For   $\theta\in(1/2,1)$, sublinear convergence of the sequence is established, yielding  rates for $\epsilon$-stationarity of the order $$\mathcal{O}(\epsilon^{-\frac{2\theta-1}{1-\theta}}).$$
     \item[iii)] When  $\theta=0$, the  sequence either convergences to a stationary solution in a  finite number of iterations or converges at R-linear rate independent of $\kappa$.
 \end{itemize}    Notably,  the rates in (i) and (ii)  match those of the centralized proximal gradient algorithm (up to universal constants).  \smallskip 
 
 \noindent $\bullet$ \textbf{New convergence analysis:} We introduce a new line of analysis  that explicitly leverage the KL property, departing from traditional techniques     employed    to study   centralized and decentralized  algorithms, as detailed next.

Classical proofs--establishing linear convergence of first-order methods under the KL property (with $\theta=1/2)$  of the objective function--focused primarily on centralized settings~\cite{attouch2009convergence,attouch2010proximal,attouch2013convergence,bolte2014proximal,frankel2015splitting,li2018calculus,cannelli2019asynchronous}. They strongly rely on the objective function's monotonic decrease along algorithmic trajectories.  However, when it comes to decentralized algorithms,     this monotonicity    is disrupted by disagreements among agents' iterates, introducing perturbations that impair the descent of the  objective function $u$. This issue is addressed in \cite{JW18,Amir18} by constructing a Lyapunov function that suitably combines agents' objectives $f_i$'s with  consensus errors, and monotonically decreases along the algorithm trajectories, albeit limited to unconstrained, smooth instances of (P).     Assuming this Lyapunov function is a KL function  with $\theta=1/2$, linear convergence of the decentralized algorithms  \cite{JW18,Amir18}  was  proved, mirroring techniques used in centralized settings~\cite{attouch2009convergence,attouch2010proximal,bolte2014proximal,frankel2015splitting,li2018calculus,attouch2013convergence}. Unfortunately, the KL property of the Lyapunov function or its exponent value {\it do not transfer to the objective function} $u$  of the optimization problem,    and vice versa. This is because the KL property is not closed with respect to the operations used to  build  the Lyapunov function from the objective function. Consequently, 
 the challenge of establishing convergence guarantees for decentralized algorithms under the KL property of the objective function--comparable to those certified in centralized settings--remains unresolved.

Our novel approach hinges   on  the KL property of the objective function $u$. First, we establish asymptotic convergence of the objective function gap and consensus and tracking errors, along  the iterates produced by the SONATA algorithm, through the monotonically decaying trajectory of a suitably constructed Lyapunov function. This guarantees that, after a sufficiently large but finite number of iterations--such that  the function value gap falls below a critical threshold--the KL property of the objective function can be engaged. Consequently   convergence of the entire sequence to a critical point of $u$ is established,  at enhanced   convergence  rate.   \vspace{-0.2cm}
        
\subsection{Related works}\label{subsec:litrev} 

\noindent \textbf{Centralized setting:}   
The  KL  property has been widely utilized in the convergence analysis of centralized optimization methods. The pioneering study of~\cite{PRB05} marks the initial use of the KL property to demonstrate sequence convergence to  stationary solutions of a variety of  algorithms satisfying certain decent properties. However,  no convergence rate analysis was established.  The followup works ~\cite{attouch2009convergence,attouch2010proximal,bolte2014proximal,frankel2015splitting,li2018calculus,attouch2013convergence,cannelli2019asynchronous} did provide  a convergence rate analysis of these  algorithms, 
specifically: the proximal gradient~\cite{attouch2009convergence},  the alternating proximal minimization algorithm~\cite{attouch2010proximal}, the inexact Gauss–Seidel method~\cite{attouch2013convergence}, the Proximal alternating linearized minimization algorithm~\cite{bolte2014proximal},  the alternating forward-backward splitting method~\cite{frankel2015splitting}, and the (parallel) asynchronous method known as FLEXA    \cite{cannelli2019asynchronous}.   
Other studies \cite{bento2024convergence, ochs2014ipiano, qian2023convergence, qiu2024kl} have expanded the class of KL functions and relaxed algorithmic constraints while maintaining the same strong convergence guarantees. Specifically, \cite{bento2024convergence} extends the KL property to include symmetric and strong KL variations; and   \cite{ochs2014ipiano, qian2023convergence, qiu2024kl} modify algorithm requirements by permitting an additive error term in the relative error condition \cite{ochs2014ipiano} and accommodating a nonmonotone descent flow \cite{qian2023convergence}. Notably, \cite{qiu2024kl} integrates both modifications in their analytical framework.\smallskip

 \noindent\textbf{Decentralized setting:} Decentralized algorithms for various instances of Problem~\eqref{problem} have received significat attention in the last few years 
\cite{hong2022divergence,bianchi2012convergence,di2016next,wai2017decentralized,tang2018d,hong2017prox,scutari2019distributed,tatarenko2017non,zhu2012approximate}. Specifically, early studies 
~\cite{hong2022divergence,tang2018d,hong2017prox,tatarenko2017non} consider smooth objectives (i.e., $r=0$)   whereas~\cite{di2016next,wai2017decentralized,scutari2019distributed,bianchi2012convergence,zhu2012approximate}  extended to constraints or composite structures, under the assumption of bounded (sub)gradient of the objective loss along the iterates of the algorithm.       This restriction has been removed in~\cite{zhu2012approximate,scutari2019distributed}, with  \cite{scutari2019distributed} providing also  a convergence rate analysis (applicable to time-varying networks).  
None of the aforementioned works   exploit any  growing property of the objective function, such as the KL, if any. This leads to pessimistic convergence guarantees (asymptotic convergence only or sublinear convergence rates), which contrasts with the results in the centralized setting discussed above and is inconsistent with the numerical results presented in Fig.~1 (Sec.~\ref{sec:introduction}) and Sec.\ref{sec:numeric result}.  

Works exploiting explicitly some (postulated) function growth  to enhance  convergence guarantees of decentralized algorithms applied to special  instances of (P), include \cite{SLG19,XSTKT19,XSTTK20,Ye20,JW18,Amir18}. Specifically, linear rate of the considered decentralized algorithms is proved  under the restricted secant condition~\cite{SLG19,XSTKT19}, the Polyak-Łojasiewicz (PL) condition~\cite{XSTTK20}, and the Luo-Tseng error bound condition~\cite{Ye20}. However,   in the nonconvex setting, all these conditions are more stringent than the  KL property (with exponent 1/2) \cite{PL18}.   Furthermore, convergence     techniques  therein   closely mirror those used for strongly convex functions, which are not useful in the setting considered in this paper.  
 
On the other hand, while studies such as \cite{Amir18,JW18} have utilized the KL property in the convergence analysis of some decentralized algorithms,  they have not conclusively achieved the desired outcomes. As discussed in Sec \ref{subsec:contribution}
, these works postulate the KL property of specifically constructed Lyapunov functions that meet the necessary conditions for convergence, rather than of the original objective functions. The link between the KL property of the objective function and such Lyapunov functions remains unclear, highlighting a gap in the current literature. \vspace{-.2cm}


\subsection{Notation and paper organization}
Throughout the paper, we will use the following notation.  
For any integer $m$, we write  $[m]:=\{1,2,\cdots,m\}$. We user  the convention that $0^0=0$. We denote by $[c_1\leq(<) u\leq(<) c_2]:=\{x:c_1\leq(<) u(x)\leq(<) c_2\}$   the level set of the function $u$ at $c\in\mathbb{R}$. We will use capital letters to represent matrices. In particular, $1$  denotes the vector of all ones (whose dimensions are clear from the context);  $J:=11^{\top}/n$ is the projection onto the consensus space.   We use $\|\cdot\|_p$ to denote the $\ell_p$-norm of any input vector  ($\|\cdot\|$ will be the Euclidean norm) whereas   $\|\cdot\|_p$ represents  the operator norm induced by the $\ell_p$-norm when the input is a matrix (with  $\|\cdot\|$ being  the Frobenius norm). 
 Several operators appear in the paper.  
 Given a proper, nonconvex function $f$,   $\partial f(x)$     denotes the  (limiting) subdifferential    of $f$ at $x$~\cite[Def. 8.3(b)]{rockafellar1998variational}.  Given $x\in\mathbb{R}^d$ and $r:\mathbb{R}\rightarrow\mathbb{R}\cup\{-\infty,\infty\}$, $\texttt{prox}_{\alpha r}(x)$ denotes the     proximal operator, defined as  $$\texttt{prox}_{\alpha r}(x):=\displaystyle{\text{argmin}}_{y\in \mathbb{R}^d}r(y)+\frac{1}{2\alpha}\|x-y\|^2.$$


The rest of the paper is organized as following.  Sec.~\ref{sec:problem} introduces the problem formulation  along with the underlying  assumptions. Sec.~\ref{sec:asymptotic convergence} presents the asymptotic convergence analysis of SONATA, which is instrumental to engage the   KL property to enhance the convergence rate.  Sec.~\ref{sec:rate} contains the main technical result of the paper: the convergence rate analysis of SONATA under the KL property.    Finally,  some numerical experiments are presented   in Sec.~\ref{sec:numeric result}.
 
\section{Problem Setup and Background}
\label{sec:problem}
\noindent We study  the Problem \eqref{problem} over a communication network. Followings are standard assumptions on \eqref{problem}. 
\begin{assumption}[objective function]
\hfill
\label{ass:function}
\begin{itemize}
    \item [(i)] Each $f_i:\mathbb{R}^d\rightarrow \mathbb{R}$ is continuously differentiable, and $\nabla f_i$ is $L_i-$Lipschitz, with $L_i<\infty$;
    \item[(ii)] $r:\mathbb{R}^d\rightarrow\mathbb{R}\cup\{-\infty,\infty\}$ is convex, proper and lower-semicontinuous; 
    \item[(iii)] $u:\mathbb{R}^d\rightarrow \mathbb{R}$ is lower bounded by some $\underline{u}\in\mathbb{R}$. 
\end{itemize}
\end{assumption}
Associated with Assumption~\ref{ass:function}, we define the following quantities used throughout the paper. 
\begin{equation}
    L_\text{mx}
    :=\max_{i\in[m]} L_i, \quad \text{and}\quad L:=\frac{1}{m}\sum_{i=1}^m L_i. 
\end{equation}

  The communication network of the agents is modeled as a time-invariant undirected graph $\mathcal{G}:=(\mathcal{V},\mathcal{E})$, with the vertex set $\mathcal{V} := \{ 1,...,m \}$  and the edge set $\mathcal{E}:=\{(i,j) | i,j\in\cal{V}\}$ representing the set of agents and the communication links, respectively. Specifically, $(i,j)\in\mathcal{E}$ if and only if there exists a communication link between agent $i$ and $j$. We make the blanket assumption that  $\mathcal{G}$ is connected.
 
\subsection{The KL property: definitions and illustrative examples}
\label{subsec:KL}
\noindent In this section, we formally introduce the KL property for $u$ along with some examples of KL functions arising from several  applications.  

The general definition of the  KL  can be found in~\cite{attouch2013convergence}. Here we focus more specifically on   functions that are   sharp up to some reparametrization, using  the  so-called \textit{desingularizing} function, denoted by $\phi:\mathbb{R}\rightarrow\mathbb{R}$. This function  turns a \textit{singular} region--a region where the gradients are arbitrarily small--into a \textit{regular} region--where the gradients are bounded away from zero. A widely used  desingularizing function is  $\phi(s) := cs^{1-\theta}$, for some  $\theta\in [0,1)$ and $c>0$, firstly introduced in~\cite{Lo63}. More specifically, the KL property equipped with this desingularizing function reads as follows.  

\begin{definition}{(KL property)\cite{li2018calculus}}\label{def-KL}
A proper closed function $f$ satisfies the KL property at  $\bar x\in \text{dom} f$   with exponent $\theta\in[0,1)$ if there exists a neighborhood $E$ of $\bar x$ and  parameters $\kappa,\eta\in (0,\infty)$ such that 
\begin{equation}
\label{eq353}  \|g_x\|\geq \kappa(f(x)-f(\bar x))^{\theta},
\end{equation} for all $x\in E\cap [f(\bar x)<f<f(\bar x)+\eta]$   and   $g_x\in\partial f(x)$. 
We call the function $f$ a KL function with exponent $\theta$ if it satisfies the KL property at any point $\bar x\in \text{dom} \partial f$, with the same exponent $\theta$.
\end{definition} 


\subsubsection{Some illustrative examples} \label{sub-examples} We listed some motivating examples of functions $u$ in the form~\eqref{problem} that satisfies the KL property, with specified exponent $\theta\in[0,1)$.

\smallskip
 \textit{{\bf (i)} Sparse Linear Regression (with $\ell_1$ regularization):}  The sparse linear regression problem consists in estimating  a $s$-sparse parameter $x^{*}\in \mathbb{R}^d$ via a set of linear measurements, corrupted by noise, that is, 
   $y=Ax^{*}+w$, where $y\in \mathbb{R}^N$ is the vector of measurements,  $A\in\mathbb{R}^{N\times d}$ is the design matrix, and  $w\in \mathbb{R}^N$ is the observation noise. Assuming each agent in the network owns a subset $y_i\in \mathbb{R}^n$ of the overall $N$ measurements $y$ along with the design matrix $A_i\in \mathbb{R}^{n\times d}$, with each $y_i$ (resp. $A_i$) such that $y=[y_1^\top,\ldots, y_m^\top]^\top$ (resp. $A=[A_1^\top,\ldots, A_m^\top]^\top$), the decentralized estimation of $x^\star$ via the LASSO estimator is an instance of Problem (P), with $f_i(x):=({1}/{2})\|A_ix-b_i\|^2$ and 
      $ r(x) := \lambda||{x}||_{1}$,  $\lambda>0$. The overall resulting loss $u$ is KL with  exponent $\theta={1}/{2}$~\cite{li2018calculus}. \smallskip

 \textit{{\bf (ii)} Sparse linear Regression with SCAD regularization: } 
 The LASSO formulation, as discussed in the previous example,  tends to yield   biased estimators for large regression coefficients. To address this issue, the literature suggests replacing the $\ell_1$ norm with nonconvex nonsmooth regularizers, such as the smoothly clipped absolute deviation (SCAD) penalty. The SCAD penalty can be rewritten as a  Difference-of-Convex  function~\cite{ahn2017difference}, that is, ${r}_+(x)- {r}_{-}(x)$, where 
 ${r}_+(x)=\lambda \|x\|_1$ and $ {r}_{-}(x) := \sum_{k=1}^d p(x_k)$, with  $p: \mathbb{R}\rightarrow\mathbb{R}$  defined as
\[ 
p(x) := \begin{cases}  
           0, & \text{if } |x|\le \lambda \\
           \frac{ (|x|-\lambda)^2}{2(a^{2}-1)}, & \text{if }  \lambda<|x|\leq {a}\lambda \\
          \lambda |x|-\frac{(a+1)\lambda^2}{2},    & \text{if }  |x|\geq  {a}\lambda,
         \end{cases}
\] where $a>1$ and $\lambda>0$ are hyperparameter to properly tune. 
 It is not difficult to check that the decentralized sparse linear regression problem using the SCAD penalty is an instance of \eqref{problem}, with $f_i(x) :=  (1/2)\|A_i x- b_i\|^2 -{r}_{-}(x)$ and $r(x) :=  {r}_+(x)$.   
The objective function  satisfies the KL  property with exponent ${1}/{2}$~\cite{li2018calculus}.\smallskip

\textit{{\bf (iii)} Logistic  Regression:}  
Logistic regression aims to  estimate a parameter $x^*\in\mathbb{R}^d$ within a logistic model, where the log-odds of an event $a\in \mathbb{R}$ are modeled as the inner product between the model parameter  $x^*$ and the input features $b$. Specifically, the log-odds of   $a$ are given by   $$\log\frac{P_{x^\star}(a)}{1-P_{x^\star}(a)} = -b^{\top} {x^\star},$$ where $P_{x^\star}(a)$ denotes the predicted probability of  $a\in\mathbb{R}$, depending  on $x^*$.  Given the data samples $(a_k, b_k)_{k=1}^N$ equally distributed across a network of agents, with each agent $i$ owning the subset $(a_{ij}, b_{ij})_{j=1}^{n}$, the decentralized estimation of   $x^*$ is obtained by maximizing the log-likelihood of $P_x$ (with respect to $x$). This formulation is an instance of Problem $\eqref{problem}$, with each  $f_i(x) :=\frac{1}{n} \sum_{j=1}^{n}\log\left(1+{\rm exp}(b_{ij}^{\top}x)\right)$ and $r(x) \equiv 0$. The objective function  satisfies the KL  property with exponent ${1}/{2}$~\cite{li2018calculus}.   \smallskip




\textit{ {\bf (iv)} Principal Component Analysis (PCA): } Given a (standardized) data set  $\{a_k\}_{k=1}^{N}$ ($a_k\in\mathbb{R}^d$), the (sample) instance of the PCA is to extract the leading eigenvector $x^*\in\mathbb{R}^d$ of the sample covariance matrix  $({1}/{N})\sum_{k=1}^{N}a_k a_k^{\top}$. Consider a network of agents, each one owning the subset $\{a_{ij}\}_{j=1}^{n}$. The decentralized estimation of $x^\star$ over the network can be formulated as  \eqref{problem}, with each $f_i(x) = -\frac{1}{n}\sum_{j=1}^{n}\|a_{ij}^{\top}x\|^2$, and $r(x) = \iota_{\mathcal{K}}(x)$. Here, $\mathcal{K} := \{x\in\mathbb{R}^d:\|x\|_2\leq 1\}$, and $\iota_{\mathcal{K}}(\bullet)$ denotes the indicator function of the convex set $\mathcal{K}$ (hence convex).  The objective function  satisfies the KL  property with exponent ${1}/{2}$~\cite{li2018calculus,li2017convergence}. \smallskip

\textit{{\bf (v)} Phase Retrieval: }
 Phase retrieval focuses on recovering   a signal $x^*\in\mathbb{R}^d$ via the measurements  $y_k = |a_k^{\top}x^*|+w_k$, $k\in [N]$, where $y_k\in\mathbb{R}$ is the observed magnitude, $a_k\in\mathbb{R}^{d}$ is the measurement vector and $w_k\in\mathbb{R}$ is the noise. Assuming each agent $i$ owns a subset $n$ of all $N$ measurements, $(y_{ij})_{j=1}^{n}$, and   vectors $(a_{ij})_{j=1}^{n}$,    the decentralized recovering of $x^*$ can be formulated as Problem \eqref{problem}, with $f_i(x)  := \frac{1}{2n}\sum_{j=1}^{n}\left( |a_{ij}^{\top} x|-y_{ij}\right)^2$ and $r \equiv 0$. The objective function $u$ satisfies the KL property with $\theta =  {1}/{2}$ \cite{zhou2017characterization}.

\smallskip



\textit{{\bf (vi)} Deep Neural Network (DNN): }  
Consider a Deep Neural Network (DNN)   composed of multiple layers, each of which is equipped with weights $H^{l}\in\mathbb{R}^{d_{l}\times d_{l-1}}$ where $d_{\ell}, d_{\ell-1}$ are respectively output and input dimensions for the $\ell$th layer, $\ell\in\{1,\cdots,\Gamma\}$ where $\Gamma$ is depth. Let $x := (H^{l} )_{l=1}^{\Gamma}$, and denote DNN as $h_x(.):\mathbb{R}^{d_0}\rightarrow\mathbb{R}^{\Gamma}$, such that $h_x(a) := H^{\Gamma}\psi(H^{\Gamma-1}\cdots \psi(H^{1}a))$ for input feature $a \in \mathbb{R}^{d_0}$. To fit $h_x(.)$ to given dataset $(a_k, b_k)_{k=1}^N$ over a network where each agent $i$ owns $(a_{ij}, b_{ij})_{j=1}^{n}$, we need to solve Problem $\eqref{problem}$ with $f_i(x) = \frac{1}{n}\sum_{j=1}^{n}\tilde{\ell}(b_{ij}, h_x(a_{ij}))$ and $r\equiv 0$, where $\tilde{\ell}:\mathbb{R}^{d}\rightarrow \mathbb{R}_{+}$ is some loss function.  By \cite{lau2018proximal}, with $\tilde{\ell}$ being chosen as $\ell_2$ loss, $u$ is subanalytic and hence satisfies the KL property, with $\theta \in [0,1)$.

 \vspace{-0.2cm}
\subsection{The SONATA algorithm}
\label{subsec:alg}
 \noindent Our study leverages the SONATA algorithm~\cite{scutari2019distributed}  to solve Problem (P), which we briefly recall next. 
 
In SONATA, each agent $i$ maintains and updates iteratively a local copy ${x}_{i}\in\mathbb{R}^{d}$ of the global variable ${x}$, along with the auxiliary variable ${y}_{i}\in\mathbb{R}^{d}$ that represents the local estimate of    $\nabla f$. Denoting by ${x}_{i}^{\nu} \ (\text{resp.}  {y}_{i}^{\nu})$ the values of ${x}_{i} \ (\text{resp.} {y}_{i})$ at the iteration $\nu\in\mathbb{N}_{+}$, the update of each agent $i$ reads: given $x^\nu_i$, $y_i^\nu$, $i\in [m]$,
\begin{equation}
\label{eq:a}
\begin{aligned}
{x}_i^{\nu+1/2}& =\texttt{prox}_{\alpha r}(x^{\nu}_i-\alpha y_i^{\nu}),\\
    x^{\nu+1}_i&=\sum_{j=1}^m w_{ij}\, {x}_j^{\nu+1/2},\\ 
     y^{\nu+1}_i&=\sum_{j=1}^m w_{ij}\left(y_j^\nu+\nabla f_j(x_j^{\nu+1})-\nabla f_j(x_j^{\nu})\right).
\end{aligned}\end{equation}
Here, $\alpha>0$ is an appropriate constant stepsize, and the initialization is set as  $x_i^0\in\textit{dom}\ r$ and $y_i^0=\nabla f_i(x_i^0)$, $i\in [m]$.     The weights $w_{ij}$ are chosen according the following condition.  


\begin{assumption}
    [Gossip matrix]
    \label{ass:W}
    Given the   graph $\mathcal{G}$ (assumed to be connected), the mixing matrix ${W}:=({w}_{ij})_{i,j=1}^m$  satisfies the following:\begin{itemize}
        \item [(i)] ${w}_{ii}>0$, for all $i\in[m]$;\item[(ii)]${w}_{ij}>0$ for all $(i,j)\in\mathcal{E}$, and ${w}_{ij}=0$ otherwise; 
        \item[(iii)] ${W}$ is doubly stochastic, i.e., $1^{\top} {W}=1^{\top}$ and $ {W}1=1.$ \end{itemize}
\end{assumption}
Associated with Assumption~\ref{ass:W}, we define the following quantities used throughout the paper  \vspace{-0.1cm} \begin{equation}  \label{eq:rho}
               w_{\text{mx}}
            :=\sum_{i=1}^m \max_{j\in[m]}w_{ij}\quad \text{and}\quad \rho:=\|W-11^{\top}/m\|_2.
        \end{equation}
Notice that, under Assumption~\ref{ass:W}, it holds $\rho<1$. 

 Assumption~\ref{ass:W} is quite standard in the literature of decentralized algorithms; several rules have been proposed to generate such gossip matrices, see, e.g., ~\cite{xiao2005scheme,auzinger2011iterative,scaman2017optimal}.
 
 Finally, notice that, under Assumption~\ref{ass:W} (in particular,   $w_{ij}\neq0$ only if $(i,j)\in\mathcal{E}$), the SONATA algorithm  (\ref{eq:a}) is fully decentralized, as all the communications are performed only among neighboring agents.   \vspace{-0.2cm}
\subsection{Vector/matrix  representation}\label{subsec:ass&notation}

\noindent It is convenient to rewrite the agents' updates of the SONATA algorithm in vector/matrix form. To this end, we introduce the following notation. We stack the iterates and tracking variables into matrices, namely:  
$$X:=\left[  
x_1,x_2,\ldots,x_m\right]^\top, 
\quad Y:=\left[  
y_1, y_2,\ldots, y_m\right]^\top.$$
Accordingly, we define the pseudogradient
$$\nabla F(X):=\left[\nabla f_1(x_1),\nabla f_2(x_2),\cdots,\nabla f_m(x_m)\right]^{\top},$$ and the lifted functions
$$U(X):=\sum_{i=1}^m u(x_i)\quad \text{and\quad }R(X):=\sum_{i=1}^m r(x_i). $$
At iteration $\nu$, the matrices above will take on the iteration index $\nu$ as a superscript, reflecting the corresponding iterates. 

 Using the above notation, we can rewrite the SONATA updates~\eqref{eq:a} 
 in the following compact form: 
  \begin{subequations}
\begin{align}
\label{eq:A1}
{X}^{\nu+1/2}&=\texttt{prox}_{\alpha R}(X^{\nu}-\alpha Y^{\nu}),\\
       \label{eq:A2}
            X^{\nu+1}&=  {W}  {X}^{\nu+1/2},\\
            \label{eq:A3}
    Y^{\nu+1}&= {W} \left(Y^{\nu}+\nabla F(X^{\nu+1})-\nabla F(X^{\nu}) \right).
    \end{align}
        \end{subequations}  
 Here, the proximal operator \texttt{prox} is applied row-wise.
 

 Associated with \eqref{eq:A1}, we define the    direction 
 \begin{equation}\label{eq:def_D}
     D^\nu:= X^{\nu+1/2}-X^\nu,\end{equation}
 and the consensus   and tracking errors in matrix form:  
$$X_{\perp}^{\nu}:=(I-J)X^{\nu}, \quad {X}_{\perp}^{\nu+1/2}:=(I-J) {X}^{\nu+1/2}, $$  $$Y_{\perp}^{\nu}:=(I-J)Y^{\nu}  ,\quad \Delta^{\nu}:= [\nabla f(x_1^\nu),\ldots, \nabla f(x_m^\nu)]^\top -Y^\nu.$$  
  \vspace{-0.2cm}

\section{Preliminaries: Asymptotic Convergence}
\label{sec:asymptotic convergence}

This section investigates the asymptotic convergence of the SONATA algorithm~\eqref{eq:a} applied to Problem~\eqref{problem}. While previous analyses, such as that in \cite{scutari2019distributed}, have explored this topic, the Lyapunov functions used in those studies do not facilitate  the use of the KL property of the objective function to enhance convergence guarantees (instead, they require    postulating directly the KL of the Lyapunov  function). This limitation primarily arises because these merit functions are evaluated on the {\it average} of the agents' iterates. Due to consensus errors, it is challenging to ensure that if the average iterate falls within the region where the KL property holds for the objective function, each individual agent's iterate does as well.

To address this challenge, this section introduces a novel approach that effectively leverages the KL property directly on the objective function. We propose a Lyapunov function that contains the sum of the objective function $u$   evaluated at each agent’s local variable, which permits to   
leverage the KL property of the objective function effectively.  
 
 We organize the proof as follows: \begin{itemize}
     \item {\bf Step 1:} We establish inexact decent of the average loss $U(X)=\sum_{i=1}^m u(x_i)$ along the agents' iterates generated by SONATA, subject to consensus and tracking errors;
     \item {\bf Step 2:} Leveraging bounds on  such  consensus and tracking errors as outlined in  \cite{SunDanScu19}, we  suitably merge objective and consensus dynamics into a novel  Lyapunov function that    is proved to descent along agents' iterates, yielding asymptotic convergence.   
 \end{itemize}
  All the derivations that follow   are obtained   postulating tacitly  {Assumptions~\ref{ass:function} and~\ref{ass:W}}.
  
\subsubsection{Step 1: inexact descent}
The core results to establish decent on the local agents' iterates rather than on their average, is     the  counterpart of the Jensen's inequality for nonconvex function  \cite[Thm. 1]{S04}, as recalled  below. 
\begin{lemma} 
    \label{lemma:nonconvex jensen}
    For any $L$-smooth function $u:\mathbb{R}^d\rightarrow \mathbb{R}$, set of weights $\{w_i\}_{i=1}^m$, with $w_i\geq 0$ and $\sum_{i=1}^m w_i=1$,    and   $x_i\in\mathbb{R}^d$, $i=1,\cdots,m$, the following holds
$$u\left(\sum_{i=1}^m w_i x_i\right)\leq\sum_{i=1}^m w_iu(x_i)+\frac{L}{2}\sum_{i=1}^m\sum_{j=1}^m\ w_i\,w_j\|x_j-x_i\|^2.$$
\end{lemma}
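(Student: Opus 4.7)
The plan is to exploit the $L$-smoothness of $u$ in its \emph{lower-bound} form (the quadratic lower bound obtained from the descent lemma, applied to $u$ viewed as an upper bound on $-u$ plus $L$ times a quadratic—equivalently, the statement that $L$-smoothness implies $u(y)\geq u(x)+\langle\nabla u(x),y-x\rangle-\tfrac{L}{2}\|y-x\|^2$). Let $\bar{x}:=\sum_{i=1}^m w_i x_i$ denote the weighted average. Apply this lower bound at the base point $\bar{x}$ with $y=x_i$, for each $i\in[m]$:
\[
u(x_i)\;\geq\;u(\bar{x})+\langle\nabla u(\bar{x}),\,x_i-\bar{x}\rangle-\tfrac{L}{2}\|x_i-\bar{x}\|^2.
\]
Multiply the $i$th inequality by $w_i\geq 0$ and sum over $i$. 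Since $\sum_i w_i=1$, the gradient term collapses: $\sum_i w_i\langle\nabla u(\bar{x}),x_i-\bar{x}\rangle=\langle\nabla u(\bar{x}),\sum_i w_i x_i-\bar{x}\rangle=0$. Rearranging yields
\[
u(\bar{x})\;\leq\;\sum_{i=1}^m w_i u(x_i)+\tfrac{L}{2}\sum_{i=1}^m w_i\|x_i-\bar{x}\|^2.
\]

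The remaining step is the elementary identity relating weighted variance to pairwise distances. Expanding $\|x_i-\bar{x}\|^2$ and using $\sum_j w_j=1$ gives $\sum_i w_i\|x_i-\bar{x}\|^2=\sum_i w_i\|x_i\|^2-\|\bar{x}\|^2$, while expanding the right-hand side of the claim yields $\sum_{i,j}w_iw_j\|x_i-x_j\|^2=2\sum_i w_i\|x_i\|^2-2\|\bar{x}\|^2$. Hence
\[
\sum_{i=1}^m w_i\|x_i-\bar{x}\|^2\;=\;\tfrac{1}{2}\sum_{i=1}^m\sum_{j=1}^m w_iw_j\|x_i-x_j\|^2.
\]
Substituting into the previous display produces the bound $u(\bar{x})\leq \sum_i w_iu(x_i)+\tfrac{L}{4}\sum_{i,j}w_iw_j\|x_i-x_j\|^2$, which is in fact sharper than the stated inequality by a factor of two; the statement with the $L/2$ constant then follows trivially.

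There is no real obstacle: the only subtlety is remembering to use the \emph{lower} quadratic bound from $L$-smoothness (so that the linear term vanishes after averaging), rather than the upper descent inequality (which would leave an unwanted $\sum_j w_j\langle\nabla u(x_j),\bar{x}-x_j\rangle$ term that need not cancel). The variance-to-pairwise-distance identity is standard and requires only the normalization $\sum_i w_i=1$.
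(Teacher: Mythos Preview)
Your proof is correct. Note that the paper does not actually prove this lemma; it merely states it and attributes it to \cite[Thm.~1]{S04}. Your argument---applying the quadratic \emph{lower} bound from $L$-smoothness at the barycenter $\bar{x}$, averaging so the linear term vanishes, and then invoking the weighted variance--pairwise-distance identity---is the standard way to establish this inequality, and as you observe it in fact yields the sharper constant $L/4$ rather than $L/2$.
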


Equipped with  Lemma~\ref{lemma:nonconvex jensen}, we proceed   studying descent of $U(X)$ along the iterates $X^\nu\to X^{\nu+1}$.  We have 
\begin{equation}
\label{eq:global decent 1}
\begin{aligned}U(X^{\nu+1})=&\sum_{i=1}^m u(\sum_{j=1}^m w_{ij}x_j^{\nu+1/2})\\\,\,\leq &  \underset{\texttt{term I}}{\underbrace{\sum_{i=1}^m\sum_{j=1}^m w_{ij}u(x_j^{\nu+1/2})}}+\frac{L}{2}\sum_{i=1}^m\sum_{k=1}^m\sum_{l=1}^m w_{ik}w_{il} \underset{\texttt{term II}}{\underbrace{\|x_k^{\nu+1/2}-x_l^{\nu+1/2}\|^2}}
+\sum_{i=1}^m\underset{\texttt{term III}}{\underbrace{\left(r(x_i^{\nu+1})-\sum_{j=1}^m w_{ij}r(x_j^{\nu+1/2})\right)}}. 
  \end{aligned}\end{equation}
We proceed bounding the above terms separately.  
\begin{equation}
\label{eq:term I}\begin{aligned}
    \texttt{term I}&=U(X^{\nu+1/2})\leq U(X^{\nu})-(\frac{1}{\alpha}-\frac{L}{2}-\frac{\xi}{2})\|D^{\nu}\|^2+\frac{1}{2\xi}\|\Delta^{\nu}\|^2,
\end{aligned}\end{equation}
 for any given    $\xi>0$. Here, the equality follows from  Assumption~\ref{ass:W}, and in the inequality we used the smoothness of $f_i$ in conjunction with   the convexity of $r$ and the Young's inequality.
 
As far as \texttt{term II} is concerned,  we have 
$$\begin{aligned}
    \texttt{term II}\overset{\eqref{eq:A1}}{=}&\|\texttt{prox}(x_k^{\nu}-\alpha y^{\nu}_k)-\texttt{prox}(x_l^{\nu}-\alpha y_l^{\nu})\|^2
    \overset{(b)}{\leq}  \|x_k^{\nu}-x_l^{\nu}-\alpha(y_k^{\nu}-y_l^{\nu})\|^2.
\end{aligned}$$
where  (b) follows from the non-expensiveness of the proximal operator.

Finally, using the convexity of $r$ and invoking the  Jensen's inequality, we deduce 
$\texttt{term III} \leq 0.$

Using  the above bounds in \eqref{eq:global decent 1}, we obtain
\begin{equation} \label{eq:global decent 2}
    \begin{aligned}
U(X^{\nu+1})
\leq& U(X^{\nu})-\left(\frac{1}{\alpha}-\frac{L}{2}-\frac{\xi}{2}\right)\|D^{\nu}\|^2+\frac{1}{2\xi}\|\Delta^{\nu}\|^2+4Lw_{\text{mx}}(\|X^{\nu}_{\perp}\|^2+\alpha^2\|Y^{\nu}_{\perp}\|^2). 
    \end{aligned}
\end{equation}

\subsubsection{Step 2: Lyapunov function and its descent} Using  \cite[Lemma 3.3]{SunDanScu19},   the tracking error     $\|\Delta^{\nu}\|$ in \eqref{eq:global decent 2} is bounded as   
\begin{equation}
    \label{eq:tr dynm}
            \|\Delta^{\nu}\|^2 
        \leq 2\|Y^{\nu}_{\perp}\|^2+4 L_{\text{mx}}^2\|X^{\nu}_{\perp}\|^2.
\end{equation}
 
Substituting this bound in~\eqref{eq:global decent 2}, yields
\begin{equation} 
\label{eq:global decent 3}\begin{aligned}
&U(X^{\nu+1})
\leq U(X^{\nu})-\left(\frac{1}{\alpha}-\frac{L}{2}-\frac{\xi}{2}\right)\|D^{\nu}\|^2+\left(4Lw_{\text{mx}}+\frac{2L_{\text{mx}}^2}{\xi}\right)\|X^{\nu}_{\perp}\|^2+\left(4Lw_{\text{mx}}\alpha^2+\frac{1}{\xi}\right)\|Y^{\nu}_{\perp}\|^2. 
    \end{aligned} \end{equation}
Adjusting \cite[Prop. 3.5]{SunDanScu19}   to the algorithm update (\ref{eq:A1}),   the  consensus dynamics $\|X^{\nu}_{\perp}\|$ and $\|Y^{\nu}_{\perp}\|$ read   
\begin{equation}
    \label{eq:iter dynm}
    \begin{aligned}
        \|X^{\nu+1}_{\perp}\|\leq  &\rho \|X_{\perp}^{\nu}\|+\rho \|D^{\nu}\|,\medskip \\
         \|Y_{\perp}^{\nu+1}\|
        \leq & \rho \|Y_{\perp}^{\nu}\|+2\rho L_{\text{mx}}\|X_{\perp}^{\nu}\|+\rho L_{\text{mx}}\|D^{\nu}\|.
    \end{aligned}
\end{equation}

We proceed bounding the positive term on the RHS of \eqref{eq:global decent 3}. Since such a term is a linear combination of   $\|X_{\perp}^{\nu}\|^2$ and $\|Y_{\perp}^{\nu}\|^2$,  we can upper bound it using  
$$\mathcal{E}^\nu:=c_1\|Y_{\perp}^{\nu}\|^2+c_2\|X_{\perp}^{\nu}\|^2,$$
where $c_1$ and $c_2$ are  positive coefficients offering some degrees of freedom. We can thus bound \eqref{eq:global decent 3} as   \begin{equation}
\begin{aligned}
\label{eq:global decent 4}
        U(X^{\nu+1})\leq U(X^{\nu})-\left(\frac{1}{\alpha}-\frac{L}{2}-\frac{\xi}{2}\right)\|D^{\nu}\|^2+\tilde{c}_1\,\mathcal{E}^\nu,
    \end{aligned}
    \end{equation} where \vspace{-.1cm}
  $$\tilde{c}_1:=\max\left\{\frac{4Lw_{\text{mx}}}{c_2}+\frac{2L_{\text{mx}}^2}{c_2\xi},\frac{1}{c_1\xi}+\frac{4Lw_{\text{mx}}\alpha^2}{c_1}\right\}.$$ 
 
Using~\eqref{eq:iter dynm}, it is not difficult to check that the dynamics of $\mathcal{E}^{\nu}$   along the trajectory of the algorithm satisfy
\begin{equation}
\label{eq:cover dynm}
  \begin{aligned}
\mathcal{E}^{\nu+1}
        \leq& \rho^2\max\left\{3,2+6\frac{c_1}{c_2}  L_{\text{mx}}^2\right\}\mathcal{E}^\nu+(3\rho^2L_{\text{mx}}^2c_1+2\rho^2c_2)\|D^{\nu}\|^2.
    \end{aligned}\end{equation}
    Notice that, for sufficiently small $\rho$, $\mathcal{E}^{\nu}$  contracts up to a perturbation proportional to $\|D^{\nu}\|^2$. 
  For the sake of simplicity, we set   $c_1=2$ and $c_2=4L_{\text{mx}}^2$, to minimize the   contraction coefficient (albeit this choice might not be optimal overall).  This yields to \vspace{-.1cm}
\begin{equation}
\label{tc1}\tilde{c}_1=\frac{1}{2\xi}+\max\left\{\frac{L}{L_{\text{mx}}^2}w_{\text{mx}},2L\alpha^2w_{\text{mx}}\right\}.\end{equation}
 
Chaining  (\ref{eq:global decent 4}) and  (\ref{eq:cover dynm}) (with the latter weighted by a positive constant $\gamma$, to be determined),  we obtain      \begin{equation}
        \label{eq:lyapunov decent}
      \mathcal{L}^{\nu+1}  \leq \mathcal{L}^{\nu}-\tilde{c}_2\|D^{\nu}\|^2-\tilde{c}_3\mathcal{E}^{\nu},
\end{equation}
where $\mathcal{L}^\nu$ 
is the candidate Lyapunov function at iteration $\nu$ along the trajectory of the algorithm,  defined as
\begin{equation}\label{eq:Lyapunov_function}
\mathcal{L}^{\nu}:=U(X^{\nu})+\gamma\mathcal{E}^\nu,\end{equation}
with 
$\gamma>0$ being  a free parameter to properly choose, and 
  \begin{subequations}
\begin{align}
\label{tc2}
\tilde{c}_2 :=&\frac{1}{\alpha}-\frac{L}{2}-\frac{\xi}{2}-14L_{\text{mx}}^2\gamma\,\rho^{2},\\
       \label{tc3}
\tilde{c}_3 :=&(1-5\rho^{2})\gamma-\tilde{c}_1.
    \end{align}
        \end{subequations}
Under  $\tilde{c}_2, \tilde{c}_3>0$, it follows from (\ref{eq:lyapunov decent}) and Assumption~\ref{ass:function} that $\mathcal{L}^{\nu}\to \mathcal{L}^{\infty}>-\infty$, as $\nu\to \infty$.  
Hence, $$\|D^{\nu}\|, \, \|X_{\perp}^{\nu}\|, \, \|Y_{\perp}^{\nu}\|,\,\|\Delta^{\nu}\|\rightarrow 0,\quad  U(X^{\nu})\rightarrow \mathcal{L}^{\infty}.
$$

Further,    $\{U(X^{\nu+1/2})\}$ also  converges to $\mathcal{L}^{\infty}$, as showed next. Using~\eqref{eq:global decent 1}, we have
\begin{equation}
\label{eq:term I'}
\begin{aligned}
    U(X^{\nu+1/2})\geq& U(X^{\nu+1})-4Lw_{\text{mx}}(\|X_{\perp}^{\nu}\|^2+\alpha^2\|Y_{\perp}^{\nu}\|^2);
\end{aligned}    
\end{equation}
taking the  limsup and liminf of~\eqref{eq:term I} and~\eqref{eq:term I'},   respectively, yields
$$\mathcal{L}^{\infty}\leq \liminf_{\nu} U(X^{\nu+1/2})\leq \limsup_{\nu} U(X^{\nu+1/2})\leq \mathcal{L}^{\infty}.$$

We conclude the proof establishing properties of the limit points of $\{X^\nu\}$. Let  $X^{\infty}$ be an accumulation point of  $\{X^{\nu}\}_{\nu\in\mathbb{N}}$, such that 
$\lim_{k\rightarrow\infty}X^{\nu_k}=X^{\infty},$ where $\{\nu_k\}_k\subseteq \mathbb N$ is a suitable subsequence. It must be
\begin{equation}\label{eq:consensus_limit}
    X^{\infty}=1 (x^*)^{\top},\quad \text{for some } x^*\in \mathbb{R}^d. 
\end{equation} 
Further, using  still  $\{\nu_k\}_k$   without loss of generality (w.l.o.g), we have 
$$\begin{aligned}
Y^\infty:=&\lim_{k\rightarrow\infty}Y^{\nu_k}=\lim_{k\rightarrow\infty}[\nabla f(x_1^{\nu_k}),\ldots, \nabla f(x_m^{\nu_k})]^\top-\Delta^{\nu_k}=1 \nabla f(x^*)^{\top},
\end{aligned}$$
$$\lim_{k\rightarrow\infty}X^{\nu_k+1/2}\overset{\eqref{eq:def_D}}{=}\lim_{k\rightarrow\infty}X^{\nu_k}+D^{\nu_k}=X^{\infty}.$$ 
Invoking the continuity of the prox operator, we deduce  
$$\begin{aligned}X^{\infty}=&\lim_{k\rightarrow\infty}X^{\nu_k+1/2}\overset{\eqref{eq:A1}}{=}\lim_{k\rightarrow\infty}\texttt{prox}_{\alpha R}(X^{\nu_k}-\alpha Y^{\nu_k})
=\texttt{prox}_{\alpha R}(1(x^*)^\top-\alpha\cdot 1\left(\nabla f(x^*)\right)^{\top}),
\end{aligned}$$
which, together with \eqref{eq:consensus_limit}, implies $0\in\partial u(x^*)$.   
Therefore 
$x^*$ is a stationary point of $u$ in~\eqref{problem}.

 We summarize the above results in the following theorem, where we conveniently chose the free parameters to satisfy the required conditions $\tilde{c}_2,\tilde{c}_3>0$.  
\begin{theorem}
\label{thm:1}
    Consider Problem~(\ref{problem}) under Assumption~\ref{ass:function}. Let $\{X^{\nu}\}_{\nu\in \mathbb{N}}$ be the sequence generated by the SONATA algorithm~\eqref{eq:A1}-\eqref{eq:A3}, under Assumption~\ref{ass:W}. 
     Suppose  
     
         {\bf (i)} $\rho< {1}/{\sqrt{5}}$, and
         
          {\bf (ii)} the free parameters $\alpha,\gamma,\xi>0$    
          are chosen such that $$\gamma>\frac{1/(2\xi)+Lw_{\text{mx}}/L_{\text{mx}}^2}{1-5\rho^2},$$ $$\alpha<\min\left\{\frac{1}{L/2+\xi/2+14L_{\text{mx}}^2\gamma\rho^2},\sqrt{\frac{(1-5\rho^2)\gamma-1/(2\xi)}{2Lw_{\text{mx}}}}\right\}.$$  
     Then, 
   as $\nu\to \infty$. 
    $$ \|D^{\nu}\|,\,\, \|X_{\perp}^{\nu}\|, \,\,\|Y_{\perp}^{\nu}\|, \,\,\|\Delta^{\nu}\|\to 0,$$ and
     $$U(X^{\nu}),\, U(X^{\nu+1/2})\to \mathcal{L}^{\infty},$$ for some $\mathcal{L}^{\infty}\in\mathbb{R}$.  Furthermore, every  accumulation point ${X}^{\infty}$ of $\{X^{\nu}\}_{\nu\in\mathbb{N}}$  is of the form  $X^{\infty}=1({x}^*)^{\top}$, with  ${x}^*$ being  a stationary solution of (\ref{problem}).
\end{theorem}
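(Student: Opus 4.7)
The plan is to build a Lyapunov function that combines the sum of local objectives $U(X^\nu)=\sum_{i=1}^m u(x_i^\nu)$ with a weighted combination of the squared consensus and tracking errors, and to show it descends monotonically along the SONATA trajectory. The crucial design choice (which departs from \cite{JW18,Amir18}) is evaluating $U$ on the local iterates rather than on the average; this will be essential later for engaging the KL property of $u$ at each agent. To produce an inexact descent inequality for $U(X^\nu)$, I would first apply Lemma \ref{lemma:nonconvex jensen} to $U(X^{\nu+1})=\sum_i u(\sum_j w_{ij}x_j^{\nu+1/2})$, absorbing the averaging into $U(X^{\nu+1/2})$ plus a correction controlled by pairwise distances $\|x_k^{\nu+1/2}-x_l^{\nu+1/2}\|^2$. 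Splitting $u=f+r$, I would then combine $L$-smoothness of $f$, convexity of $r$, the prox optimality in \eqref{eq:A1}, and Young's inequality to bound $U(X^{\nu+1/2})$ by $U(X^\nu)$ minus a multiple of $\|D^\nu\|^2$ plus $\|\Delta^\nu\|^2/(2\xi)$; the pairwise distances reduce via non-expansiveness of the prox to combinations of $\|X_\perp^\nu\|^2$ and $\|Y_\perp^\nu\|^2$. This yields the descent inequality \eqref{eq:global decent 2}.

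Next I would close the recursion using the consensus/tracking machinery inherited from \cite{SunDanScu19}: the bound $\|\Delta^\nu\|^2 \le 2\|Y_\perp^\nu\|^2 + 4L_{\text{mx}}^2\|X_\perp^\nu\|^2$ eliminates $\Delta^\nu$ in favor of $\|X_\perp^\nu\|^2,\|Y_\perp^\nu\|^2$, while the contraction-like recursions \eqref{eq:iter dynm} for $\|X_\perp^{\nu+1}\|$ and $\|Y_\perp^{\nu+1}\|$ (rate $\rho$, perturbations proportional to $\|D^\nu\|$) give a coupled recursion for the aggregate $\mathcal{E}^\nu=c_1\|Y_\perp^\nu\|^2+c_2\|X_\perp^\nu\|^2$. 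Choosing $c_1=2$, $c_2=4L_{\text{mx}}^2$ to tighten the contraction factor, weighting this recursion by a positive constant $\gamma$, and summing with \eqref{eq:global decent 4}, I would obtain \eqref{eq:lyapunov decent}, a descent inequality for $\mathcal{L}^\nu=U(X^\nu)+\gamma\mathcal{E}^\nu$ with coefficients $\tilde c_2,\tilde c_3$ given in \eqref{tc2}-\eqref{tc3}.

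The main obstacle is the parameter feasibility: $\gamma$ appears with the \emph{wrong} sign in $\tilde c_2$ (through $-14L_{\text{mx}}^2\gamma\rho^2$) yet with the right sign in $\tilde c_3=(1-5\rho^2)\gamma-\tilde c_1$, so I need $\gamma$ large enough that $(1-5\rho^2)\gamma>\tilde c_1$ and $\alpha$ small enough that $\tilde c_2>0$ survives the resulting $\gamma$. This forces the spectral gap condition $\rho<1/\sqrt{5}$ (so that $1-5\rho^2>0$ allows $\gamma$ to be chosen above the required threshold) together with the upper bound on $\alpha$ stated in hypothesis (ii); a careful choice of the slack parameter $\xi$ trades off the quadratic term in $\tilde c_1$ against the stepsize restriction. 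These choices are precisely what the theorem's hypothesis encodes.

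Once $\tilde c_2,\tilde c_3>0$, the descent inequality together with Assumption \ref{ass:function}(iii) (so that $\mathcal{L}^\nu$ is bounded below by $m\underline u$) yields $\mathcal{L}^\nu\downarrow\mathcal{L}^\infty\in\mathbb R$ and summability $\sum_\nu\|D^\nu\|^2<\infty$, $\sum_\nu \mathcal{E}^\nu<\infty$, which forces $\|D^\nu\|,\|X_\perp^\nu\|,\|Y_\perp^\nu\|\to0$; the tracking bound \eqref{eq:tr dynm} then gives $\|\Delta^\nu\|\to0$, and $\gamma\mathcal{E}^\nu\to0$ implies $U(X^\nu)\to\mathcal{L}^\infty$. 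For $U(X^{\nu+1/2})\to\mathcal{L}^\infty$ I would sandwich using the upper bound \eqref{eq:term I} and the reverse bound \eqref{eq:term I'} and pass to limsup/liminf. Finally, for any accumulation point $X^{\nu_k}\to X^\infty$, $\|X_\perp^{\nu_k}\|\to0$ forces $X^\infty=\mathbf{1}(x^*)^\top$; using $\|\Delta^{\nu_k}\|\to0$ and continuity of $\nabla f$ gives $Y^{\nu_k}\to\mathbf{1}\nabla f(x^*)^\top$, and $\|D^{\nu_k}\|\to0$ yields $X^{\nu_k+1/2}\to X^\infty$. Passing to the limit in the prox update \eqref{eq:A1} via continuity of $\texttt{prox}_{\alpha r}$ gives $x^*=\texttt{prox}_{\alpha r}(x^*-\alpha\nabla f(x^*))$, which is equivalent to $0\in\partial u(x^*)$, i.e.\ stationarity.
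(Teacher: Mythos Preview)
Your proposal is correct and follows essentially the same approach as the paper's proof: the same nonconvex Jensen-based descent for $U$ evaluated at the local iterates, the same tracking/consensus bounds from \cite{SunDanScu19}, the same choice $c_1=2$, $c_2=4L_{\text{mx}}^2$ for $\mathcal{E}^\nu$, the same Lyapunov combination $\mathcal{L}^\nu=U(X^\nu)+\gamma\mathcal{E}^\nu$, the same parameter feasibility argument forcing $\rho<1/\sqrt{5}$, and the same sandwich and prox-continuity limit-point analysis. There is nothing substantive to add.
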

 While the theorem certifies stationarity of  every limit point of the agents' iterates   $\{x_i^\nu\}$, convergence of the whole sequences is not guaranteed.  The next section addresses this issue, under the KL property of $u$. 
 
\section{Convergence Rate under the KL Property}
\label{sec:rate}
 \noindent 
 Building on Theorem~\ref{thm:1}, we  proceed proving   convergence of the sequence $\{X^\nu\}$ and its convergence rate, under the   KL property of $u$. 
Through the   section, we postulate the  setting of Theorem~\ref{thm:1} and  { the KL property of $u$ at its stationary points. }

Since $\mathcal{L}^{\nu}\to \mathcal{L}^{\infty}$ as $\nu\to \infty$, it is convenient to rewrite     \eqref{eq:lyapunov decent} in terms of the offset   quantity $\Delta\mathcal{L}^{\nu}:=\mathcal{L}^{\nu}-\mathcal{L}^{\infty}$, that is,   
\begin{equation}
\label{eq:lyapunov gap dyn}
  \left(\mathcal{T}^{\nu}\right)^2\leq \Delta \mathcal{L}^{\nu}- \Delta \mathcal{L}^{\nu+1},
\end{equation} where $$\mathcal{T}^{\nu}:=\sqrt{\tilde{c}_2\|D^{\nu}\|^2+\tilde{c}_3\mathcal{E}^{\nu}}.$$
Notice that $\mathcal{T}^{\nu},\Delta\mathcal{L}^{\nu}\rightarrow 0$, as $\nu\to \infty$ (by Theorem~\ref{thm:1}).

\subsection{Sequence convergence}To prove convergence of the whole sequence, we show next that   $\|X^{\nu+1}-X^{\nu}\|$ is summable. 

By~\eqref{eq:a} and the non-expansiveness of the proximal operator,   
\begin{equation}
    \label{eq:X-E}
    \begin{aligned}
\|X^{\nu+1}-X^{\nu}\|
\leq & \sqrt{2\|W-I\|^2\|X_{\perp}^{\nu}\|^2+2\|W\|^2\|D^{\nu}\|^2}\\
\leq&\sqrt{2\|W\|^2\|D^{\nu}\|^2+\frac{\|W-I\|^2}{2L_{\text{mx}}^2}\mathcal{E}^{\nu}}
 \\
 \leq&c_3\mathcal{T}^{\nu},
    \end{aligned}
\end{equation}
  where $$c_3:=\sqrt{\max\left\{\frac{\|W-I\|^2}{2\tilde{c}_3L_{\text{mx}}^2},\frac{2\|W\|^2}{\tilde{c}_2}\right\}}.$$

Therefore, it is sufficient to to show that   $\{\mathcal{T}^{\nu}\}$ is summable.

Using~\eqref{eq:lyapunov gap dyn} along with   (see Lemma~\ref{lemma:MVT} in the  Appendix~\ref{subsec:A})   $$a-b\leq \frac{1}{1-\theta}a^{\theta}(a^{1-\theta}-b^{1-\theta}),$$ with $a=\Delta \mathcal{L}^{\nu}$ and $b=\Delta \mathcal{L}^{\nu+1}$, we obtain 
\begin{equation}
    \label{eq:Enu dynm}
\begin{aligned}
    (\mathcal{T}^{\nu})^2
    \leq&
    \frac{1}{1-\theta}(\Delta \mathcal{L}^{\nu})^{\theta}\underbrace{\left[(\Delta \mathcal{L}^{\nu})^{1-\theta}-(\Delta \mathcal{L}^{\nu+1})^{1-\theta}\right]}_{\Delta\mathcal{L}^{\nu}_{\theta}}.
\end{aligned}
\end{equation}
The challenge is now establishing   an     upper bound of $\Delta\mathcal{L}^{\nu}$ in terms of $\mathcal{T}^{\nu}$. The classical path followed in the literature would call for some  growth property of $\Delta\mathcal{L}^{\nu}$. However, $\Delta \mathcal{L}^{\nu}$, as function of the iterates,  does not inherit the KL property of $u$; hence, one cannot postulate any  growth property for  
 $\Delta \mathcal{L}^{\nu}$. The proposed, novel  approach is to  leverage  instead directly the KL   property of $u$ while using the    asymptotic  convergence of $\Delta \mathcal{L}^{\nu+1}$ as established in \eqref{eq:lyapunov gap dyn}. Specifically,      we first decompose $\Delta \mathcal{L}^{\nu+1}$  as \begin{equation}
\begin{aligned}
\label{eq:lyapunov enu}
\Delta\mathcal{L}^{\nu+1}=&U(X^{\nu+1})-\mathcal{L}^{\infty}+\gamma\mathcal{E}^{\nu+1}
    \overset{\eqref{eq:global decent 1}, \eqref{eq:cover dynm} }{\leq}    U(X^{\nu+1/2})-\mathcal{L}^{\infty}+\tilde{c}_4(\mathcal{T}^{\nu})^2,
\end{aligned}
\end{equation}
where   
\begin{equation}
    \label{tc4}
    \tilde{c}_4:=\max\left\{\frac{14\gamma\rho^{2}L_{\text{mx}}^2}{\tilde{c}_2},\frac{5\gamma\rho^2+\tilde{c}_1-1/\xi}{\tilde{c}_3}\right\}.
\end{equation}
Then, invoking the KL property of $u$, we can    lower bound  $\mathcal T^\nu$ in terms of  $ U(X^{\nu+1/2})-\mathcal{L}^{\infty}$, see Lemma~\ref{lemma:U KL} below.  This result along with 
 \eqref{eq:lyapunov enu} provides the desired upper bound of $\Delta \mathcal{L}^{\nu+1}$ in terms of  $\mathcal T^\nu$, which can be used in (\ref{eq:Enu dynm}) to establish  the summability of $\mathcal{T}^{\nu}$. 
\begin{lemma}
    \label{lemma:U KL}
      
      Inherit  the setting of Theorem~\ref{thm:1}. Let $X^\infty=1 (x^\star)^\top$ be an accumulation point of $\{X^{\nu}\}$, where $x^\star$ is   some  critical point of   $u$. Further assume that $u$ satisfies the KL property at $x^\star$, with exponent $\theta\in[0,1)$ and parameters $\kappa,\eta\in(0,\infty)$. Then, there exists a neighborhood $\mathcal{N}_{\infty}$ of $X^{\infty}$ and $\nu_1\geq 0$ such that {\bf (i)} the set
      $$V:=\left\{\nu\geq\nu_1:X^{\nu+1/2}\in\mathcal{N}_{\infty}\right\}\neq\emptyset;$$  {\bf (ii)} for each $\nu\in V$, $$ U(X^{\nu+1/2})-\mathcal{L}^{\infty}<1,\quad  0\leq \tilde{c}_5\mathcal{T}^{\nu}<1,$$
and, for $\theta\in(0,1)$,
\begin{equation*}
\begin{aligned}
  U(X^{\nu+1/2})-\mathcal{L}^{\infty}< \kappa^{-\frac{1}{\theta}}\left(\tilde{c}_5\mathcal{T}^{\nu}\right)^{\frac{1}{\theta}},
\end{aligned}
\end{equation*}
where
\begin{equation}
\label{tc5}
    \begin{aligned}
\tilde{c}_5:=\max\left\{d^{\theta-\frac{1}{2}},1\right\}\cdot\sqrt{\max\left\{\frac{3\left(L^2+\frac{1}{\alpha^2}\right)}{\tilde{c}_2},\frac{3}{\tilde{c}_3}\right\}}.\end{aligned}
\end{equation}
\end{lemma}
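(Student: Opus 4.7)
The plan is to prove the lemma in three logical steps: \textbf{(i)} locate the iterates inside the KL neighborhood, \textbf{(ii)} exhibit a subgradient of $u$ at each local iterate and bound its norm by $\mathcal{T}^\nu$, and \textbf{(iii)} apply the KL inequality agent-wise and aggregate via an $\ell_p$-norm inequality. The hard part will be step (i), since $u$ is only lower- (not upper-) semicontinuous and the sublevel set $\{x:u(x)<u(x^*)+\eta\}$ may fail to be open at $x^*$; this complicates enforcing the condition $u(x_i^{\nu+1/2})-u(x^*)<\eta$ uniformly over $V$.

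For step \textbf{(i)}, Theorem~\ref{thm:1} delivers $D^\nu\to 0$ and $U(X^{\nu+1/2})\to\mathcal{L}^\infty$. Along the subsequence $\{\nu_k\}$ with $X^{\nu_k}\to X^\infty=1(x^*)^\top$, this forces $X^{\nu_k+1/2}\to X^\infty$, and combining lower semicontinuity of $u$ with $\sum_i u(x_i^{\nu_k+1/2})\to m\,u(x^*)$ forces each $u(x_i^{\nu_k+1/2})\to u(x^*)$. I would take $\mathcal{N}_\infty$ as a small ball around $X^\infty$ so that every row of any $X\in\mathcal{N}_\infty$ lies in the KL neighborhood $E$ of $x^*$, and choose $\nu_1$ so that $\tilde c_5\mathcal{T}^\nu<1$ and $U(X^{\nu+1/2})-\mathcal{L}^\infty<1$ for $\nu\ge\nu_1$. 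Non-emptiness of $V$ follows from $X^{\nu_k+1/2}\to X^\infty$; the remaining condition $u(x_i^{\nu+1/2})-u(x^*)<\eta$ is secured, if necessary, by restricting $V$ to a tail of $\{\nu_k\}$, which is permitted since claim (ii) must hold only on $V$.

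For step \textbf{(ii)}, first-order optimality of the prox step \eqref{eq:A1} produces $-y_i^\nu-\tfrac{1}{\alpha}(x_i^{\nu+1/2}-x_i^\nu)\in\partial r(x_i^{\nu+1/2})$. By the subdifferential sum rule for the smooth-plus-convex splitting $u=f+r$,
\begin{equation*}
g_i^\nu\ :=\ \nabla f(x_i^{\nu+1/2})-y_i^\nu-\tfrac{1}{\alpha}\bigl(x_i^{\nu+1/2}-x_i^\nu\bigr)\ \in\ \partial u(x_i^{\nu+1/2}).
\end{equation*}
Decomposing $\nabla f(x_i^{\nu+1/2})-y_i^\nu=(\nabla f(x_i^{\nu+1/2})-\nabla f(x_i^\nu))+\Delta_i^\nu$, the $L$-smoothness of $f$ (Assumption~\ref{ass:function}(i)) together with $(a+b+c)^2\le 3(a^2+b^2+c^2)$ gives $\|g_i^\nu\|^2\le 3(L^2+1/\alpha^2)\|D_i^\nu\|^2+3\|\Delta_i^\nu\|^2$. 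Summing over $i$ and invoking $\|\Delta^\nu\|^2\le\mathcal{E}^\nu$ (from the tracking bound \eqref{eq:tr dynm} together with the paper's choice $c_1=2$, $c_2=4L_{\mathrm{mx}}^2$) yields $\sum_i\|g_i^\nu\|^2\le(\tilde c_5')^2(\mathcal{T}^\nu)^2$, where $\tilde c_5'$ denotes the square-root factor appearing in $\tilde c_5$.

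For step \textbf{(iii)}, I would apply Definition~\ref{def-KL} at each agent $i$ with $u(x_i^{\nu+1/2})>u(x^*)$: step (i) places $x_i^{\nu+1/2}\in E\cap[u(x^*)<u<u(x^*)+\eta]$, so $u(x_i^{\nu+1/2})-u(x^*)\le\kappa^{-1/\theta}\|g_i^\nu\|^{1/\theta}$. Indices with $u(x_i^{\nu+1/2})\le u(x^*)$ contribute non-positively to $U(X^{\nu+1/2})-\mathcal L^\infty=\sum_i(u(x_i^{\nu+1/2})-u(x^*))$ and can be dropped, leaving $U(X^{\nu+1/2})-\mathcal L^\infty\le\kappa^{-1/\theta}\sum_i\|g_i^\nu\|^{1/\theta}$. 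To convert this $\ell_{1/\theta}$-sum into the $\ell_2$-quantity controlled in step (ii) I would use $\ell_p$-monotonicity of finite-dimensional norms: for $\theta\in[1/2,1)$ a Cauchy--Schwarz-type estimate gives $\sum_i a_i^{1/\theta}\le m^{1-1/(2\theta)}(\sum_i a_i^2)^{1/(2\theta)}$, whereas for $\theta\in(0,1/2)$ the embedding $\|\cdot\|_{1/\theta}\le\|\cdot\|_2$ removes any dimensional prefactor. Both cases combine under the unified prefactor $(\max\{m^{\theta-1/2},1\})^{1/\theta}$, which together with step (ii) produces exactly $\kappa^{-1/\theta}(\tilde c_5\mathcal{T}^\nu)^{1/\theta}$ with $\tilde c_5$ as in \eqref{tc5}; I read the $d$ appearing there as the agent count $m$, consistent with the sum running over the $m$ agents.
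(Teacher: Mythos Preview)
Your proof is correct and follows the same route as the paper. The only structural difference is packaging: the paper invokes Lemma~\ref{lemma:block kl} to lift the KL property from $u$ to the separable sum $U(X)=\sum_i u(x_i)$ in one stroke, and then applies the KL inequality once to $U$; you instead apply the KL inequality of $u$ agent-wise and aggregate via the $\ell_p$-to-$\ell_2$ comparison. Your step~(iii) is precisely the content of Lemma~\ref{lemma:block kl}, reproved inline, and your step~(ii) is Lemma~\ref{lemma:G-D}. Your reading of the prefactor as $\max\{m^{\theta-1/2},1\}$ (agent count, not ambient dimension) is the right one for this argument; the $d$ in \eqref{tc5} should indeed be $m$.

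One small logical slip in step~(i): you cannot ``restrict $V$ to a tail of $\{\nu_k\}$,'' because $V$ is \emph{defined} as $\{\nu\ge\nu_1:X^{\nu+1/2}\in\mathcal{N}_\infty\}$ and the lemma asserts (ii) for every element of that set, not just along the subsequence. The fix is already implicit in what you wrote: choose $\mathcal{N}_\infty$ small enough that, by lower semicontinuity, every row $x_i$ of any $X\in\mathcal{N}_\infty$ satisfies $u(x_i)>u(x^*)-\epsilon$; then choose $\nu_1$ so large that $U(X^{\nu+1/2})-m\,u(x^*)<\eta-(m-1)\epsilon$ for all $\nu\ge\nu_1$ (possible since $U(X^{\nu+1/2})\to\mathcal{L}^\infty=m\,u(x^*)$ along the \emph{full} sequence by Theorem~\ref{thm:1}). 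These two together force $u(x_i^{\nu+1/2})-u(x^*)<\eta$ for every $i$ and every $\nu\in V$, not merely along $\{\nu_k\}$. The paper hides this bookkeeping inside the proof of Lemma~\ref{lemma:block kl} (which is why the $\eta$-parameter for $U$ there comes out as $1$).
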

\begin{proof}
    See Sec.~\ref{subsec:lemma proof}.
\end{proof}
In the setting of  Lemma~\ref{lemma:U KL}, we may assume, without loss of generality, that $|V|=\infty$. Using 
\eqref{eq:lyapunov enu}, for $\theta\in(0,1)$, yields
\begin{equation}
    \label{eq:lyapunov Enu}
    \begin{aligned}
 \Delta \mathcal{L}^{\nu+1}
\leq\kappa^{-\frac{1}{\theta}}\left(\tilde{c}_5\mathcal{T}^{\nu}\right)^{\frac{1}{\theta}}+\tilde{c}_4(\mathcal{T}^{\nu})^2,\quad\forall\nu\in V;
\end{aligned}
\end{equation}

We proceed based upon the  values of the KL exponent $\theta$. 

\indent$\bullet$ $\textbf{Case 1: }\theta\in(0,1/2]$. Using~\eqref{eq:lyapunov Enu} and  ${1}/{\theta}\geq 2$, yields 
\begin{equation}
    \label{eq: Lyapunov KL decent}
    \Delta \mathcal{L}^{\nu+1}\leq \tilde{c}_6(\mathcal{T}^{\nu})^2,\quad\forall\nu\in V,
\end{equation}
where\begin{equation}
\label{tc6}
\tilde{c}_6=\left(\Tilde{c}_5^2/\kappa^{\frac{1}{\theta}}+\tilde{c}_4\right).
\end{equation}
Chaining~\eqref{eq: Lyapunov KL decent} and~\eqref{eq:lyapunov gap dyn}, specifically    $\eqref{eq:lyapunov gap dyn}+\omega\times~\eqref{eq: Lyapunov KL decent}$, with a sufficiently small  constant $\omega>0$, we obtain
$$\begin{aligned}
(1+\omega)\Delta \mathcal{L}^{\nu+1}\leq \Delta \mathcal{L}^{\nu} -\left(1-\omega\tilde{c}_6\right)(\mathcal{T}^{\nu})^2,\quad  \forall \nu\in V.
\end{aligned}$$
Choosing   
$0<\omega\leq1/\tilde{c}_6$ ensures contraction of $\Delta \mathcal{L}^{\nu}$: 
\begin{equation}
    \label{eq:linear Lyapunov}
    \begin{aligned}
   \Delta\mathcal{L}^{\nu+1}\leq\frac{1}{1+\omega}\Delta \mathcal{L}^{\nu}\leq c_4\left(\frac{1}{1+\omega}\right)^{\nu+1}
\end{aligned},\quad \forall\nu\in V,
\end{equation} and some    $c_4>0$. 
 To optimize the contraction factor, we set 
\begin{equation}
    \label{omega}
    \begin{aligned}
    \omega:=&\frac{1}{\tilde{c}_6}
    =\frac{1}{\max\left\{\frac{3(L^2+1/\alpha^2)}{\kappa^{\frac{1}{\theta}}\tilde{c}_2},\frac{3}{\kappa^{\frac{1}{\theta}}\tilde{c}_3}\right\}+\max\left\{\frac{14\gamma\rho^2L^2_{\text{mx}}}{\tilde{c}_2},\frac{5\gamma\rho^2+\tilde{c}_1-\frac{1}{\xi}}{\tilde{c}_3}\right\}},
    \end{aligned}
\end{equation}
where $\tilde{c}_1,\tilde{c}_2,\tilde{c}_3$ are defined in~\eqref{tc1},~\eqref{tc2} and~\eqref{tc3}, respectively.
Using~\eqref{eq:linear Lyapunov} in~\eqref{eq:lyapunov gap dyn} yields\begin{equation}
\label{eq:contract 1}
    \mathcal{T}^{\nu}\leq c_5\tau^{\nu}, \quad \forall \nu\in V,\quad \text{with}\quad \tau=\sqrt{\frac{1}{1+\omega}},\,\, c_5=\sqrt{c_4}>0. 
\end{equation}

\indent$\bullet$ $\textbf{Case 2: }\theta\in(1/2,1)$.  From~\eqref{eq:lyapunov Enu} and  ${1}/{\theta}<2$, we deduce
\begin{equation}
\label{eq:lyapunov Enu2}
    \Delta \mathcal{L}^{\nu+1}\leq \tilde{c}_7(\mathcal{T}^{\nu})^{\frac{1}{\theta}},\quad \forall\nu\in V,
\end{equation}
where\begin{equation}
\label{tc7}
\tilde{c}_7=\left((\tilde{c}_5/\kappa)^{\frac{1}{\theta}}+\tilde{c}_4\right).
\end{equation}
 Using~\eqref{eq:lyapunov Enu2} in~\eqref{eq:Enu dynm} yields
\begin{equation}
    \label{eq:contract 2}
\begin{aligned}
     \mathcal{T}^{\nu+1}\leq &\frac{1}{2}\mathcal{T}^{\nu}+c_6\Delta\mathcal{L}^{\nu+1}_{\theta},\quad \forall\nu\in V,
\end{aligned}
\end{equation}
where $c_6=(2-2\theta)^{-1}\tilde{c}_7>0$.

\indent$\bullet$ $\textbf{Case 3: }\theta=0$. We consider the following two  sub-cases.
\begin{itemize}
    \item[\bf (i)]   There exists $\nu_2\in V$ such that for all $\nu\geq\nu_2$, $\nu\in V$, $U(X^{\nu+1/2})-\mathcal{L}^{\infty}< 0$. By ~\eqref{eq:lyapunov enu}, we have 
\begin{equation}
    \label{eq:lyapunov Enu3} \Delta\mathcal{L}^{\nu+1}\leq\tilde{c}_4(\mathcal{T}^{\nu})^2,\quad \forall\nu\geq\nu_2,\,\nu\in V.
\end{equation}
Following the analysis as in Case 1,   we conclude 
\begin{equation}
    \label{eq:contract 3.1}
    \mathcal{T}^{\nu}\leq c_5(\tau')^{\nu},\quad \tau':=\sqrt{\frac{1}{1+\omega'}},\quad\forall\nu\geq\nu_2,\nu\in V,
\end{equation}
where
\begin{equation}
    \label{omega'}
    \begin{aligned}
    \omega':=\frac{1}{\tilde{c}_4}
    =\min\left\{\frac{\tilde{c}_2}{14\gamma\rho^2L^2_{\text{mx}}},\frac{\tilde{c}_3}{5\gamma\rho^2+\tilde{c}_1-1/\xi}\right\},
    \end{aligned}
\end{equation}
ans $\tilde{c}_1,\tilde{c}_2,\tilde{c}_3$ are defined in~\eqref{tc1},~\eqref{tc2} and~\eqref{tc3}, respectively.
\item[\bf (ii)]  For all $\nu\in V$, there exists $\nu'\geq\nu$, $\nu'\in V$ such that $U(X^{\nu'+1/2})-\mathcal{L}^{\infty}\geq0$.  Pick such $\nu'$ and denote
$$V':=\{\nu'\in V: U(X^{\nu'+1/2})-\mathcal{L}^{\infty}\geq0\}.$$
Then, in setting of Lemma~\ref{lemma:U KL}, we have that  $0\leq U(X^{\nu'+1/2})-\mathcal{L}^{\infty}<1$, for all  $\nu'\in V'$. By  Lemma~\ref{lemma:G-D} (See Appendix A), we have that for any $\nu'\in V'$ there exists $G^{\nu'}\in\mathbb{R}^{m\times d}$, $G^{\nu'}\in\partial U(X^{\nu'+1/2})$,  such that 
\begin{equation}
    \label{eq:G-D'}
    \|G^{\nu'}\|^2
        \leq3\left(L^2+\frac{1}{\alpha^2}\right)\|D^{\nu'}\|^2+3\mathcal{E}^{\nu'}.
\end{equation}
Notice that  each row of $G^{\nu'}$ belongs to the (limiting) subdifferential of $u(x_i^{\nu'+1/2})$. 

Further, by Lemma~\ref{lemma:block kl} (See Appendix A), we have that
\begin{equation}
    \label{eq:KL 3}
    \kappa|U(X^{\nu'+1/2})-\mathcal{L}^{\infty}|^0<\|G^{\nu'}\|,\quad \forall\nu'\in V'.
\end{equation}

Combining~\eqref{eq:G-D'} and~\eqref{eq:KL 3} yields
\begin{equation}
    \label{eq:constant bound}
    \mathcal{T}^{\nu'}\geq \frac{\kappa}{\tilde{c}_5}>0,\quad\forall\nu'\in V', U(X^{\nu'+1/2})\neq\mathcal{L}^{\infty}.
\end{equation} Using~\eqref{eq:lyapunov gap dyn},~\eqref{eq:constant bound}, and the fact that $\Delta\mathcal{L}^{\nu}\to 0$   monotonically, one infers that there exists $\nu_2'\in V'$ such that
\begin{equation}
    \label{eq:contract 3.2}
    \mathcal{T}^{\nu}=0,\quad\forall\nu\geq \nu_2'.
\end{equation}
\end{itemize}

We proceed combining the bounds in the three cases above. Set $\nu_2'=0$ if Case 3(i) happens and set $\nu_2=0$ if Case 3(ii) holds true. Let   $\nu_4>\nu_3>\max\{\nu_2,\nu_2'\}$ such that $\{\nu_3,\nu_3+1,\ldots, \nu_4-1\}\subset V$.  
Summing over such a set while using ~\eqref{eq:contract 1},~\eqref{eq:contract 2},~\eqref{eq:contract 3.1} and~\eqref{eq:contract 3.2}, we obtain: for any $\theta\in[0,1)$,
\begin{equation}
    \label{eq:summability}
    \begin{aligned}
    \sum_{\nu=\nu_3}^{\nu_4-1}\mathcal{T}^{\nu}
    \leq&\max\left\{\frac{c_5\tau^{\nu_3}}{1-\tau},2\mathcal{T}^{\nu_3}+2c_6(\Delta\mathcal{L}^{\nu_3})^{1-\theta}\right\}.
    \end{aligned}
\end{equation}
We are ready to show that, once the  sequence $\{X^\nu\}$ enters a sufficiently small  neighborhood of $X^{\infty}$, it cannot    escape from it. Let    $r>0$ be small enough such that  
$$\mathcal{B}_{r}(X^{\infty}):=\{X\in\mathbb{R}^{m\times n}:\|X-X^{\infty}\|< r\}\subseteq\mathcal{N}_{\infty}.$$
By
Theorem~\ref{thm:1} and the fact that $X^{\infty}$ is an accumulation point of $\{X^\nu\}$,  
there exists   $V \ni\nu_3>\max\{\nu_2,\nu_2'\}$  such that  
$$\|D^{\nu_3}\|< \frac{r}{4},\quad \|X^{\nu_3+1/2}-X^{\infty}\|<\frac{r}{4}, \quad \text{and}$$
$$\max\left\{\frac{c_5\tau^{\nu_3}}{1-\tau},2\mathcal{T}^{\nu_3}+2c_6(\Delta\mathcal{L}^{\nu_3})^{1-\theta}\right\}<\frac{r}{2}.$$

Hence,  $$\|X^{\nu_3}-X^{\infty}\|<\frac{r}{2}, \quad  \text{and thus}\quad   X^{\nu_3}\in\mathcal{B}_r(X^{\infty}).$$  By~\eqref{eq:summability}, 
$$\sum_{\nu=\nu_3}^{\nu_4-1}\mathcal{T}^{\nu}<\frac{r}{2}.$$

We prove next by contradiction that,  for any $\nu\geq\nu_3$, $X^{\nu}\in\mathcal{B}_r(X^{\infty})\subseteq\mathcal{N}_{\infty}$. Let us assume the contrary:    there exists $\nu_4'>\nu_3$ (being the smallest index) such that $\|X^{\nu_4'}-X^{\infty}\|\geq r$. Then $\nu\in V$ for $\nu_3\leq\nu<\nu_4'$, and   
$$\begin{aligned}
    \|X^{\nu_4'}-X^{\infty}\|\leq&\|X^{\nu_4'}-X^{\nu_3}\|+\|X^{\nu_3}-X^{\infty}\|
    \leq \|X^{\nu_3}-X^{\infty}\|+\sum_{\nu=\nu_3}^{\nu_4'-1}\mathcal{T}^{\nu}<r.
\end{aligned}$$
This contradicts the  assumption  $\|X^{\nu_4'}-X^{\infty}\|\geq r$. Since $r>0$ can be arbitrarily small, we have proved that $\{X^{\nu}\}$, as well as $\{X^{\nu+1/2}\}$, converge to $X^{\infty}.$

\subsection{Convergence rate analysis}
We have shown both $\{X^{\nu}\}$ and $\{X^{\nu+1/2}\}$ converge to $X^{\infty}$, for any  accumulation point   $X^{\infty}$   of $\{X^{\nu}\}$. We  can now establish the convergence rate.

By the convergence of $\{X^{\nu+1/2}\}$, it follows that there exists $\nu_5$ such that $\forall\nu\geq\nu_5$, both~\eqref{eq:contract 1} and~\eqref{eq:contract 2} hold and either~\eqref{eq:contract 3.1} or~\eqref{eq:contract 3.2} holds. Notice that for any index  $\bar{\nu}\geq\nu_5$, it holds 
\begin{equation}
    \label{eq:potential cover 2}     \|X^{\nu}-X^\infty\|\leq
 \sum_{\nu=\bar{\nu}}^{\infty}\|X^{\nu+1}-X^{\nu}\|\leq \mathcal{S}^{\bar{\nu}}:=c_3\sum_{\nu=\bar{\nu} }^{\infty}\mathcal{T}^{\nu}. 
\end{equation}

 $\bullet$ {\bf Case 1: $\theta\in \left({1}/{2},1\right)$.} Substituting~\eqref{eq:Enu dynm} in~\eqref{eq:potential cover 2}, we have: 
 \begin{equation}\begin{aligned}
    \mathcal{S}^{\nu}\leq& \sum_{t=\nu}^{\infty}\frac{1}{2}\mathcal{T}^{t-1}+c_6\sum_{t=\nu}^{\infty}\Delta\mathcal{L}^{t}_{\theta}
    \leq  \frac{1}{2}\mathcal{S}^{\nu-1}+c_6(\Delta\mathcal{L}^{\nu})^{1-\theta}\\
   \overset{\eqref{eq:lyapunov Enu2}}{\leq}  &\frac{1}{2}\mathcal{S}^{\nu-1}+c_6\left(\tilde{c}_6^{\theta}\mathcal{T}^{\nu-1}\right)^{\frac{1-\theta}{\theta}}
\leq\mathcal{S}^{\nu-1}-\mathcal{S}^{\nu}+c_6(\tilde{c}_6^{\theta}/c_5)^{\frac{1-\theta}{\theta}}(\mathcal{S}^{\nu-1}-\mathcal{S}^{\nu})^{\frac{1-\theta}{\theta}},
\end{aligned}
\end{equation}
for all $\nu\geq\nu_5$.  Since $\lim_{\nu\rightarrow\infty} \mathcal{S}^{\nu}=0$ and $\frac{1-\theta}{\theta}<1$, there exists $\nu_6\geq\nu_5$ such that for all $\nu\geq\nu_6$, $\mathcal{S}^{\nu-1}-\mathcal{S}^{\nu}<1$. Hence,   
\begin{equation}
    \label{eq:sublinear}
(\mathcal{S}^{\nu})^{\frac{\theta}{1-\theta}}\leq c_9(\mathcal{S}^{\nu-1}-\mathcal{S}^{\nu}),\quad \forall\nu\geq\nu_6,
\end{equation}
where $c_9=1+c_6(\tilde{c}_6^{\theta}/c_5)^{\frac{1-\theta}{\theta}}.$ By Lemma~\ref{lemma:sublinear} (see Appendix~\ref{subsec:A}) and \eqref{eq:sublinear},  there exists $c_{10}>0$ such that 
$$\|X^{\nu}-1(x^*)^\top\|\leq \mathcal{S}^{\nu}\leq c_{10}\nu^{-\frac{1-\theta}{2\theta-1}},\quad \forall\nu\geq\nu_6.$$
Let $c_{11}:=\max_{\nu<\nu_6}\|X^{\nu}-1(x^*)^\top\|\cdot \nu_6^{\frac{1-\theta}{2\theta-1}}$, and pick $c:=\max\{c_{10},c_{11}\}$. Then,
$$\|X^{\nu}-1(x^*)^{\top}\|\leq c\nu^{-\frac{1-\theta}{2\theta-1}},\quad \forall\nu\geq 0.$$

$\bullet$ {\bf Case 2: $\theta\in (0, {1}/{2}]$}. By~\eqref{eq:contract 1},
\begin{equation*}
\|X^{\nu}-1(x^*)^\top\|\leq\mathcal{S}^{\nu}\leq c_7\tau^{\nu}, \quad\forall\nu\geq\nu_5,
\end{equation*}
for some $c_7>0$.
Let $c_8:=\max_{\nu\leq\nu_5}\|X^{\nu}-1(x^*)^\top\|\cdot\tau^{-(\nu_2+1)}$ and pick $c':=\max\{c_7,c_8\}$, then
$$\|X^{\nu}-1(x^*)^\top\|\leq c'\tau^{\nu},\quad\forall\nu\geq 0.$$

\indent$\bullet$ $\textbf{Case 3: }\theta=0$. We consider the following  two  sub-cases.
\begin{itemize}
    \item [\bf (i)]  \eqref{eq:contract 3.1} holds: Following the analysis as in Case 2,   we infer  that, there exists $c''>0$ such that
$$\|X^{\nu}-1(x^*)^\top\|\leq c''(\tau')^{\nu},\quad\forall\nu\geq 0$$
\item[\bf (ii)]  \eqref{eq:contract 3.2} holds:  $\{X^{\nu}\}$ will converges to $1(x^*)^\top$ in a finite number $\nu_2'$ of steps.
\end{itemize}

 \subsection{Final convergence results}
  The following  theorem   summarizes the  results proved in the previous sections. 
\begin{theorem}
    \label{thm:2}
     Consider Problem (\ref{problem}) under Assumption~\ref{ass:function}.   Let  $\{X^{\nu}\}_{\nu\in \mathbb{N}}$ be the sequence generated by  the SONATA algorithm  under Assumption~\ref{ass:W} and the   tuning of $\alpha$ and  $\gamma$ as in Theorem~\ref{thm:1}, with   $\xi=L$ therein. 
     Further  suppose  $\rho< {1}/{\sqrt{5}}$. 
  
  Then, if the sequence $\{X^{\nu}\}$ has an accumulation point $X^{\infty}$--it mush be $X^{\infty}=1(x^*)^\top$, for some critical point $x^*$ of $u$--and $u$ satisfies the KL property at $x^\star$ with exponent $\theta\in (0,1)$ and coefficient $\kappa$, then $X^{\infty}$  is unique and
$$\lim_{\nu\rightarrow\infty}X^{\nu}=X^{\infty}=1(x^*)^\top.$$
 Further, the following convergence rates hold:
   \begin{enumerate}
       \item [(i)] If $\theta\in\left({1}/{2},1\right)$,   for all $\nu\in\mathbb{N}_+$ and some $c>0$,
       $$\|X^{\nu}-1(x^*)^{\top}\|\leq c\nu^{-\frac{1-\theta}{2\theta-1}};$$ 
       \item [(ii)]If $\theta\in(0,{1}/{2}]$, then for all $\nu\in\mathbb{N}_+$ and some $c^\prime>0$,
       $$\|X^{\nu}-1(x^*)^{\top}\|\leq c^\prime \left(\frac{1}{1+\omega}\right)^{\nu/2},$$ where {$$\omega=\mathcal{O}\left(\frac{L}{\kappa^{1/\theta}}+\frac{\rho^2}{1-5\rho^2}\frac{L_{\text{mx}}^2}{L^2}\right);$$}
       \item [(iii)] If $\theta=0$, then either $\{X^{\nu}\}$ converges to $1(x^*)^\top$ in a finite number of steps or there exists $c^{\prime\prime}>0$ such that for all $\nu\in\mathbb{N}_+$,
 $$\|X^{\nu}-1(x^*)^{\top}\|\leq c^{\prime\prime} \left(\frac{1}{1+\omega^\prime}\right)^{\nu/2},$$ where
{$$\omega'=\mathcal{O}\left(\frac{\rho^2}{1-5\rho^2}\frac{L_{\text{mx}}^2}{L^2}\right).$$}
   \end{enumerate}
   { The explicit expression of $\omega$ and $\omega^\prime$ above is given in~\eqref{omega} and~\eqref{omega'}, respectively. }
\end{theorem}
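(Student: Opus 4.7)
The plan is to deduce Theorem~\ref{thm:2} from the Lyapunov descent of Theorem~\ref{thm:1}, recast as $(\mathcal{T}^\nu)^2 \leq \Delta\mathcal{L}^\nu - \Delta\mathcal{L}^{\nu+1}$ with $\Delta\mathcal{L}^\nu := \mathcal{L}^\nu - \mathcal{L}^\infty$, together with the step bound $\|X^{\nu+1}-X^\nu\|\leq c_3\mathcal{T}^\nu$ coming from prox nonexpansiveness and the definition of $\mathcal{E}^\nu$. Summability of $\{\mathcal{T}^\nu\}$ at the claimed rate then delivers both convergence of the whole sequence and the bound on $\|X^\nu - X^\infty\|$ via the tail sum $\mathcal{S}^\nu := c_3\sum_{t\geq\nu}\mathcal{T}^t$. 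To get summability, I would apply the concavity trick $a-b\leq (1-\theta)^{-1}a^\theta(a^{1-\theta}-b^{1-\theta})$ to the descent, obtaining a recursion in $\mathcal{T}^\nu$ whose closure demands a matching \emph{upper} bound of $\Delta\mathcal{L}^{\nu+1}$ in terms of $\mathcal{T}^\nu$.

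The main obstacle is producing this upper bound from the KL of $u$ rather than from a postulated KL of the Lyapunov $\mathcal{L}^\nu$, which would decouple the analysis from the original objective as in~\cite{JW18,Amir18}. I would decompose $\Delta\mathcal{L}^{\nu+1}$ into $[U(X^{\nu+1/2})-\mathcal{L}^\infty]$ plus residuals that the consensus/tracking dynamics~\eqref{eq:cover dynm} and the Jensen-type bound~\eqref{eq:global decent 1} absorb into $\tilde{c}_4(\mathcal{T}^\nu)^2$. The remaining gap $U(X^{\nu+1/2})-\mathcal{L}^\infty$ is controlled by applying KL \emph{agentwise}: for $\nu$ large each $x_i^{\nu+1/2}$ sits in a KL neighborhood of $x^\star$, and an element of $\partial u(x_i^{\nu+1/2})$ of norm controlled by $\mathcal{T}^\nu$ can be exhibited by combining the optimality condition of the prox step with the tracking error $\Delta^\nu$. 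This is precisely Lemma~\ref{lemma:U KL}, yielding $U(X^{\nu+1/2})-\mathcal{L}^\infty\leq \kappa^{-1/\theta}(\tilde{c}_5\mathcal{T}^\nu)^{1/\theta}$ and hence the desired two-sided relation.

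With the two bounds in hand I would split by $\theta$. For $\theta\in(0,1/2]$ the quadratic term dominates ($1/\theta\geq 2$), so $\Delta\mathcal{L}^{\nu+1}\leq\tilde{c}_6(\mathcal{T}^\nu)^2$; combining with the descent and choosing $\omega\in(0,1/\tilde{c}_6]$ yields the Q-linear contraction $(1+\omega)\Delta\mathcal{L}^{\nu+1}\leq\Delta\mathcal{L}^\nu$, which propagates through the descent to R-linear decay $\mathcal{T}^\nu\leq c_5\tau^\nu$ with $\tau=(1+\omega)^{-1/2}$, and to the claimed rate on $\|X^\nu - 1(x^*)^\top\|$ after summing. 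For $\theta\in(1/2,1)$ the $(\mathcal{T}^\nu)^{1/\theta}$ term is dominant; passing to $\mathcal{S}^\nu$ converts the recursion into $(\mathcal{S}^\nu)^{\theta/(1-\theta)}\leq c_9(\mathcal{S}^{\nu-1}-\mathcal{S}^\nu)$, and the standard sublinear-rate lemma (Lemma~\ref{lemma:sublinear}) yields $\mathcal{O}(\nu^{-(1-\theta)/(2\theta-1)})$. For $\theta=0$ the KL inequality becomes $\kappa\leq\|G^\nu\|$ on the active level set: either $U(X^{\nu+1/2})<\mathcal{L}^\infty$ eventually, which removes the $\kappa^{-1/\theta}$ term and recovers a $\kappa$-independent linear rate via the Case $\theta\in(0,1/2]$ argument, or the bound $\mathcal{T}^\nu\geq \kappa/\tilde{c}_5$ holds on an infinite index set, which together with $\mathcal{T}^\nu\to 0$ forces finite termination.

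A final subtlety is that KL is a local property, so I must ensure the iterates never leave the KL neighborhood $\mathcal{N}_\infty$ once they enter. I would close this with a classical trap argument: for $r>0$ small enough that $\mathcal{B}_r(X^\infty)\subset\mathcal{N}_\infty$, pick an index $\nu_3$ with $\|X^{\nu_3}-X^\infty\|<r/2$ and with the tail sum bound from~\eqref{eq:summability} also below $r/2$; then induction on the smallest putative escape index $\nu_4'$, combined with the telescoping estimate $\|X^{\nu_4'}-X^{\nu_3}\|\leq\sum_{t=\nu_3}^{\nu_4'-1}\mathcal{T}^t$, yields the contradiction $\|X^{\nu_4'}-X^\infty\|<r$. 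This confines the sequence to $\mathcal{N}_\infty$, identifies the unique accumulation point with $X^\infty = 1(x^*)^\top$, and the preceding estimates on $\mathcal{S}^\nu$ translate verbatim into the stated rates on $\|X^\nu - 1(x^*)^\top\|$.
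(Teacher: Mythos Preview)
Your proposal is correct and follows essentially the same approach as the paper: the same Lyapunov-gap descent $(\mathcal{T}^\nu)^2\leq\Delta\mathcal{L}^\nu-\Delta\mathcal{L}^{\nu+1}$, the same decomposition $\Delta\mathcal{L}^{\nu+1}\leq[U(X^{\nu+1/2})-\mathcal{L}^\infty]+\tilde{c}_4(\mathcal{T}^\nu)^2$ closed by Lemma~\ref{lemma:U KL} (your ``agentwise KL'' is exactly the paper's lift of KL from $u$ to $U$ via Lemma~\ref{lemma:block kl} combined with the prox-optimality subgradient bound of Lemma~\ref{lemma:G-D}), the same three-way case split on $\theta$, and the same trap argument on $\mathcal{B}_r(X^\infty)$ to confine the sequence. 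The only cosmetic difference is that the paper routes the $\theta\in(1/2,1)$ case through the intermediate contraction~\eqref{eq:contract 2} before passing to $\mathcal{S}^\nu$, whereas you go to $\mathcal{S}^\nu$ directly; the resulting recursion and the invocation of Lemma~\ref{lemma:sublinear} are identical.
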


 Next we  provide the iteration complexity of the SONATA algorithm~\eqref{eq:a} under the KL property with   $\theta\in(0,1/2]$. 
\begin{corollary}
    \label{cor:1} In state the setting of Theorem~\ref{thm:2}, with in particular   $\theta\in (0,1/2]$, and additionally
    \begin{equation}\label{eq:cond_rho}\rho<\min\left\{\frac{1}{\sqrt{5}},\frac{1}{\sqrt{42w_{\text{mx}}+26}}\cdot\frac{L}{L_{\text{mx}}}\right\}.\end{equation}
   Then, 
   the number of iterations  needed for $\{X^\nu\}$ to reach an   $\epsilon$-stationary solution of \eqref{problem}  is given by $$\mathcal{O}\left(\frac{L}{\kappa^{1/\theta}}\log(1/\epsilon)\right),$$
   where $\kappa\in(0,\infty)$ is the KL coefficient of $u$ at $x^*$.
\end{corollary}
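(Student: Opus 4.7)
The plan is to treat the linear rate of Theorem~\ref{thm:2}(ii) as a black box and derive the complexity in two moves: (a) invert the geometric bound into a logarithmic iteration count, and (b) show that, under the strengthened condition~\eqref{eq:cond_rho} on $\rho$, the contraction parameter $\omega$ from~\eqref{omega} satisfies $1/\omega=\mathcal{O}(L/\kappa^{1/\theta})$.

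First, I would invert the rate. Theorem~\ref{thm:2}(ii) gives $\|X^\nu-1(x^\star)^\top\|\leq c'(1+\omega)^{-\nu/2}$. In the regime of interest $\omega\leq 1$, so $\log(1+\omega)\geq \omega/2$; forcing the right-hand side below $\epsilon$ and solving for $\nu$ yields $\nu=\mathcal{O}((1/\omega)\log(1/\epsilon))$. Note that $\|X^\nu-1(x^\star)^\top\|\leq \epsilon$ implies both $\|X_\perp^\nu\|\leq \epsilon$ and $\|\bar{x}^\nu-x^\star\|\leq \epsilon/\sqrt{m}$; since $x^\star$ is stationary and $\nabla f$ is $L$-Lipschitz while $\texttt{prox}_{\alpha r}$ is nonexpansive, a standard gradient-mapping residual at $\bar{x}^\nu$ is $\mathcal{O}(\epsilon)$, delivering $\epsilon$-stationarity.

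Second, I would bound $1/\omega$ using the explicit formula~\eqref{omega} with the tuning $\xi=L$ prescribed in Theorem~\ref{thm:2}, $\alpha=\Theta(1/L)$, and $\gamma$ at its lower admissible value from Theorem~\ref{thm:1}, namely $\gamma=\Theta\!\left(\frac{1+Lw_{\mathrm{mx}}/L_{\mathrm{mx}}^2}{1-5\rho^2}\right)$. Under these choices one verifies $\tilde c_2=\Theta(L)$ and $\tilde c_3=\Theta(1)$, so the denominator of $\omega$ splits into a KL-dependent block of order $L/\kappa^{1/\theta}$ and a network-dependent block of order $\gamma\rho^2 L_{\mathrm{mx}}^2/L$ plus the $\tilde c_1$ contribution from~\eqref{tc1}. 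The second bound $\rho<L/(L_{\mathrm{mx}}\sqrt{42 w_{\mathrm{mx}}+26})$ in~\eqref{eq:cond_rho} is calibrated precisely so that $\gamma\rho^2 L_{\mathrm{mx}}^2\lesssim L$, making the network block an $\mathcal{O}(L)$ quantity, which in the meaningful regime ($\kappa$ small, so $\kappa^{1/\theta}\leq 1$) is dominated by the KL block. Thus $1/\omega=\mathcal{O}(L/\kappa^{1/\theta})$.

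Combining the two steps yields the stated complexity $\mathcal{O}\!\left((L/\kappa^{1/\theta})\log(1/\epsilon)\right)$. The main obstacle is the bookkeeping in the second step: systematically tracking how $\tilde c_1,\tilde c_2,\tilde c_3,\tilde c_4,\tilde c_5,\tilde c_6$ scale under the chosen tuning of $(\alpha,\gamma,\xi)$, and verifying that the specific numerical coefficient $42w_{\mathrm{mx}}+26$ appearing in~\eqref{eq:cond_rho} is exactly the one needed to make the network-dependent summand in the denominator of $\omega$ absorbable into the KL-dependent summand, with no residual dependence on $1-\rho$ once the bound $\rho<1/\sqrt{5}$ gives a positive constant lower bound on $1-5\rho^2$.
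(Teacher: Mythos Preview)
Your proposal is correct and matches the paper's approach: the paper does not give a separate proof of Corollary~\ref{cor:1}, treating it as an immediate consequence of Theorem~\ref{thm:2}(ii) together with the explicit expression~\eqref{omega} for $\omega$, and your two-step plan (invert the geometric rate, then bound $1/\omega=\tilde c_6$ by tracking the constants under the tuning $\xi=L$, $\alpha=\Theta(1/L)$, $\gamma$ near its lower admissible value) is precisely the computation that underlies the stated complexity and the big-$\mathcal{O}$ expression for $\omega$ in Theorem~\ref{thm:2}(ii). Your identification of the role of the bound~\eqref{eq:cond_rho}---to make the network-dependent block $\gamma\rho^2 L_{\mathrm{mx}}^2/\tilde c_2$ and the $\tilde c_1$ contribution absorbable into the KL block $L/\kappa^{1/\theta}$---is exactly right.
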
 
Theorem~\ref{thm:2} fully characterizes the convergence of the sequence  
 the sequence $\{X^\nu\}$ generated by the SONATA algorithm,  under the KL property of $u$. This aligns with results achieved by the proximal gradient algorithm in centralized settings~\cite{attouch2009convergence}, and addresses the gap in the decentralized optimization literature.  The following comments are in order. \smallskip

$\bullet$ {\it On the condition on $\rho$}: 
The stipulation on $\rho$, as requested by \eqref{eq:cond_rho}, signifies the need of a
well-connected network. This is a common requirement by decentralized algorithms \cite{Nedic_Olshevsky_Rabbat2018}. When the network graph is predetermined
(with given $W$), one can meet the condition \eqref{eq:cond_rho} (if not a-priori satisfied) by employing at each
agent's side multiple rounds of communications per gradient evaluation.  This is a common practice
(see, e.g.,  \cite{scutari2019distributed}) that in the above setting results in a communication overhead of the order $\tilde{\mathcal{O}}((1-\rho)^{-1})$, if the same gossip matrix $W$ is used in each communication. This number can be reduced to   $\tilde{\mathcal{O}}((1-\rho)^{-1/2}$, if the gossip matrices in the communication rounds are chosen according to  Chebyshev \cite{auzinger2011iterative,scaman2017optimal} or   Jacobi  polynomials \cite{Berthier2020}.     Here,     $\rho=\|W-11^\top/m\|$  does not necessarily satisfies  \eqref{eq:cond_rho}.   \smallskip

$\bullet$ {\it On the linear convergence}:   When the extra communications per gradient evaluation are $\tilde{\mathcal{O}}((1-\rho)^{-1})$, the total number of communication to reach an $\epsilon$-stationary solution of \eqref{problem}, for arbitrary $\rho<1$ and KL exponent $\theta\in(0,1/2]$,
reads $$\tilde{\mathcal{O}}\left(\frac{L}{\kappa^{1/\theta}}\frac{1}{1-{\rho}} \log\left(\frac{1}{\epsilon}\right)\right).$$ Here, $\tilde{\mathcal{O}}$ hides logarithm dependence on $L, L_{\text{mx}}, \kappa$ and $\theta$.

  It is worth remarking that the convergence rate  established above recovers that in the literature (in particular of the SONATA algorithm \cite{scutari2019distributed}) when $f$ in (\ref{problem}) is assumed to be  $\mu$-strongly convex. In fact, in this case $u$   satisfies the KL property with exponent $\theta= {1}/{2}$ and $\kappa=\sqrt{\mu}$. Therefore,   $ {L}/{\kappa^{1/\theta}}$ reduces to $L/\mu$, which is the condition number of $f$. \smallskip 

$\bullet$ \textit{On the existence of accumulation points}: The postulate in Theorem~\ref{thm:2} on the existence of an accumulation point of the sequence $\{X^\nu\}$ is standard and holds under serval alternative conditions. For instance, boundedness of iterates can be guaranteed in the case   $u$ has compact sublevel sets $[u\leq \mathcal{L}^0-m\underline{u}]$, where $\mathcal{L}^0$ is defined in~\eqref{eq:Lyapunov_function}. \smallskip  

$\bullet$ \textit{On the finite convergence when $\theta=0$: }Unlike the proximal gradient algorithm in the centralized setting,    finite time convergence may not always hold when $\theta=0$. This is due to the perturbation introduced by consensus errors. However, as by-product of our analysis,  we have identified two sufficient conditions under which  finite time convergence is guaranteed, namely:
 \begin{itemize} \item[{\bf (i)}] There exists a subsequence of $\{U(X^{\nu+1/2})\}$ such that $U(X^{\nu+1/2})-\mathcal{L}^{\infty}\geq 0$ over this subsequence; or
 \item[\textbf{(ii)}] The objective function satisfies a slightly stronger version of the KL property   at $x^*$, with exponent $\theta$, we refer to (Def.~\ref{def-KL-2} in) Appendix for the details. \end{itemize}  The condition in  \textbf{(i)} requires that  the stationary point $x^*$ the sequence is approaching to is a local minimum of $u$. For certain landscapes of $u$ this is the case;  we refer to~\cite{lee2019first,jin2017escape,lee2016gradient} and references therein  
 for some examples arising from the machine learning literature. Condition \textbf{(ii)} is actually satisfied by a wide range of applications. For instance,  this is the case for all   the applications discussed in Sec~\ref{sub-examples}; quite interestingly, they  share the same  exponents as those characterizing the   original KL property, as defined in Definition~\ref{def-KL}), see Appendix B. 
 
\subsection{Proof of Lemma~\ref{lemma:U KL}}
\label{subsec:lemma proof}
Since $X^{\infty}=1(x^*)^{\top}$ is an accumulation point, we have 
$\mathcal{L}^{\infty}=U(X^{\infty})$. By the KL assumption on $u$ and  Lemma~\ref{lemma:block kl}, we have that $U$ satisfies the KL property at $X^{\infty}$ with exponent $\theta$ and parameters ($\kappa/\max\{d^{\theta-1/2},1\},1)$. Hence,  there exists a neighborhood $\mathcal{N}_{\infty}$ of $X^{\infty}=1(x^\star)^\top$ such that,
for any $X\in\mathcal{N}^{\infty}$ satisfying $U(X)-\mathcal{L}^{\infty}<1$,
\begin{equation}
\label{eq:std KL 1}
(U(X)-\mathcal{L}^{\infty})^{\theta}\leq \left(\frac{\max\{d^{\frac{1}{2}},1\}}{\kappa}\right)\|G\|,\quad\forall G\in\partial U(X). 
\end{equation}
Here,   $\theta\in[0,1)$ and and $\kappa\in(0,\infty)$ are     the KL exponent and KL parameter of $u$   at the stationary point $x^*$, respectively. 

By Theorem~\ref{thm:1},  $$\mathcal{T}^{\nu}\rightarrow 0,\quad   U(X^{\nu+1/2})\rightarrow\mathcal{L}^{\infty}.$$It follows that, there exists ${\nu}_1\in\mathbb{N}_+$  such that
   \begin{equation*}
    \tilde{c}_5\mathcal{T}^{\nu}<1,\quad U(X^{\nu+1/2})-\mathcal{L}^{\infty}<1,\quad \forall \nu\geq{\nu}_1.
    \end{equation*}
Define 
$$V:=\left\{\nu\geq\nu_1: X^{\nu+1/2}\in\mathcal{N}_{\infty}\right\}.$$
Notice that $V$ is nonempty, due to   the fact that $X^{\infty}$ is an accumulation point of $\{X^\nu\}$. By~\eqref{eq:std KL 1}, it follows  that, for any $\nu\in V$ and any $G^{\nu}\in\partial U(X^{\nu+1/2})$, we have $\tilde{c}_5\mathcal{T}^{\nu}<1$ and,
\begin{equation}\label{eq:K-G}U(X^{\nu+1/2})-\mathcal{L}^{\infty}
\leq \left(\frac{\max\left\{d^{\theta-\frac{1}{2}},1\right\}}{\kappa}\right)^{1/\theta}\|G^{\nu}\|^{1/\theta},
\end{equation}
 for $\theta\in(0,1)$.
We proceed   upper bounding $\|G^{\nu}\|$ in terms of    $\mathcal{T}^{\nu}$ using Lemma~\ref{lemma:G-D} (See Appendix A): for any $\nu\in V$, there exists $G_0^{\nu}$ such that
\begin{equation}
    \label{eq:G-D}
    \|G_0^{\nu}\|^2
        \leq3\left(L^2+\frac{1}{\alpha^2}\right)\|D^{\nu}\|^2+3\mathcal{E}^{\nu}.
\end{equation}
Substitute~\eqref{eq:G-D} into~\eqref{eq:K-G} and we get our desired result. \hfill $\square$




\section{Numerical Results}
\label{sec:numeric result}
\noindent In this section, we present some numerical results that support our theoretical findings. All experiments were conducted on simulated undirected graphs consisting of $m=20$ nodes, with edges generated according to the Erdős-Rényi model with a connection probability of 0.45. The weight matrix 
$W$ used in all the decentralized algorithms was determined according to the Metropolis-Hastings rule~\cite{auzinger2011iterative}.  All the  decentralized algorithms in the simulations were randomly initialized within the unit ball, and their tuning parameters,  as reported, were determined through a grid-search procedure to achieve the best practical  performance.
\subsection{Decentralized PCA}
\label{subsec:pca}
 Consider the decentralized PCA problem, as described in Sec.~\ref{sub-examples}(iv). The data generation model is the following: 
 Each agent $i$ locally owns a data matrix ${A}_{i}=(a_{ij}^\top)\in \mathbb{R}^{20\times 50}$, whose rows $a_{ij}^\top$, $j=1\ldots, 20$, are i.i.d. random vectors, drawn from the $N({0},{\Sigma})$. The covariance matrix $\Sigma$, with eigendecomposition   ${\Sigma} = {U}{\Lambda}{U}^{\top}$, is generated as follows: synthesize ${U}$ first by generating a square matrix whose entries follow the i.i.d. standard normal distribution, then perform the QR decomposition to obtain its orthonormal basis; and the eigenvalues diag($\Lambda$) are i.i.d. uniformly distributed in $[0,1]$. 


  We benchmark SONATA (with tuning parameter  $\alpha = 0.001$) against  PProx-PDA \cite{DaMi17} (tuning parameters: $\gamma=10^{-4}, \ \beta=21.2$) and the  decentralized  ADMM~\cite{MZM16} (tuning parameters: ($\rho = 3$). The parameters for all algorithms were optimized through a grid search over a suitable range of values to ensure the best practical performance. In Figure \ref{fig2}, we plot  $\text{log}_{10}  ||{X}^{\nu}-1({x}^{*})^{\top}||$ versus the number of  iterations $\nu$, for the different algorithms.  All algorithm were observed to converge to the same stationary solution. The figure demonstrates the linear convergence rate of the SONATA algorithm, with SONATA comparing favorable to PProx-PDA and decentralized ADMM. It is important to remark that, unlike for SONATA, there is no theoretical support for the linear convergence for PProx-PDA and decentralized ADMM. 
As shown in the graph, the geometric convergence results match the theoretical results we had in this work.

\begin{figure}[h]
\centering
\hspace*{-0.01in}
\includegraphics[scale=0.32]{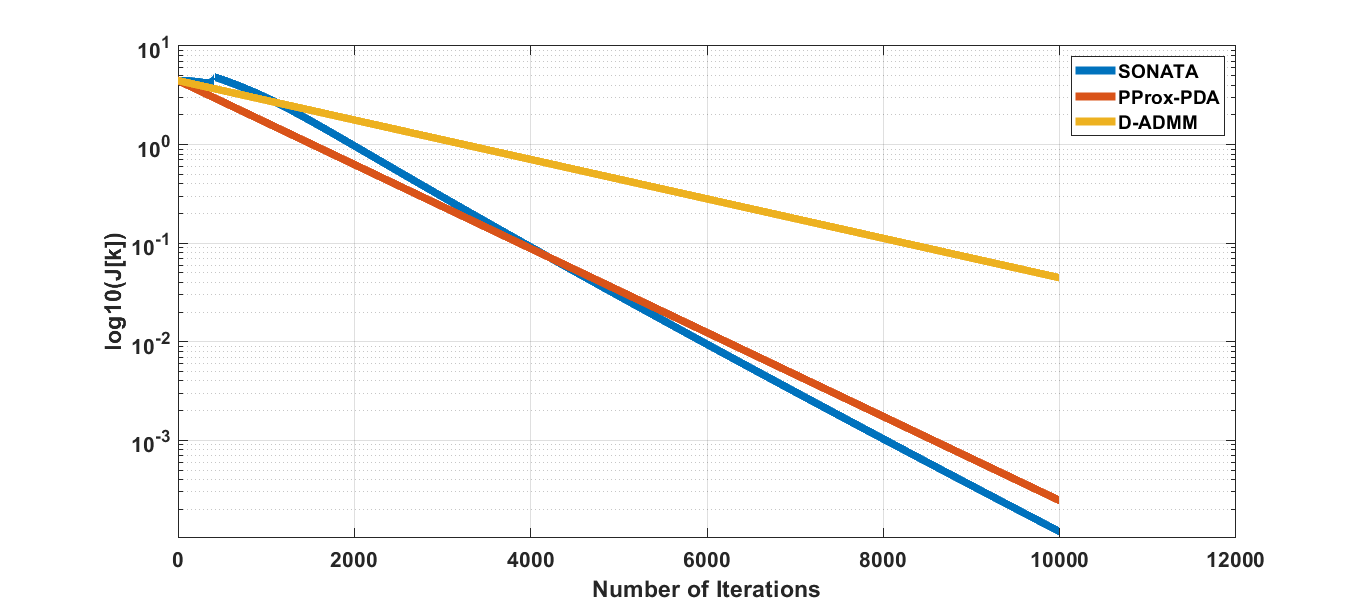}
\caption{Decentralized PCA: Distance of the iterates from a stationary solution vs. iterations.}
\label{fig2}
\end{figure}

\subsection{Sparse linear regression with $\ell_1$ regularization}
\label{subsec:lasso}
 Consider the decentralized LASSO problem using $\ell_1$ regularization, as described in Sec.~\ref{sub-examples}(i).  Data are generating according the following model.   The ground truth   ${x}^{*}\in \mathbb{R}^{350}$ is built   first drawing randomly a vector from the normal distribution $N({0},{I}_{350})$, then thresholding the smallest 40\% of its elements to $0$. The underlying linear model is ${y}_{i} = {A}_{i}{x}^{*} + {w}_{i}$, where the observation matrix ${A}_{i}\in \mathbb{R}^{15\times 350}$ is generated by   drawing i.i.d.  elements from the distribution $N(0,1)$, and then normalizing the rows to unit norm; and ${w}_{i}$ is the additive noise vector with i.i.d. entries from $N(0,0.1)$. Finally the regularization parameter is set to  $\lambda = 2$.
We contrast the  SONATA algorithm (tuning parameter: $\alpha = 0.015$) with the most popular decentralized algorithms proposed to solve convex, composite optimization problems, namely:  (i) NIDS (tuning parameters: $\alpha=0.017$)~\cite{ZWM19}, (ii) PG-EXTRA (tuning parameter: $\alpha=0.011$)~\cite{WQGW15}, (iii) Distributed ADMM (tuning parameter: $\rho = 4.2$)~\cite{MZM16}. We also included the   PProx-PDA (tuning parameters: $\gamma=10^{-4}, \ \beta=24.21$)~\cite{DaMi17}. 
 In Figure \ref{fig3}, we plot  $\text{log}_{10}   ||{X}^{\nu}-1({x}^{*})^{\top}||$ versus the number of   iterations $\nu$, where $x^\star$ is a solution of the LASSO problem.  The figure confirms linear convergence of the SONATA algorihtm, given that the LASSO function is a KL function with exponent $\theta=1/2$. No such a theoretical result is available for the other decentralized algorithms.  
\begin{figure}[h]
\hspace*{-0.01in}
\centering
\includegraphics[scale=0.32]{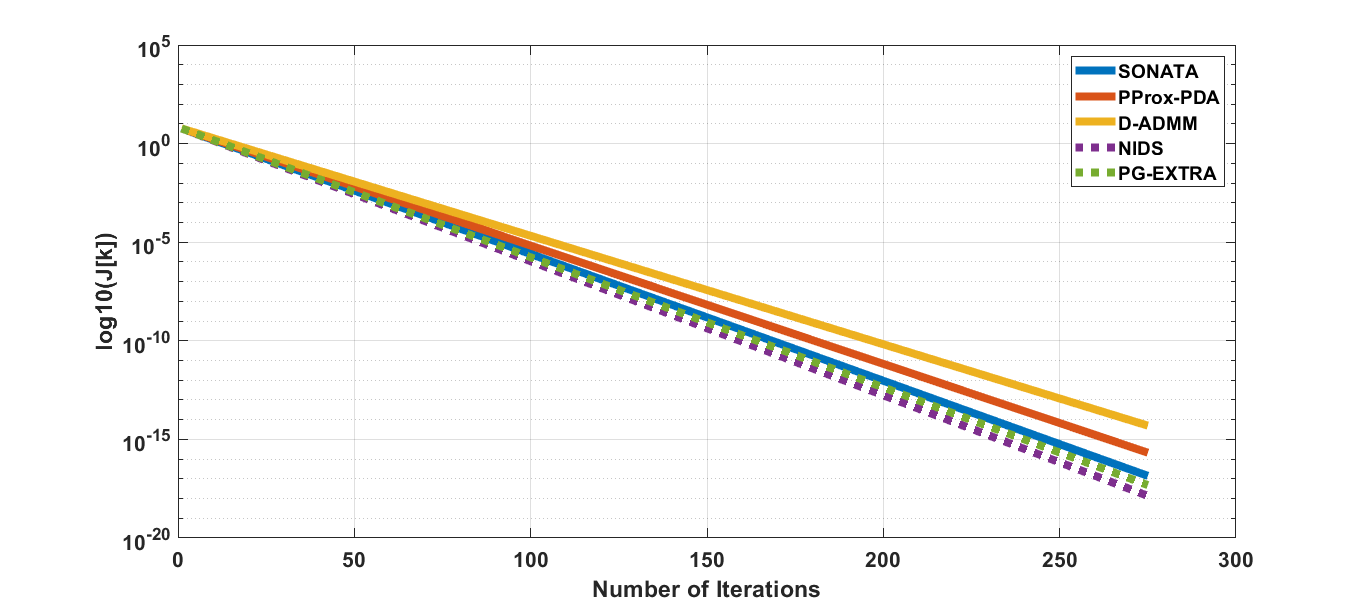}
\caption{LASSO with $\ell_1$ regularization: distance of the iterates from a solution vs. iterations. }
\label{fig3}
\end{figure}
\subsection{SCAD regularized distributed least squares}
\label{subsec:scad}
Consider now the sparse linear regression problem formulated as quadratic minimization with   the SCAD regularization, as described in  Sec.~\ref{sub-examples}(ii). The data generation model is the following. 
 The ground truth signal ${x}^{*}\in \mathbb{R}^{350}$ is built by first drawing randomly a vector from the normal distribution $\mathcal{N}({0},{I}_{350})$, then thresholding the smallest 20\% of its elements to $0$. The underlying linear model is generated as for the example in Sec.~\ref{subsec:lasso}.
 

The SONATA algorithm(with tuning: $\alpha = 0.002$) is compared with  the PProx-PDA (tuning parameter: $\gamma = 10^{-4}, \beta=30.23$)~\cite{DaMi17} and the  Distributed ADMM (tuning parameter $\rho = 2.5$)~\cite{MZM16}.
In Figure \ref{fig4}, we plot  $\text{log}_{10}   ||{X}^{\nu}-1({x}^{*})^{\top}||$ versus the number of  iterations $\nu$. All the algorithms are observed to converge to the same stationary solution.   The figure confirms the same results as observed for the other classes of functions satisfying the KL property with exponent $\theta=1/2$.
\begin{figure}[h]
\centering
\includegraphics[scale=0.32]{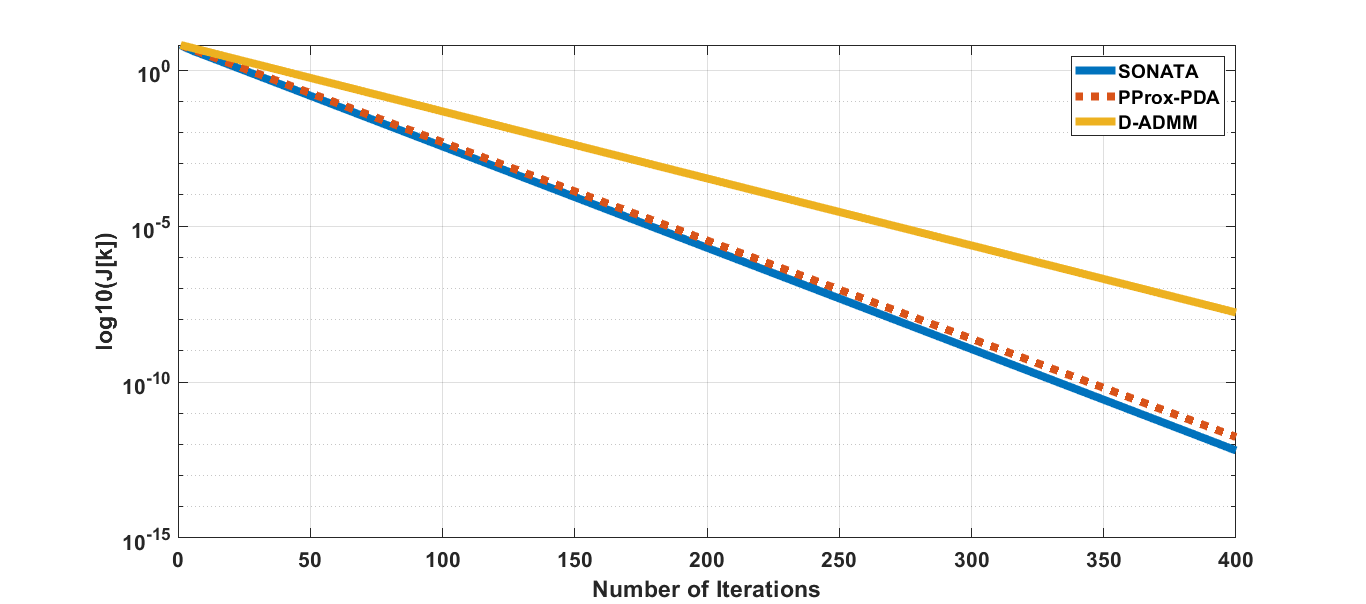}
\caption{Sparse linear regression with SCAD regularization: Distance from stationarity vs. iterations.}
\label{fig4}
\end{figure}
\vspace{-0.3cm}
\section{Concluding Remarks}
\vspace{-0.2cm}
We provided a new line of analysis for the rate of convergence of the SONATA algorithm, to solve Problem~\eqref{problem}, under the KL property of $u$.  The established convergence rates  match  those obtained in the centralized scenario by the centralized proximal gradient algorithm, for different values of the exponent $\theta\in (0,1)$. When $\theta=0$, finite time convergence of the sequence (as for the proximal gradient algorithm in the centralized case) cannot be always guarantees,  due to the perturbation induced by  consensus errors.  In such cases we still identified  sufficient conditions under which the SONATA algorithm matches the behavior of the centralized proximal gradient.  A future direction to  address this issue would be devising  decentralized designs that escape        local maxima.

\bibliographystyle{plain}
\bibliography{Bib}

\begin{thebibliography}{10}

\bibitem{PRB05}
Pierre-Antoine Absil, Robert Mahony, and Ben Andrews.
\newblock Convergence of the iterates of descent methods for analytic cost functions.
\newblock {\em SIAM Journal on Optimization}, 16(2):531--547, 2005.

\bibitem{ahn2017difference}
Miju Ahn, Jong-Shi Pang, and Jack Xin.
\newblock Difference-of-convex learning: directional stationarity, optimality, and sparsity.
\newblock {\em SIAM Journal on Optimization}, 27(3):1637--1665, 2017.

\bibitem{attouch2009convergence}
Hedy Attouch and J{\'e}r{\^o}me Bolte.
\newblock On the convergence of the proximal algorithm for nonsmooth functions involving analytic features.
\newblock {\em Mathematical Programming}, 116:5--16, 2009.

\bibitem{attouch2010proximal}
H{\'e}dy Attouch, J{\'e}r{\^o}me Bolte, Patrick Redont, and Antoine Soubeyran.
\newblock Proximal alternating minimization and projection methods for nonconvex problems: An approach based on the kurdyka-{\l}ojasiewicz inequality.
\newblock {\em Mathematics of operations research}, 35(2):438--457, 2010.

\bibitem{attouch2013convergence}
Hedy Attouch, J{\'e}r{\^o}me Bolte, and Benar~Fux Svaiter.
\newblock Convergence of descent methods for semi-algebraic and tame problems: proximal algorithms, forward--backward splitting, and regularized gauss--seidel methods.
\newblock {\em Mathematical Programming}, 137(1):91--129, 2013.

\bibitem{auzinger2011iterative}
Winfried Auzinger and J~Melenk.
\newblock Iterative solution of large linear systems.
\newblock {\em Lecture notes, TU Wien}, 2011.

\bibitem{bento2024convergence}
GC~Bento, BS~Mordukhovich, TS~Mota, and Yu~Nesterov.
\newblock Convergence of descent methods under kurdyka-$\backslash$l ojasiewicz properties.
\newblock {\em arXiv preprint arXiv:2407.00812}, 2024.

\bibitem{Berthier2020}
Raphael Berthier, Francis Bach, and Pierre Gaillard.
\newblock Accelerated gossip in networks of given dimension using jacobi polynomial iterations.
\newblock {\em SIAM J. on Mathematics of Data Science}, 1:24--47, 2020.

\bibitem{bianchi2012convergence}
Pascal Bianchi and J{\'e}r{\'e}mie Jakubowicz.
\newblock Convergence of a multi-agent projected stochastic gradient algorithm for non-convex optimization.
\newblock {\em IEEE transactions on automatic control}, 58(2):391--405, 2012.

\bibitem{bolte2007lojasiewicz}
J{\'e}r{\^o}me Bolte, Aris Daniilidis, and Adrian Lewis.
\newblock The {\l}ojasiewicz inequality for nonsmooth subanalytic functions with applications to subgradient dynamical systems.
\newblock {\em SIAM Journal on Optimization}, 17(4):1205--1223, 2007.

\bibitem{BDLM10}
J{\'e}r{\^o}me Bolte, Aris Daniilidis, Olivier Ley, and Laurent Mazet.
\newblock Characterizations of {\l}ojasiewicz inequalities: subgradient flows, talweg, convexity.
\newblock {\em Transactions of the American Mathematical Society}, 362(6):3319--3363, 2010.

\bibitem{bolte2014proximal}
J{\'e}r{\^o}me Bolte, Shoham Sabach, and Marc Teboulle.
\newblock Proximal alternating linearized minimization for nonconvex and nonsmooth problems.
\newblock {\em Mathematical Programming}, 146(1):459--494, 2014.

\bibitem{cannelli2019asynchronous}
Loris Cannelli.
\newblock {\em Asynchronous Parallel Algorithms for Big-Data Nonconvex Optimization}.
\newblock PhD thesis, Purdue University Graduate School, 2019.

\bibitem{Amir18}
Amir Daneshmand, Gesualdo Scutari, and Vyacheslav Kungurtsev.
\newblock Second-order guarantees of distributed gradient algorithms.
\newblock {\em SIAM Journal on Optimization}, 30(4):3029--3068, 2020.

\bibitem{LoScu15}
Paolo Di~Lorenzo and Gesualdo Scutari.
\newblock Distributed nonconvex optimization over networks.
\newblock In {\em 2015 IEEE 6th International Workshop on Computational Advances in Multi-Sensor Adaptive Processing (CAMSAP)}, pages 229--232. IEEE, 2015.

\bibitem{di2016next}
Paolo Di~Lorenzo and Gesualdo Scutari.
\newblock Next: In-network nonconvex optimization.
\newblock {\em IEEE Transactions on Signal and Information Processing over Networks}, 2(2):120--136, 2016.

\bibitem{frankel2015splitting}
Pierre Frankel, Guillaume Garrigos, and Juan Peypouquet.
\newblock Splitting methods with variable metric for kurdyka--{\l}ojasiewicz functions and general convergence rates.
\newblock {\em Journal of Optimization Theory and Applications}, 165:874--900, 2015.

\bibitem{DaMi17}
Davood Hajinezhad and Mingyi Hong.
\newblock Perturbed proximal primal--dual algorithm for nonconvex nonsmooth optimization.
\newblock {\em Mathematical Programming}, 176(1):207--245, 2019.

\bibitem{hong2017prox}
Mingyi Hong, Davood Hajinezhad, and Ming-Min Zhao.
\newblock Prox-pda: The proximal primal-dual algorithm for fast distributed nonconvex optimization and learning over networks.
\newblock In {\em International Conference on Machine Learning}, pages 1529--1538. PMLR, 2017.

\bibitem{MZM16}
Mingyi Hong, Zhi-Quan Luo, and Meisam Razaviyayn.
\newblock Convergence analysis of alternating direction method of multipliers for a family of nonconvex problems.
\newblock {\em SIAM Journal on Optimization}, 26(1):337--364, 2016.

\bibitem{hong2022divergence}
Mingyi Hong, Siliang Zeng, Junyu Zhang, and Haoran Sun.
\newblock On the divergence of decentralized nonconvex optimization.
\newblock {\em SIAM Journal on Optimization}, 32(4):2879--2908, 2022.

\bibitem{jin2017escape}
Chi Jin, Rong Ge, Praneeth Netrapalli, Sham~M Kakade, and Michael~I Jordan.
\newblock How to escape saddle points efficiently.
\newblock In {\em International conference on machine learning}, pages 1724--1732. PMLR, 2017.

\bibitem{Ku98}
Krzysztof Kurdyka.
\newblock On gradients of functions definable in o-minimal structures.
\newblock In {\em Annales de l'institut Fourier}, volume~48, pages 769--783, 1998.

\bibitem{lau2018proximal}
Tim Tsz-Kit Lau, Jinshan Zeng, Baoyuan Wu, and Yuan Yao.
\newblock A proximal block coordinate descent algorithm for deep neural network training.
\newblock {\em arXiv preprint arXiv:1803.09082}, 2018.

\bibitem{lee2019first}
Jason~D Lee, Ioannis Panageas, Georgios Piliouras, Max Simchowitz, Michael~I Jordan, and Benjamin Recht.
\newblock First-order methods almost always avoid strict saddle points.
\newblock {\em Mathematical programming}, 176:311--337, 2019.

\bibitem{lee2016gradient}
Jason~D Lee, Max Simchowitz, Michael~I Jordan, and Benjamin Recht.
\newblock Gradient descent only converges to minimizers.
\newblock In {\em Conference on learning theory}, pages 1246--1257. PMLR, 2016.

\bibitem{li2018calculus}
Guoyin Li and Ting~Kei Pong.
\newblock Calculus of the exponent of kurdyka--{\l}ojasiewicz inequality and its applications to linear convergence of first-order methods.
\newblock {\em Foundations of computational mathematics}, 18(5):1199--1232, 2018.

\bibitem{li2017convergence}
Qunwei Li, Yi~Zhou, Yingbin Liang, and Pramod~K Varshney.
\newblock Convergence analysis of proximal gradient with momentum for nonconvex optimization.
\newblock In {\em International Conference on Machine Learning}, pages 2111--2119. PMLR, 2017.

\bibitem{ZWM19}
Zhi Li, Wei Shi, and Ming Yan.
\newblock A decentralized proximal-gradient method with network independent step-sizes and separated convergence rates.
\newblock {\em IEEE Transactions on Signal Processing}, 67(17):4494--4506, 2019.

\bibitem{SLG19}
Shu Liang, George Yin, et~al.
\newblock Exponential convergence of distributed primal--dual convex optimization algorithm without strong convexity.
\newblock {\em Automatica}, 105:298--306, 2019.

\bibitem{liu2019quadratic}
Huikang Liu, Anthony Man-Cho So, and Weijie Wu.
\newblock Quadratic optimization with orthogonality constraint: explicit {\l}ojasiewicz exponent and linear convergence of retraction-based line-search and stochastic variance-reduced gradient methods.
\newblock {\em Mathematical Programming}, 178(1):215--262, 2019.

\bibitem{Lo63}
Stanislaw Lojasiewicz.
\newblock Une propri{\'e}t{\'e} topologique des sous-ensembles analytiques r{\'e}els.
\newblock {\em Les {\'e}quations aux d{\'e}riv{\'e}es partielles}, 117:87--89, 1963.

\bibitem{Nedic_Olshevsky_Rabbat2018}
A.~Nedi\'{c}, A.~Olshevsky, and M.~G. Rabbat.
\newblock Network topology and communication-computation tradeoffs in decentralized optimization.
\newblock {\em Proceedings of the IEEE}, 106:953--976, September 2018.

\bibitem{ochs2014ipiano}
Peter Ochs, Yunjin Chen, Thomas Brox, and Thomas Pock.
\newblock ipiano: Inertial proximal algorithm for nonconvex optimization.
\newblock {\em SIAM Journal on Imaging Sciences}, 7(2):1388--1419, 2014.

\bibitem{PL18}
SH~Pan and YL~Liu.
\newblock Metric subregularity of subdifferential and kl property of exponent 1/2.
\newblock {\em arXiv preprint arXiv:1812.00558}, 2019.

\bibitem{qian2023convergence}
Yitian Qian and Shaohua Pan.
\newblock Convergence of a class of nonmonotone descent methods for kurdyka--{\l}ojasiewicz optimization problems.
\newblock {\em SIAM Journal on Optimization}, 33(2):638--651, 2023.

\bibitem{qiu2024kl}
Junwen Qiu, Bohao Ma, Xiao Li, and Andre Milzarek.
\newblock A kl-based analysis framework with applications to non-descent optimization methods.
\newblock {\em arXiv preprint arXiv:2406.02273}, 2024.

\bibitem{rockafellar1998variational}
RT~Rockafellar and RJB Wets.
\newblock Variational analysis springer-verlag.
\newblock {\em New Y ork}, 1998.

\bibitem{scaman2017optimal}
Kevin Scaman, Francis Bach, S{\'e}bastien Bubeck, Yin~Tat Lee, and Laurent Massouli{\'e}.
\newblock Optimal algorithms for smooth and strongly convex distributed optimization in networks.
\newblock In {\em international conference on machine learning}, pages 3027--3036. PMLR, 2017.

\bibitem{scutari2019distributed}
Gesualdo Scutari and Ying Sun.
\newblock Distributed nonconvex constrained optimization over time-varying digraphs.
\newblock {\em Mathematical Programming}, 176:497--544, 2019.

\bibitem{shi2015extra}
Wei Shi, Qing Ling, Gang Wu, and Wotao Yin.
\newblock Extra: An exact first-order algorithm for decentralized consensus optimization.
\newblock {\em SIAM Journal on Optimization}, 25(2):944--966, 2015.

\bibitem{WQGW15}
Wei Shi, Qing Ling, Gang Wu, and Wotao Yin.
\newblock A proximal gradient algorithm for decentralized composite optimization.
\newblock {\em IEEE Transactions on Signal Processing}, 63(22):6013--6023, 2015.

\bibitem{SunDanScu19}
Ying Sun, Gesualdo Scutari, and Amir Daneshmand.
\newblock Distributed optimization based on gradient tracking revisited: Enhancing convergence rate via surrogation.
\newblock {\em SIAM Journal on Optimization}, 32(2):354--385, 2022.

\bibitem{tang2018d}
Hanlin Tang, Xiangru Lian, Ming Yan, Ce~Zhang, and Ji~Liu.
\newblock D2: Decentralized training over decentralized data.
\newblock In {\em International Conference on Machine Learning}, pages 4848--4856. PMLR, 2018.

\bibitem{tatarenko2017non}
Tatiana Tatarenko and Behrouz Touri.
\newblock Non-convex distributed optimization.
\newblock {\em IEEE Transactions on Automatic Control}, 62(8):3744--3757, 2017.

\bibitem{Ye20}
Ye~Tian, Ying Sun, and Gesualdo Scutari.
\newblock Asynchronous decentralized successive convex approximation.
\newblock {\em arXiv preprint arXiv:1909.10144}, 2019.

\bibitem{wai2017decentralized}
Hoi-To Wai, Jean Lafond, Anna Scaglione, and Eric Moulines.
\newblock Decentralized frank--wolfe algorithm for convex and nonconvex problems.
\newblock {\em IEEE Transactions on Automatic Control}, 62(11):5522--5537, 2017.

\bibitem{xiao2005scheme}
Lin Xiao, Stephen Boyd, and Sanjay Lall.
\newblock A scheme for robust distributed sensor fusion based on average consensus.
\newblock In {\em IPSN 2005. Fourth International Symposium on Information Processing in Sensor Networks, 2005.}, pages 63--70. IEEE, 2005.

\bibitem{XSTKT19}
Xinlei Yi, Shengjun Zhang, Tao Yang, Tianyou Chai, and Karl~H Johansson.
\newblock Exponential convergence for distributed optimization under the restricted secant inequality condition.
\newblock {\em IFAC-PapersOnLine}, 53(2):2672--2677, 2020.

\bibitem{XSTTK20}
Xinlei Yi, Shengjun Zhang, Tao Yang, Tianyou Chai, and Karl~H Johansson.
\newblock Linear convergence of first-and zeroth-order primal--dual algorithms for distributed nonconvex optimization.
\newblock {\em IEEE Transactions on Automatic Control}, 67(8):4194--4201, 2021.

\bibitem{JW18}
Jinshan Zeng and Wotao Yin.
\newblock On nonconvex decentralized gradient descent.
\newblock {\em IEEE Transactions on signal processing}, 66(11):2834--2848, 2018.

\bibitem{zhou2017characterization}
Yi~Zhou and Yingbin Liang.
\newblock Characterization of gradient dominance and regularity conditions for neural networks.
\newblock {\em arXiv preprint arXiv:1710.06910}, 2017.

\bibitem{zhu2012approximate}
Minghui Zhu and Sonia Mart{\'\i}nez.
\newblock An approximate dual subgradient algorithm for multi-agent non-convex optimization.
\newblock {\em IEEE Transactions on Automatic Control}, 58(6):1534--1539, 2012.

\bibitem{S04}
Sanjo Zlobec.
\newblock Jensen's inequality for nonconvex functions.
\newblock {\em Mathematical Communications}, 9(2):119--124, 2004.

\end{thebibliography}

\appendix

\section{Appendix for the rate analysis}
\label{subsec:A}
This appendix contains some definitions and lemmata established in the literature, and  used in our analysis of convergence rate. The proofs of the lemmata are presented for the sake of completeness.
 
\begin{lemma}
    \label{lemma:ell_p equivalence}
   For $x\in\mathbb{R}^d$ and $\theta\in (0,1)$, the following hold: \begin{enumerate}
       \item[(i)] If $\theta\leq \frac{1}{2}$, then $\|x\|_{1/\theta}\leq \|x\|_2$;
       \item[(ii)] If $\theta>\frac{1}{2}$, then $\|x\|_{1/\theta}\leq d^{\theta-1/2}\|x\|_2.$
   \end{enumerate}
\end{lemma}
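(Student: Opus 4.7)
The plan is to prove both parts using classical facts about monotonicity and equivalence of $\ell_p$-norms on $\mathbb{R}^d$; the statement is simply a special instance of these facts with $p=1/\theta$ and comparison norm $\ell_2$.

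For part (i), since $\theta\in(0,1/2]$ we have $1/\theta \geq 2$, so the plan is to invoke the well-known monotonicity property: for any $x\in\mathbb{R}^d$ and $1\leq q\leq p\leq\infty$, $\|x\|_p\leq\|x\|_q$. Applying this with $q=2$ and $p=1/\theta$ gives the conclusion immediately. A quick self-contained justification, in case one prefers not to cite this directly, is to normalize by writing $z_i := |x_i|/\|x\|_2$; then $\sum_i z_i^2=1$, so $z_i\in[0,1]$ and $z_i^{1/\theta}\leq z_i^2$ coordinate-wise (since $1/\theta\geq 2$ and $z_i\leq 1$), and summing yields $\sum_i z_i^{1/\theta}\leq 1$, which rearranges to $\|x\|_{1/\theta}\leq\|x\|_2$.

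For part (ii), since $\theta\in(1/2,1)$ we have $1\leq 1/\theta<2$, so I would use Hölder's inequality in the form that gives the reverse inclusion of $\ell_p$-norms, paying a dimensional penalty. Specifically, for $1\leq q\leq p\leq\infty$ and $x\in\mathbb{R}^d$,
\begin{equation*}
\|x\|_q \leq d^{1/q-1/p}\,\|x\|_p,
\end{equation*}
which follows from applying Hölder with conjugate exponents $p/q$ and $p/(p-q)$ to $\sum_i |x_i|^q\cdot 1$. Plugging in $q=1/\theta$ and $p=2$ yields exponent $1/q-1/p = \theta-1/2$, so $\|x\|_{1/\theta}\leq d^{\theta-1/2}\|x\|_2$, as desired.

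There is no real obstacle here; the lemma is a textbook fact. The only thing to be careful about is bookkeeping the boundary case $\theta=1/2$, which belongs to part (i) and where both bounds coincide (the constant is $d^0=1$), so the two parts agree at the interface. I would present the proof in two short paragraphs, one per case, with a single line of justification each, since no novel argument is required.
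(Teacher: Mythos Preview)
Your proposal is correct and mirrors the paper's own proof: the paper states the single Hölder-based inequality $\|x\|_p\leq\|x\|_r\leq d^{1/r-1/p}\|x\|_p$ for $0<r<p$ and then specializes exactly as you do (taking $p=1/\theta$, $r=2$ for (i) and $p=2$, $r=1/\theta$ for (ii)). Your extra normalization argument for (i) is a harmless elaboration, but otherwise the approaches are identical.
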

\begin{proof}
    By the H{\"o}lder's inequality, we have for any  $0<r<p$, \begin{equation}\label{eq:p_q_norm}\|x\|_p\leq\|x\|_r\leq d^{1/r-1/p}\|x\|_p.\end{equation}
   The inequalities in (i) and (ii)  follow from \eqref{eq:p_q_norm},  setting therein      $p= {1}/{\theta}$ and $r=2$ for (i),  and $p=2$, $r={1}/{\theta}$ for (i).  
\end{proof}
\begin{lemma}[Exponent for separable sums of KL functions]
    \label{lemma:block kl}
    Let $F(X)=\sum_{i=1}^mf_i(x_i)$, where $f_i$ is a proper closed function on $\mathbb{R}^d$ and $X=[x_1,x_2,\cdots,x_m]^\top$. Suppose further that for each $i$, $f_i$ is continuous on $\text{dom}\partial f_i$ and satisfies KL property (Def.~\ref{def-KL}) at $\bar x_i$ with exponent $\theta
    \in[0,1)$ and parameters $(\kappa_i,\eta_i)$. Then $F$ satisfies KL property (Def.~\ref{def-KL}) at $\bar X=[\bar x_1,\bar x_2,\cdots,\bar x_m]^\top$ with exponent $\theta$ and parameters $(\kappa/\max\{d^{\theta-\frac{1}{2}},1\},1)$ with $\kappa=\max_i\kappa_i$. 
\end{lemma}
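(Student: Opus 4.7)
The plan is to exploit the block-separable structure of $F$. Since $F(X)=\sum_{i=1}^m f_i(x_i)$, the limiting subdifferential decomposes as a Cartesian product, $\partial F(X)=\partial f_1(x_1)\times\cdots\times\partial f_m(x_m)$, so every $G\in\partial F(X)$ has rows $g_i\in\partial f_i(x_i)$ and $\|G\|^2=\sum_{i=1}^m\|g_i\|^2$. This is the identity that will let me assemble the per-block KL estimates into a global KL estimate for $F$ at $\bar X$.

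With this in hand, I would set up the KL neighborhood of $\bar X$ and track signs carefully. Using continuity of each $f_i$ on $\mathrm{dom}\,\partial f_i$ together with the individual KL neighborhoods $E_i$ of $\bar x_i$, choose a neighborhood $E$ of $\bar X$ small enough that, for any $X\in E$ with $G\in\partial F(X)$, every $x_i$ lies in $E_i$ and $|f_i(x_i)-f_i(\bar x_i)|<\eta_i$. Then pick $X\in E$ satisfying $F(\bar X)<F(X)<F(\bar X)+1$ and set $\Delta_i:=f_i(x_i)-f_i(\bar x_i)$. The main obstacle is that some $\Delta_i$ may be non-positive even when $\sum_i\Delta_i>0$, which blocks a blockwise invocation of KL at every index. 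I resolve this by splitting $[m]=I_+\cup I_-$ with $I_+:=\{i:\Delta_i>0\}$ and retaining only the trivial estimate $\|g_i\|\ge 0$ on $I_-$; since $\sum_{i\in I_+}\Delta_i\ge F(X)-F(\bar X)>0$, the positive part is nonempty and carries the whole budget.

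To close the argument, I would glue the block inequalities via the $\ell_p$-equivalence of Lemma~\ref{lemma:ell_p equivalence}. Applying KL to each $f_i$ on $I_+$ yields $\|G\|^2\ge\kappa_{\min}^2\sum_{i\in I_+}\Delta_i^{2\theta}$, where $\kappa_{\min}:=\min_i\kappa_i$. Introduce the auxiliary vector $v\in\mathbb{R}^m$ defined by $v_i:=\Delta_i^\theta$ on $I_+$ and $v_i:=0$ on $I_-$, so that $\|G\|\ge\kappa_{\min}\|v\|_2$ while $\|v\|_{1/\theta}^{1/\theta}=\sum_{i\in I_+}\Delta_i\ge F(X)-F(\bar X)$, giving $(F(X)-F(\bar X))^\theta\le\|v\|_{1/\theta}$. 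Invoking Lemma~\ref{lemma:ell_p equivalence} on $v\in\mathbb{R}^m$ yields $\|v\|_{1/\theta}\le\max\{m^{\theta-1/2},1\}\|v\|_2$; chaining the three estimates produces the desired KL bound at $\bar X$, with constant $\kappa_{\min}/\max\{m^{\theta-1/2},1\}$ and slice width $\eta=1$. The corner case $\theta=0$ is dispatched separately by picking any $i\in I_+$ and using $\|G\|\ge\|g_i\|\ge\kappa_i\ge\kappa_{\min}$, which matches the claim because $\max\{m^{-1/2},1\}=1$. I note that in the paper's application all $f_i$ coincide, so $\min_i\kappa_i=\max_i\kappa_i=\kappa$ and the number of blocks $m$ plays the role of the dimension appearing in the statement's constant.
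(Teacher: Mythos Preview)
Your argument is correct and follows exactly the route the paper points to: it is the Li--Pong separable-sum argument (Theorem~3.3 in \cite{li2018calculus}) with the explicit constant computed via Lemma~\ref{lemma:ell_p equivalence}. Your handling of the sign issue through the split $I_+\cup I_-$, the auxiliary vector $v$, and the separate treatment of $\theta=0$ are all as in that reference.

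You also correctly flag two apparent slips in the lemma as stated. The norm-equivalence step is applied to a vector with one entry per \emph{block}, so the dimensional factor should be $\max\{m^{\theta-1/2},1\}$ rather than $\max\{d^{\theta-1/2},1\}$; and the uniform lower bound across blocks requires $\kappa_{\min}=\min_i\kappa_i$, not $\max_i\kappa_i$. As you observe, in the paper's only use of the lemma (the proof of Lemma~\ref{lemma:U KL}) all $f_i$ coincide with $u$, so both discrepancies are invisible there; but they would matter in a genuinely heterogeneous setting.
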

\begin{proof}
See the proof of Theorem 3.3 in~\cite{li2018calculus} with minor variation and calculate the explicit expression of KL coefficient of $F$ using Lemma~\ref{lemma:ell_p equivalence}. 
\end{proof}
\begin{lemma}
    \label{lemma:MVT}
    For $0\leq b\leq a$, and $\theta<1$, we have that
    $$a-b\leq \frac{1}{1-\theta}a^{\theta}(a^{1-\theta}-b^{1-\theta}).$$
\end{lemma}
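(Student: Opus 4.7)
\medskip

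The plan is to reduce the inequality to a single-variable convexity (or rather, concavity) statement about $t\mapsto t^{1-\theta}$, which is concave on $[0,\infty)$ for $\theta\in[0,1)$ since its second derivative $-\theta(1-\theta)\,t^{-1-\theta}$ is non-positive there. The target inequality only needs to be established for $\theta\in[0,1)$; the degenerate case $\theta=0$ collapses to the identity $a-b=a-b$, so we may assume $\theta\in(0,1)$ and $a>0$ (the case $a=0$ forces $b=0$ and both sides vanish).

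First, I would apply the supporting-tangent characterization of concavity for $g(t):=t^{1-\theta}$ at the point $a>0$: for any $b\ge 0$,
\begin{equation*}
g(b)\le g(a)+g'(a)(b-a),\qquad g'(a)=(1-\theta)a^{-\theta}.
\end{equation*}
Rearranging gives $(1-\theta)a^{-\theta}(a-b)\le a^{1-\theta}-b^{1-\theta}$. Multiplying through by $a^{\theta}/(1-\theta)>0$ produces exactly the claimed bound
\begin{equation*}
a-b\le \frac{1}{1-\theta}\,a^{\theta}\bigl(a^{1-\theta}-b^{1-\theta}\bigr).
\end{equation*}

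A second route, if one prefers to avoid invoking concavity directly, is via the fundamental theorem of calculus: write $a^{1-\theta}-b^{1-\theta}=\int_{b}^{a}(1-\theta)t^{-\theta}\,dt$, observe that $t^{-\theta}$ is non-increasing on $(0,\infty)$ for $\theta\in[0,1)$, and therefore lower-bound the integrand pointwise by its value at the right endpoint $a$, yielding $a^{1-\theta}-b^{1-\theta}\ge(1-\theta)a^{-\theta}(a-b)$, which rearranges the same way. Both routes are essentially the same computation; there is no real obstacle since the inequality is just the tangent-line bound for a concave power function. I would present the concavity version, as it makes clear why the constant $1/(1-\theta)$ and the factor $a^{\theta}$ arise naturally from the derivative of $t^{1-\theta}$ at $t=a$.
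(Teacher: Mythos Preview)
Your proof is correct and essentially the same as the paper's: the paper applies the mean value theorem to $\Phi(x)=x^{1-\theta}$ and uses that $\Phi'$ is decreasing to obtain $\Phi(a)-\Phi(b)\ge\Phi'(a)(a-b)$, which is exactly your concavity tangent-line inequality $g(a)-g(b)\ge g'(a)(a-b)$, and both then rearrange identically.
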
 
\begin{proof}
    Define $\Phi(x):x^{1-\theta}$. Then, $\Phi'(x)=(1-\theta)x^{-\theta}$ is decreasing for $x>0$. By mean value theorem,  there exists $c\in[b,a]$ such that
    $$\begin{aligned}\Phi(a)-\Phi(b)= & \,\Phi'(c)(a-b)
    \geq \Phi'(a)(a-b)
    .\end{aligned}$$
    $$\Rightarrow a-b\leq \frac{1}{1-\theta}a^{\theta}(a^{1-\theta}-b^{1-\theta}).\vspace{-0.4cm} $$
\end{proof}
\begin{lemma}
    \label{lemma:G-D}
    Consider Problem~(\ref{problem}) under Assumption~\ref{ass:function}. Let $\{X^{\nu}\}_{\nu\in \mathbb{N}}$ be the sequence generated by the SONATA algorithm~\eqref{eq:A1}-\eqref{eq:A3}, under Assumption~\ref{ass:W}. Then there exists $G^{\nu} \in\partial U(X^{\nu+1/2}) $ such that 
\begin{equation*}
    \begin{aligned}
\|G^{\nu}\|^2
        \leq&3\left(L^2+\frac{1}{\alpha^2}\right)\|D^{\nu}\|^2+3\mathcal{E}^{\nu}.
    \end{aligned}
\end{equation*}
\end{lemma}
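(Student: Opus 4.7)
The plan is to construct the subgradient $G^\nu$ explicitly from the optimality condition of the proximal step and then bound it term-by-term. Since $U(X)=\sum_i u(x_i)=\sum_i[f(x_i)+r(x_i)]$, any matrix whose $i$-th row lies in $\partial u(x_i^{\nu+1/2})=\nabla f(x_i^{\nu+1/2})+\partial r(x_i^{\nu+1/2})$ is an element of $\partial U(X^{\nu+1/2})$. The prox update \eqref{eq:A1} gives
$$-y_i^\nu-\tfrac{1}{\alpha}(x_i^{\nu+1/2}-x_i^\nu)\in\partial r(x_i^{\nu+1/2}),$$
so I would define $G^\nu$ row-wise by $g_i^\nu:=\nabla f(x_i^{\nu+1/2})-y_i^\nu-\tfrac{1}{\alpha}(x_i^{\nu+1/2}-x_i^\nu)$. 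In matrix form, letting $\widetilde{\nabla F}(X)$ denote the matrix with rows $\nabla f(x_i)$,
$$G^\nu = \widetilde{\nabla F}(X^{\nu+1/2})-Y^\nu-\tfrac{1}{\alpha}D^\nu,$$
which lies in $\partial U(X^{\nu+1/2})$.

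Next I would split $G^\nu$ using the tracking error identity $\Delta^\nu=\widetilde{\nabla F}(X^\nu)-Y^\nu$:
$$G^\nu = \bigl[\widetilde{\nabla F}(X^{\nu+1/2})-\widetilde{\nabla F}(X^\nu)\bigr]+\Delta^\nu-\tfrac{1}{\alpha}D^\nu.$$
Applying $\|a+b+c\|^2\le 3(\|a\|^2+\|b\|^2+\|c\|^2)$ gives
$$\|G^\nu\|^2\le 3\bigl\|\widetilde{\nabla F}(X^{\nu+1/2})-\widetilde{\nabla F}(X^\nu)\bigr\|^2+3\|\Delta^\nu\|^2+\tfrac{3}{\alpha^2}\|D^\nu\|^2.$$

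For the first term, I would use that $\nabla f=\tfrac{1}{m}\sum_i\nabla f_i$ is $L$-Lipschitz with $L=\tfrac{1}{m}\sum_i L_i$ (by the triangle inequality on the $L_i$-Lipschitz gradients), so summing row-wise yields $\|\widetilde{\nabla F}(X^{\nu+1/2})-\widetilde{\nabla F}(X^\nu)\|^2\le L^2\|D^\nu\|^2$. For the second term, I invoke the existing bound \eqref{eq:tr dynm}, namely $\|\Delta^\nu\|^2\le 2\|Y_\perp^\nu\|^2+4L_{\text{mx}}^2\|X_\perp^\nu\|^2$, and observe that with the fixed choice $c_1=2$, $c_2=4L_{\text{mx}}^2$ adopted just before \eqref{tc1}, the right-hand side is exactly $\mathcal{E}^\nu$. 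Combining everything,
$$\|G^\nu\|^2\le 3L^2\|D^\nu\|^2+3\mathcal{E}^\nu+\tfrac{3}{\alpha^2}\|D^\nu\|^2=3\bigl(L^2+\tfrac{1}{\alpha^2}\bigr)\|D^\nu\|^2+3\mathcal{E}^\nu,$$
as claimed.

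This proof is essentially a bookkeeping exercise; there is no real obstacle. The only point deserving care is the identification of $\mathcal{E}^\nu$ with the dominating combination in \eqref{eq:tr dynm}, which hinges on the specific choice of weights $c_1,c_2$ fixed earlier in Section~\ref{sec:asymptotic convergence}. Everything else follows from the prox optimality condition, the triangle/Young's inequality, and the $L$-smoothness of the common $f$.
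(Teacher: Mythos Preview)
Your proposal is correct and follows exactly the same approach as the paper: construct $G^\nu$ from the optimality condition of the proximal step \eqref{eq:A1}, then bound $\|G^\nu\|^2$ by splitting $\nabla f(x_i^{\nu+1/2})-y_i^\nu-\tfrac{1}{\alpha}d_i^\nu$ into three terms and invoking $L$-smoothness of $f$ together with the tracking-error bound \eqref{eq:tr dynm} (which, under the fixed choice $c_1=2$, $c_2=4L_{\text{mx}}^2$, coincides with $\mathcal{E}^\nu$). The paper's proof is simply a terser version of yours, jumping directly from the expression for $\|G^\nu\|^2$ to the final bound without spelling out the three-term decomposition.
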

\begin{proof}
    By (\ref{eq:A1}), there exists $J^{\nu }\in\partial R(X^{\nu+1/2})$  such that 
 $$J^{\nu}+\frac{1}{\alpha}(X^{\nu+1/2}-X^{\nu}+\alpha Y^{\nu})=0.
$$  

That is, there exists $G^{\nu} \in\partial U(X^{\nu+1/2}) $ such that 
\begin{equation*}
    \begin{aligned}
\|G^{\nu}\|^2=&\sum_{i=1}^m\left\|\nabla f(x_i^{\nu+1/2})-y_i^{\nu}-\frac{1}{\alpha}(x_i^{\nu+1/2}-x_i^{\nu})\right\|^2\\
        \leq&3\left(L^2+\frac{1}{\alpha^2}\right)\|D^{\nu}\|^2+3\mathcal{E}^{\nu}.
    \end{aligned}
\end{equation*}
\end{proof}
\begin{lemma}[Th. 2 in \cite{attouch2009convergence}]
    \label{lemma:sublinear}
    Let  $\{\mathcal{S}^{\nu}\}$ be a nonnegative sequence satisfying    
$$(\mathcal{S}^{\nu})^{\frac{\theta}{1-\theta}}\leq c(\mathcal{S}^{\nu-1}-\mathcal{S}^{\nu}),\quad \forall \nu\geq \nu_0,$$  
 there exists $c'>0$ such that 
$$\mathcal{S}^{\nu}\leq c'\nu^{-\frac{1-\theta}{2\theta-1}}, \quad \forall \nu\geq\nu_0.$$
\end{lemma}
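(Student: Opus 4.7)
\textbf{Proof plan for Lemma~\ref{lemma:sublinear}.}

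The plan is to follow the classical Attouch--Bolte telescoping argument adapted to this one-step recursion. First I would observe that, since $(\mathcal{S}^{\nu})^{\theta/(1-\theta)}\ge 0$, the hypothesis forces $\mathcal{S}^{\nu-1}\ge \mathcal{S}^\nu$ for all $\nu\ge\nu_0$, so $\{\mathcal{S}^\nu\}$ is nonincreasing (and we may assume $\mathcal{S}^\nu>0$, since the statement is trivial once the sequence hits zero). Setting $\alpha:=\theta/(1-\theta)$, note $\alpha>1$ because $\theta\in(1/2,1)$, and $1/(\alpha-1)=(1-\theta)/(2\theta-1)$, which is exactly the exponent appearing in the target bound. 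The natural potential to consider is
\[
H(x):=\frac{x^{1-\alpha}}{\alpha-1},\qquad x>0,
\]
which is strictly positive, strictly decreasing, and satisfies $H'(x)=-x^{-\alpha}$. The core step is to prove that there exists a constant $\mu>0$, independent of $\nu$, such that
\[
H(\mathcal{S}^{\nu})-H(\mathcal{S}^{\nu-1})\;\ge\;\mu,\qquad \forall\,\nu\ge\nu_0.
\]

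To obtain this uniform step, I would fix a threshold $R>1$ (to be chosen) and split into two complementary cases. In the regime $\mathcal{S}^{\nu}\ge \mathcal{S}^{\nu-1}/R$ (moderate decrease), the mean value theorem applied to $H$ on $[\mathcal{S}^\nu,\mathcal{S}^{\nu-1}]$ gives
\[
H(\mathcal{S}^{\nu})-H(\mathcal{S}^{\nu-1}) \;\ge\; (\mathcal{S}^{\nu-1})^{-\alpha}\bigl(\mathcal{S}^{\nu-1}-\mathcal{S}^{\nu}\bigr)
\;\ge\;\frac{(\mathcal{S}^{\nu}/\mathcal{S}^{\nu-1})^{\alpha}}{c}\;\ge\;\frac{1}{c\,R^{\alpha}},
\]
where the middle inequality uses the hypothesis $(\mathcal{S}^{\nu})^{\alpha}\le c(\mathcal{S}^{\nu-1}-\mathcal{S}^{\nu})$. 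In the complementary regime $\mathcal{S}^{\nu}<\mathcal{S}^{\nu-1}/R$ (fast decrease), I would instead bound $H(\mathcal{S}^{\nu})-H(\mathcal{S}^{\nu-1})$ directly by monotonicity of $x\mapsto x^{1-\alpha}$ (recalling $1-\alpha<0$):
\[
H(\mathcal{S}^{\nu})-H(\mathcal{S}^{\nu-1}) \;\ge\; \frac{R^{\alpha-1}-1}{\alpha-1}\,(\mathcal{S}^{\nu-1})^{1-\alpha}
\;\ge\;\frac{R^{\alpha-1}-1}{\alpha-1}\,(\mathcal{S}^{\nu_0})^{1-\alpha},
\]
the last step using that $\mathcal{S}^{\nu-1}\le \mathcal{S}^{\nu_0}$ and $1-\alpha<0$. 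Taking $\mu$ to be the minimum of these two lower bounds completes the key step.

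Telescoping the inequality $H(\mathcal{S}^{\nu})-H(\mathcal{S}^{\nu-1})\ge \mu$ from $\nu_0+1$ to $\nu$ yields
\[
(\mathcal{S}^{\nu})^{1-\alpha}\;\ge\;(\alpha-1)\mu\,(\nu-\nu_0)\;+\;(\mathcal{S}^{\nu_0})^{1-\alpha}
\;\ge\;(\alpha-1)\mu\,(\nu-\nu_0),
\]
and since $1-\alpha<0$, raising both sides to the power $1/(1-\alpha)=-(1-\theta)/(2\theta-1)$ reverses the inequality and gives
\[
\mathcal{S}^{\nu}\;\le\; \bigl((\alpha-1)\mu\bigr)^{-(1-\theta)/(2\theta-1)}\,(\nu-\nu_0)^{-(1-\theta)/(2\theta-1)}.
\]
Absorbing the shift $\nu_0$ and the starting values into a single constant $c'>0$ delivers the claimed bound $\mathcal{S}^{\nu}\le c'\nu^{-(1-\theta)/(2\theta-1)}$ for all $\nu\ge\nu_0$.

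The only delicate point is the case split: one must choose $R$ so that \emph{both} lower bounds on $H(\mathcal{S}^\nu)-H(\mathcal{S}^{\nu-1})$ are strictly positive and \emph{uniform} in $\nu$. The second-case bound crucially uses that $(\mathcal{S}^{\nu-1})^{1-\alpha}$ is bounded \emph{below} by a positive constant, which follows from the fact that $\mathcal{S}^{\nu-1}$ stays bounded \emph{above} (by $\mathcal{S}^{\nu_0}$) together with $1-\alpha<0$; this is the subtle direction of monotonicity that makes the argument go through. Everything else is bookkeeping around a standard telescoping identity.
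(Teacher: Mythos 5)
Your argument is correct: the case split on the ratio $\mathcal{S}^{\nu}/\mathcal{S}^{\nu-1}$, the uniform lower bound on the increments of $H(x)=x^{1-\alpha}/(\alpha-1)$, and the telescoping all go through (with the implicit standing assumption $\theta\in(1/2,1)$ so that $\alpha>1$). The paper itself gives no proof of this lemma --- it simply cites Theorem~2 of Attouch--Bolte --- and your write-up is a faithful, self-contained rendition of exactly that classical argument, so there is nothing further to reconcile.
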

\section{Appendix for the KL property}
\label{subsec:B}
This appendix  introduces a stronger version of KL property   than that in Definition~\ref{def-KL-2},  which allows SONATA to match the convergence rate results of the centralized proximal gradient, when the KL exponent of $u$ (according to Definition~\ref{def-KL-2}) is zero.  Specifically, we have the following.
\begin{definition}{(A stronger version of the KL property)\cite{qiu2024kl}}\label{def-KL-2}
A proper closed function $f$ satisfies the KL property at  $\bar x\in \text{dom} f$   with exponent $\theta\in[0,1)$ if there exists a neighborhood $E$ of $\bar x$ and  numbers $\kappa,\eta\in (0,\infty)$ such that 
\begin{equation}
\label{eq353'}  \|g_x\|\geq \kappa|f(x)-f(\bar x)|^{\theta},
\end{equation} for all $x\in E\cap [f(\bar x)-\eta<f<f(\bar x)+\eta]$   and   $g_x\in\partial f(x)$. We call the function $f$ a KL function with exponent $\theta$ if it satisfies the KL property at any point $\bar x\in \text{dom} \partial f$ with the same exponent $\theta$.
\end{definition}

The relationship with the original KL property in  Definition~\ref{def-KL} follows readily. 

\begin{remark}
    \label{remark:KLs}
    If a function $f$ is proper and closed. Then $f$ satisfies KL property (Def.~\ref{def-KL-2}) at $\bar x\in\text{dom}\partial f$ with exponent $\theta\in[0,1)$ if and only if both $f$ and $-f$ satisfy the original KL property (Def.~\ref{def-KL})  at $\bar x$ with some exponents $\theta_1,\theta_2\in[0,1)$ respectively, where $\theta=\max\{\theta_1,\theta_2\}$.
\end{remark}

Several functions satisfy such a KL property. 
\begin{proposition}
    \label{prop:algebr}Let $f$ be a proper and closed sub-analytic function with closed
domain, then $f$ is a KL function with exponent $\theta\in[0,1)$.
\end{proposition}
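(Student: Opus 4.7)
The plan is to reduce Proposition~\ref{prop:algebr} to the classical Kurdyka--Lojasiewicz inequality for sub-analytic functions (due to Bolte, Daniilidis, and Lewis), combined with Remark~\ref{remark:KLs}, which characterizes the two-sided property of Definition~\ref{def-KL-2} as the simultaneous validity of the one-sided property of Definition~\ref{def-KL} for both $f$ and $-f$. Consequently, the entire task reduces to verifying that both $f$ and $-f$ individually satisfy the one-sided KL inequality at every $\bar x\in\text{dom}\,\partial f$ with some uniform exponent in $[0,1)$.

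The execution proceeds in three steps. First, I apply the Bolte--Daniilidis--Lewis theorem directly to $f$: since $f$ is proper, closed (lower semicontinuous), sub-analytic, and has closed domain, there exists $\theta_1\in[0,1)$ such that for every $\bar x\in\text{dom}\,\partial f$ one can find a neighborhood $E$ and constants $\kappa_1,\eta_1>0$ for which Definition~\ref{def-KL} holds with exponent $\theta_1$. Second, I note that $-f$ is again sub-analytic (the class of sub-analytic functions is closed under multiplication by $-1$) and its domain coincides with $\text{dom}\,f$, hence is closed; re-applying the same theorem to $-f$ produces an exponent $\theta_2\in[0,1)$ such that $-f$ verifies Definition~\ref{def-KL} at every point of $\text{dom}\,\partial f$. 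Third, setting $\theta:=\max\{\theta_1,\theta_2\}\in[0,1)$ and invoking Remark~\ref{remark:KLs} pointwise at every $\bar x\in\text{dom}\,\partial f$ yields the two-sided KL inequality of Definition~\ref{def-KL-2} with the single exponent $\theta$, which is precisely the assertion of the proposition.

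The only delicate point is the second step, where the subanalytic KL inequality must be applied to $-f$: the classical statement of Bolte--Daniilidis--Lewis is phrased for lower semicontinuous functions, whereas $-f$ becomes upper semicontinuous when $f$ is closed. This is resolved by observing that the stratification-based mechanism underlying the subanalytic KL inequality (curve-selection lemma applied to the graph of $f$, together with a Lojasiewicz gradient estimate along the selected curves) is insensitive to a sign flip: super-level sets of $f$ play exactly the role of sub-level sets, and the hypograph of $f$, equivalently the epigraph of $-f$, is sub-analytic whenever the graph of $f$ is. Therefore the same argument delivers a valid exponent $\theta_2\in[0,1)$ for $-f$ on the \emph{downward} branch $f(\bar x)-\eta<f<f(\bar x)$, which is exactly the input required by Remark~\ref{remark:KLs} to upgrade the two one-sided bounds into the symmetric KL property of Definition~\ref{def-KL-2}.
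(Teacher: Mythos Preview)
Your approach is essentially the same as the paper's: both proofs invoke \cite[Th.~3.1]{bolte2007lojasiewicz} for $f$ and for $-f$ separately, and then combine the two one-sided inequalities via Remark~\ref{remark:KLs} to obtain the symmetric KL property of Definition~\ref{def-KL-2}. Your write-up is more detailed---in particular, you flag and discuss the lower/upper semicontinuity issue when passing to $-f$, which the paper's one-line proof simply glosses over---but the logical skeleton is identical.
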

\begin{proof}
By  in~\cite[Th.~3.1]{bolte2007lojasiewicz}, both $f$ and $-f$ satisfy the original KL property (Def.~\ref{def-KL}) at each $x\in\text{dom}\partial f$ with a $\theta\in[0,1)$.
\end{proof}
Furthermore,     such a stronger version of the KL     inherits most of the desirable  properties of that in Definition~\ref{def-KL}, including   exponent-preserving rules   such as composition rule \cite[Thm 3.2]{li2018calculus}, minimum rule \cite[Thm 3.1]{li2018calculus} and rule of separable sums\cite[Thm 3.3]{li2018calculus}. To be concrete, 
 with the help of several intermediate properties (Lemma~\ref{prop:nonstationary kl},~\ref{prop:luo-tseng},~\ref{prop:min KL}) proved below, we provide next two classes of functions satisfying Definition~\ref{def-KL-2} with exponent $\theta=1/2$ (see Theorem~\ref{thm:convex} and Theorem~\ref{thm:nonconvex} below). This also shows that  
all the illustrative examples discussed  in Sec.~\ref{sub-examples} satisfy Definition~\ref{def-KL-2} with the {\it same}  exponents $\theta$ as in Definition~\ref{def-KL}  (see Remark~\ref{remark:examples}). 

Unless otherwise specified, in the following when calling  the KL property (or a KL function), we refer to  to the stronger Definition~\ref{def-KL-2}. 
 
\begin{lemma}
\label{prop:nonstationary kl}
    Suppose $f$ is a proper closed function, $\bar x\in\text{dom}$ $\partial f$ and $\bar x$ is not a stationary point ($0\notin\partial f(\bar x)$). Then for any $\theta\in[0,1)$, $f$ satisfies the KL property at $\bar x$ with exponent $\theta$.
\end{lemma}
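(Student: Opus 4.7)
The plan is to use the closedness of the graph of the limiting subdifferential $\partial f$ to show that if $0\notin\partial f(\bar x)$, then in some $f$-attentive neighborhood of $\bar x$ the subgradients are bounded away from zero in norm; the KL inequality then follows trivially by choosing $\kappa$ small enough. The statement is essentially a ``trivial case'' observation: the KL property can only fail at critical points, so at noncritical points any exponent works.

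More precisely, I would proceed by contradiction. Suppose the KL property fails at $\bar x$ for some fixed $\theta\in[0,1)$. Instantiating the negation with a shrinking family of neighborhoods $E_n$ (say, balls of radius $1/n$ around $\bar x$), constants $\eta_n=1/n$, and coefficients $\kappa_n=1$, I obtain a sequence $x_n\to\bar x$ with $|f(x_n)-f(\bar x)|<1/n$ and subgradients $g_n\in\partial f(x_n)$ satisfying
\[
\|g_n\|<|f(x_n)-f(\bar x)|^{\theta}\leq(1/n)^{\theta}\xrightarrow[n\to\infty]{}0.
\]
Since $x_n\to\bar x$, $f(x_n)\to f(\bar x)$, and $g_n\to 0$, the $f$-attentive closedness of the graph of the limiting subdifferential (a standard property, see e.g.\ \cite[Prop.~8.7]{rockafellar1998variational}) yields $0\in\partial f(\bar x)$, contradicting the noncriticality of $\bar x$.

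Equivalently (and more constructively), one can argue directly: by $f$-attentive outer semicontinuity of $\partial f$ at $\bar x$ and $0\notin\partial f(\bar x)$, there exist a neighborhood $E$ of $\bar x$ and $\eta,\delta>0$ such that $\|g_x\|\geq\delta$ for every $x\in E$ with $|f(x)-f(\bar x)|<\eta$ and every $g_x\in\partial f(x)$. Shrinking $\eta$ if necessary so that $\eta\leq 1$, we have $|f(x)-f(\bar x)|^{\theta}\leq\eta^{\theta}\leq 1$ on this set (with the convention $0^0=0$ covering the $\theta=0$ case), so choosing $\kappa:=\delta$ gives
\[
\|g_x\|\geq\delta\geq\kappa\,|f(x)-f(\bar x)|^{\theta}
\]
for all admissible $x$ and $g_x$, which is exactly the KL inequality of Definition~\ref{def-KL-2} with exponent $\theta$ and parameters $(\kappa,\eta)$.

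The only nontrivial ingredient is the $f$-attentive outer semicontinuity (graph-closedness) of $\partial f$ at points of $\mathrm{dom}\,\partial f$; this is a classical property of the limiting subdifferential of proper lower semicontinuous functions, so there is no real obstacle beyond invoking it. Note also that the argument is uniform in $\theta\in[0,1)$: the exponent only enters through the bound $|f(x)-f(\bar x)|^{\theta}\leq\eta^{\theta}$, which is harmless once $\eta$ is chosen small, so the same $(\kappa,\eta,E)$ work simultaneously for every $\theta\in[0,1)$.
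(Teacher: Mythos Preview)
Your proposal is correct and follows essentially the same approach as the paper (which simply cites \cite[Lemma~2.1]{li2018calculus}): use the $f$-attentive outer semicontinuity of the limiting subdifferential to show that, in an $f$-attentive neighborhood of a noncritical point, all subgradients are uniformly bounded away from zero, whence the KL inequality holds for any exponent with $\kappa$ chosen small enough. One minor remark: in your contradiction argument, the choice $\kappa_n=1$ does not force $\|g_n\|\to 0$ when $\theta=0$ (since $(1/n)^0=1$); taking $\kappa_n=1/n$ fixes this, and in any case your direct argument handles all $\theta\in[0,1)$ cleanly.
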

\begin{proof}
    See Lemma 2.1 in~\cite{li2018calculus} with a minor variation.
\end{proof}
\begin{definition}[Luo-Tseng Error Bound]~\cite[Def 2.1]{li2018calculus}
\label{def:luo-tseng}
     Consider  Problem~\eqref{problem}. Let $\mathcal{X}$ be the set of stationary points of $u$. Suppose $\mathcal{X}\neq\emptyset$. We say that the Luo-Tseng error bound holds if for any $\zeta>\inf u$, there exists $c_1,\epsilon_1>0$ such that whenever $\|\texttt{prox}_r(x-\nabla f(x))-x\|<\epsilon_1$ and $u(x)<\zeta$,
\begin{equation*}
    \text{dist}(x,\mathcal{X})\leq c_1\|\texttt{prox}_r(x-\nabla f(x))-x\|
\end{equation*}
\end{definition}
\begin{lemma}[Luo-Tseng error bound implies KL]
\label{prop:luo-tseng}
Consider the problem~\eqref{problem} under Assumption~\ref{ass:function}. Suppose that the set  $\mathcal{X}$ of   stationary points of Problem~\eqref{problem} is nonempty and for any $x^*\in\mathcal{X}$, there exists $\delta>0$ such that $u(x)=u(x^*)$ whenever $x\in\mathcal{X}$ and $\|x-x^*\|\leq\delta$. If the Luo-Tseng error bound holds for $u$, then $u$ is a KL function with exponent $\theta={1}/{2}$.
\end{lemma}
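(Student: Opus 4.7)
The plan is to invoke Remark~\ref{remark:KLs} to reduce the stronger KL property of Definition~\ref{def-KL-2} to two one-sided KL inequalities: $u$ satisfies the original KL (Definition~\ref{def-KL}) at $\bar x$ with some exponent $\theta_{1}$, and $-u$ satisfies the original KL at $\bar x$ with some exponent $\theta_{2}$, with $\max\{\theta_{1},\theta_{2}\}\le 1/2$. For any non-stationary $\bar x\in\text{dom}\,\partial u$, Lemma~\ref{prop:nonstationary kl} already delivers the stronger KL with any exponent, so the problem reduces to handling each stationary point $x^{*}\in\mathcal{X}$ separately and verifying both the ``$u$-side'' and the ``$-u$-side'' with exponent $1/2$.

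The ``$u$-side'' is classical: the Luo--Tseng error bound combined with local constancy of $u$ on $\mathcal{X}$ implies the original KL inequality at $x^{*}$ with exponent $1/2$. This is exactly Theorem~4.1 of \cite{li2018calculus}, which I would invoke directly without reproving.

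The ``$-u$-side'' is where the isolation of the $u$-value on $\mathcal{X}$ becomes indispensable. The plan is to fix $x$ in a small ball around $x^{*}$ with $u(x)<u(x^{*})$ and construct, for every $g_x\in\partial u(x)$, a lower bound of the form $\|g_x\|\ge \kappa'(u(x^{*})-u(x))^{1/2}$ from four ingredients. First, the Luo--Tseng bound supplies $x'\in\mathcal{X}$ with $\|x-x'\|\le c_{1}\|T(x)-x\|$, where $T(x):=\texttt{prox}_{r}(x-\nabla f(x))$; shrinking the neighborhood so that $\|T(x)-x\|<\epsilon_{1}$ (which is possible since $T$ is continuous and $T(x^{*})=x^{*}$) makes the bound applicable. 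Second, since $x'$ lies close to $x^{*}$, the local constancy hypothesis forces $u(x')=u(x^{*})$. Third, the $L$-smoothness of $f$, convexity of $r$, and the stationarity relation $-\nabla f(x')\in\partial r(x')$ combine into the quadratic gap estimate $u(x^{*})-u(x)=u(x')-u(x)\le (3L/2)\,\|x-x'\|^{2}$. Fourth, a standard proximal--subdifferential calculation (match the optimality condition for $T(x)$ against the decomposition $g_x=\nabla f(x)+s$, $s\in\partial r(x)$, via convexity of $r$) gives $\|T(x)-x\|\le\|g_x\|$. Chaining these inequalities yields $u(x^{*})-u(x)\le (3Lc_{1}^{2}/2)\,\|g_x\|^{2}$, the desired KL bound for $-u$ with exponent $1/2$.

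The main obstacle is precisely the ``$-u$-side''---turning the Luo--Tseng bound, which controls only the \emph{distance} from $x$ to the stationary set, into a lower bound on the function-value gap $u(x^{*})-u(x)$ of the correct order. Without the local constancy hypothesis, the stationary point $x'$ delivered by Luo--Tseng need not share the value $u(x^{*})$, so the identity $u(x^{*})-u(x)=u(x')-u(x)$ would fail and the chain would break. The isolation hypothesis thus serves as the essential bridge that makes the stronger two-sided KL property (Definition~\ref{def-KL-2}) reachable from the one-sided Luo--Tseng condition; all other steps are routine smoothness/convexity and prox-calculus manipulations.
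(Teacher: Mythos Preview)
Your proposal is correct and follows essentially the same route as the paper: the paper simply defers to the proof of \cite[Th.~4.1]{li2018calculus} ``with minor variations,'' and what you have written is precisely the natural fleshing-out of those variations---invoking Li--Pong directly for the $u$-side and then rerunning the same ingredients (Luo--Tseng bound, local constancy of $u$ on $\mathcal{X}$, descent lemma plus convexity of $r$, and prox non-expansiveness) to obtain the reverse inequality $u(x^{*})-u(x)\le (3Lc_{1}^{2}/2)\|g_x\|^{2}$ needed for the two-sided Definition~\ref{def-KL-2}. Your identification of the local-constancy hypothesis as the bridge that makes the $-u$-side go through is exactly the point.
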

\begin{proof}
    It follows for the proof of~\cite[Th. 4.1]{li2018calculus} with  minor variations.
\end{proof}
\begin{lemma}[Exponent of the minimum of finitely many KL functions]
\label{prop:min KL}
   Let $f_i$, $1\leq i\leq r$, be proper closed function with $\text{dom} f_i=\text{dom}$ $\partial f_i$ for all $i$, and $f:=\min_{1\leq i\leq r} f_i$ be continuous on $\text{dom} \partial f$. Suppose further that each $f_i$ is a KL function with exponent $\theta_i\in[0,1)$ for $1\leq i\leq r$. Then $f$ is also a KL function with exponent $\theta=\max\{\theta_i:1\leq i\leq r\}$.
\end{lemma}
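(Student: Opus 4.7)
The plan is to fix $\bar x\in\text{dom}\,\partial f$ and reduce a KL inequality for $f=\min_i f_i$ at $\bar x$ to the KL inequalities of those $f_{i_0}$ that are \emph{active} at $\bar x$, i.e., indices in $A:=\{i\in[r]:f_i(\bar x)=f(\bar x)\}$. Without loss of generality we assume $|A|<r$ is possible; the key observation is that for $x$ close enough to $\bar x$ only active indices can realize the minimum.

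First I would isolate the active indices. Since each $f_j$ is lower semicontinuous and $f_j(\bar x)>f(\bar x)$ for $j\notin A$, there exist a neighborhood $E_1$ of $\bar x$ and $\delta>0$ with $f_j(x)>f(\bar x)+\delta$ for all $x\in E_1$ and $j\notin A$. By the postulated continuity of $f$ on $\text{dom}\,\partial f$, one can shrink $E_1$ to $E_2\subseteq E_1$ so that $f(x)<f(\bar x)+\delta$ for $x\in E_2$. Hence, for any $x\in E_2$, any index $i_0$ with $f_{i_0}(x)=f(x)$ must satisfy $i_0\in A$ and, in particular, $f_{i_0}(\bar x)=f(\bar x)$.

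Next I would transfer subgradients through the minimum. If $g\in\hat\partial f(x)$ and $f(x)=f_{i_0}(x)$, then $f_{i_0}(y)\ge f(y)\ge f(x)+\langle g,y-x\rangle+o(\|y-x\|)$, so $g\in\hat\partial f_{i_0}(x)$. Passing to limits and using finiteness of the index set (to extract a subsequence along which the same active index $i_0$ is selected), one obtains $\partial f(x)\subseteq\bigcup_{i_0:\,f_{i_0}(x)=f(x)}\partial f_{i_0}(x)$. Combined with the previous step, for $x\in E_2$ every $g\in\partial f(x)$ lies in $\partial f_{i_0}(x)$ for some $i_0\in A$.

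Finally, I would invoke the KL property of each $f_{i_0}$, $i_0\in A$, at $\bar x$ (with exponent $\theta_{i_0}$, constants $\kappa_{i_0},\eta_{i_0}$). Intersect all these neighborhoods with $E_2$ to get a common $E$, and set $\eta:=\min\{1,\min_{i\in A}\eta_i\}$ and $\kappa:=\min_{i\in A}\kappa_i$. For $x\in E$ with $|f(x)-f(\bar x)|<\eta$ and any $g\in\partial f(x)$, pick an active $i_0$ so that $g\in\partial f_{i_0}(x)$ and $f_{i_0}(x)=f(x)$. Since $|f_{i_0}(x)-f_{i_0}(\bar x)|=|f(x)-f(\bar x)|<\eta\le\eta_{i_0}$, the KL inequality for $f_{i_0}$ gives
\[
\|g\|\ge\kappa_{i_0}|f(x)-f(\bar x)|^{\theta_{i_0}}\ge\kappa\,|f(x)-f(\bar x)|^{\theta},
\]
where the second inequality uses $|f(x)-f(\bar x)|\le 1$ together with $\theta_{i_0}\le\theta:=\max_i\theta_i$. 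The case $f(x)=f(\bar x)$ is covered by the convention $0^0=0$. This establishes the KL property of $f$ at $\bar x$ with exponent $\theta$; since $\bar x\in\text{dom}\,\partial f$ is arbitrary, $f$ is a KL function with this exponent.

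The main obstacle is the subgradient inclusion for the (limiting) subdifferential of a minimum; the Fréchet case is elementary as above, but the limiting case requires a careful diagonal argument and the use of lower semicontinuity of $f_{i_0}$ to ensure the limit point retains membership in the active set $A$. Everything else is standard continuity and neighborhood bookkeeping.
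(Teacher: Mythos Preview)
Your proposal is correct and follows essentially the same route as the paper's proof, which simply defers to \cite[Cor.~3.1]{li2018calculus} with the obvious adaptation to the two-sided KL inequality of Definition~\ref{def-KL-2}. Your outline is precisely that argument: restrict to the active index set via lower semicontinuity of the inactive $f_j$ and continuity of $f$, transfer limiting subgradients through the pointwise minimum, and then invoke the KL property of the active $f_{i_0}$ at $\bar x$ (which is legitimate because $\text{dom}\,f_i=\text{dom}\,\partial f_i$ ensures $\bar x\in\text{dom}\,\partial f_{i_0}$).
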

\begin{proof}
    The proof follows similar steps as that in~\cite[Cor.~3.1]{li2018calculus}.
\end{proof}

We are ready to state the two major results of this section.   
\begin{theorem}[Convex problems with convex piecewise linear-quadratic regularizers]
\label{thm:convex}
Suppose $u$ is a proper closed function taking the form
$$u(x)=\min_{1\leq r\leq r}\underbrace{l(Ax)+r_i(x)}_{u_i(x)},$$
where $A\in\mathbb{R}^{m\times n}$, $r_i$ are proper closed polyhedral functions for $i=1,\cdots,r$ and $l$ satisfies either one of the following two conditions:
\begin{itemize}
    \item $l$ is a proper closed convex function with an open domain, and is strongly convex on any compact convex subset of $\text{dom} \,l$ and is twice continuously differentiable on $\text{dom} \,l$;
    \item $l(y)=\max_{z\in D}\left\{\right\langle y,z\rangle-q(z)\}$ for all $y\in\mathbb{R}^m$, with $D$ being a polyhedron and $q$ being a strongly convex differentiable function with a Lipschitz continuous gradient.
\end{itemize}
Suppose in addition that $f$ is continuous on $\text{dom}$ $\partial u$, then $u$ is a KL function with exponent $1/2$.
\end{theorem}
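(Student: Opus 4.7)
The plan is to decompose the problem into the individual pieces $u_i(x) := l(Ax) + r_i(x)$, establish the KL property with exponent $1/2$ for each $u_i$ at any stationary point, and then combine using the minimum rule (Lemma~\ref{prop:min KL}). Notice that each $u_i$ is convex (sum of a convex function composed with an affine map and a polyhedral convex function), whereas the min-pointwise $u$ need not be; this is why we must go through the pieces rather than attempt to treat $u$ directly.

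First I would verify that each $u_i$ satisfies the Luo-Tseng error bound (Def.~\ref{def:luo-tseng}). Under the first structural hypothesis on $l$ (strongly convex on compact subsets of its open domain and twice continuously differentiable), the Luo-Tseng error bound for $u_i = l(A\,\cdot) + r_i$ follows from the classical results of Tseng-Yun, and a clean statement is given in~\cite{li2018calculus} (see the treatment in the proof of their Theorem~4.2). Under the second hypothesis on $l$ (dual representation with polyhedral $D$ and strongly convex $q$), the same Luo-Tseng bound holds via the Zhou-So type arguments exploited in~\cite{li2018calculus}. In both cases the key is that the (convex) optimality mapping $\texttt{prox}_{r_i}(\cdot - \nabla(l\circ A)(\cdot)) - \mathrm{Id}$ provides a global measure of distance to the optimal set with a linear error bound, thanks to the polyhedral-plus-(strongly convex composed with linear) structure.

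Next, since each $u_i$ is convex, its set of stationary points $\mathcal{X}_i$ coincides with the set of minimizers, on which $u_i$ is constant; the hypothesis of Lemma~\ref{prop:luo-tseng} (local constancy of the objective on the stationary set) is therefore trivially satisfied. Applying Lemma~\ref{prop:luo-tseng} yields that each $u_i$ is a KL function with exponent $\theta = 1/2$ in the sense of Definition~\ref{def-KL-2}. Continuity of each $u_i$ on $\dom \partial u_i$ is inherited from the continuity of $l$ on its open domain (or the dual representation in the second case) and the continuity of convex polyhedral functions on their domains, so Lemma~\ref{prop:min KL} applies.

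Finally, I would invoke Lemma~\ref{prop:min KL} with $f_i = u_i$ and the common exponent $\theta_i = 1/2$ to conclude that $u = \min_{1\le i\le r} u_i$ is a KL function with exponent $1/2$; the extra continuity assumption on $u$ placed in the theorem statement supplies the hypothesis of Lemma~\ref{prop:min KL} on $f = \min_i f_i$. The main obstacle is really the first step: carefully matching the two classes of $l$ specified in the statement with the precise forms of the Luo-Tseng error bound available in the literature, and ensuring that the bound holds uniformly over sublevel sets (as required in Def.~\ref{def:luo-tseng}). Once this is in place the remainder is a direct application of the exponent-preserving rules already recorded in Lemmas~\ref{prop:luo-tseng} and~\ref{prop:min KL}. \hfill $\square$
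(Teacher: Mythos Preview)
Your approach is essentially the same as the paper's: verify the Luo--Tseng error bound for each convex piece $u_i$ via the structural results in \cite{li2018calculus}, invoke Lemma~\ref{prop:luo-tseng} to get the KL exponent $1/2$ for each $u_i$, and then pass to $u=\min_i u_i$ through Lemma~\ref{prop:min KL}.

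There is one small gap. Lemma~\ref{prop:luo-tseng} requires the stationary set $\mathcal{X}_i$ of $u_i$ to be nonempty, but nothing in the hypotheses forces each $u_i$ to attain its infimum (e.g., $l(Ax)$ may be coercive only along $\mathrm{Im}\,A$ and $r_i$ may fail to compactify the remaining directions). The paper handles the case $\mathrm{argmin}\,u_i=\emptyset$ separately by invoking Lemma~\ref{prop:nonstationary kl}: since $u_i$ is convex, an empty minimizer set means every point of $\dom\,\partial u_i$ is non-stationary, and the KL inequality then holds trivially with any exponent. You should add this one-line case distinction. Also, the hypothesis of Lemma~\ref{prop:min KL} you need to check on the pieces is $\dom u_i=\dom\,\partial u_i$ (which the paper records explicitly), not continuity of $u_i$ on $\dom\,\partial u_i$; the continuity assumption in the theorem statement is on $u$ itself and feeds the other hypothesis of Lemma~\ref{prop:min KL}.
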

\begin{proof}
    Following the analysis in~\cite[Cor. 5.1]{li2018calculus}, one can show that, for each $i$, $u_i$ satisfies all the assumptions in Proposition~\ref{prop:luo-tseng}, and $\text{dom} u_i=\text{dom}\,\partial u_i.$
    The proof follows then  by Proposition~\ref{prop:nonstationary kl} (for the case $\text{argmin}$ $u_i=\emptyset$), Lemma~\ref{prop:luo-tseng} and Lemma~\ref{prop:min KL}.
\end{proof}
\begin{theorem}[nonconvex minimum-of-quadratic regularizers]
\label{thm:nonconvex}
Consider the following class of functions
$$u(x)=\min_{1\leq i\leq r}\underbrace{\left\{\frac{1}{2}x^\top A_i x+b_i^\top x+\beta_i+r_i(x)\right\}}_{u_i(x)},$$
 where $r_i$ are proper closed polyhedral functions, $A_i\in\mathbb{R}^{n\times n}$ is symmetric, $b_i\in\mathbb{R}^n$  and $\beta_i\in\mathbb{R}^n$ for $i=1,\cdots,r$. Suppose, in addition, that $u$ is continuous on $\text{dom}\,\partial u$. Then $u$ is a KL function with exponent $1/2$.
\end{theorem}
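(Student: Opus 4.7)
The plan is to reduce the claim to each individual component $u_i(x) := \frac{1}{2}x^\top A_i x + b_i^\top x + \beta_i + r_i(x)$ and then aggregate through the minimum via Lemma~\ref{prop:min KL}. First I would establish that each $u_i$ is a KL function (in the sense of Definition~\ref{def-KL-2}) with exponent $1/2$; second, I would verify the compatibility hypothesis of Lemma~\ref{prop:min KL}, namely $\text{dom}\,u_i = \text{dom}\,\partial u_i$ for every $i$ (continuity of $u$ on $\text{dom}\,\partial u$ is already assumed). Since $r_i$ is polyhedral, its subdifferential is nonempty everywhere on $\text{dom}\,r_i$, and hence $\partial u_i(x) = A_i x + b_i + \partial r_i(x)$ is nonempty on $\text{dom}\,u_i$, which gives the required domain equality.

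For each $u_i$ the smooth part is quadratic with Lipschitz gradient (constant $\|A_i\|_2$) and $r_i$ is proper closed convex as a polyhedral function, so Assumption~\ref{ass:function} is met. At any $\bar x\in\text{dom}\,\partial u_i$ with $0\notin\partial u_i(\bar x)$, Lemma~\ref{prop:nonstationary kl} immediately furnishes the KL property at $\bar x$ with exponent $1/2$. At a stationary point $\bar x$, I would invoke Lemma~\ref{prop:luo-tseng}. The stationary set $\mathcal{X}_i:=\{x:\,0\in A_ix+b_i+\partial r_i(x)\}$ is a finite union of polyhedra, because $\partial r_i$ has polyhedral graph and intersecting it with the graph of the affine map $x\mapsto -A_ix-b_i$ yields a polyhedral set; on each such polyhedral piece $r_i$ reduces to an affine function and the quadratic evaluates to a constant along the piece (since, by stationarity, $A_ix+b_i$ equals a constant there), whence $u_i$ takes only finitely many values on $\mathcal{X}_i$. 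This delivers the local constancy of $u_i$ on $\mathcal{X}_i$ required by Lemma~\ref{prop:luo-tseng}.

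The core technical input---and the expected main obstacle---is verifying the Luo-Tseng error bound (Definition~\ref{def:luo-tseng}) for the nonconvex quadratic-plus-polyhedral structure; this cannot be obtained from convexity-based estimates. The plan is to adapt the classical Tseng-Yun argument used in the proof of Corollary 5.2 of \cite{li2018calculus}: write the proximal residual $x-\texttt{prox}_{r_i}(x-A_ix-b_i)$ as a piecewise-linear (polyhedral) map whose zero set equals $\mathcal{X}_i$, and apply Hoffman's lemma cell-wise on the finitely many pieces determined by the polyhedral structure of $r_i$, thereby bounding $\mathrm{dist}(x,\mathcal{X}_i)$ linearly by the residual on any sublevel set. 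Once this error bound is in hand, Lemma~\ref{prop:luo-tseng} certifies that each $u_i$ is a KL function with exponent $1/2$, and a final invocation of Lemma~\ref{prop:min KL} delivers $u=\min_i u_i$ as a KL function with exponent $\max_i\{1/2\}=1/2$, completing the proof.
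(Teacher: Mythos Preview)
Your proposal is correct and follows essentially the same route as the paper: the paper's proof invokes \cite[Cor.~5.2]{li2018calculus} to verify that each $u_i$ satisfies the Luo-Tseng error bound and the local constancy hypothesis of Lemma~\ref{prop:luo-tseng} (with $\text{dom}\,u_i=\text{dom}\,\partial u_i$), then combines Lemma~\ref{prop:nonstationary kl}, Lemma~\ref{prop:luo-tseng}, and Lemma~\ref{prop:min KL} exactly as you outline. Your sketch simply unpacks more of the polyhedral/Hoffman details that the paper delegates to the reference.
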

\begin{proof}
    Following the analysis in~\cite[Cor.~5.2]{li2018calculus}, we have that, for each $i$,
    $u_i(x)$ satisfies all assumptions in Proposition~\ref{prop:luo-tseng} and $\text{dom} u_i=\text{dom}\partial u_i.$  Then, the proof follows  combining Proposition~\ref{prop:nonstationary kl} (for the case $\text{argmin}$ $u_i=\emptyset$),~\ref{prop:luo-tseng} with Lemma~\ref{prop:min KL}.
    \end{proof}
\begin{remark}
\label{remark:examples}
    By Theorem~\ref{thm:convex}, we have that objective functions in the example (i) and (iii) in Sec.~\ref{sub-examples} are KL functions with exponent $\theta=1/2$. By Theorem~\ref{thm:nonconvex}, we have that objective functions of the example (ii) and (iv) in Sec.~\ref{sub-examples} are KL functions with exponent $\theta=1/2$. Objective function of example (v) in Sec.~\ref{sub-examples} is a KL function with exponent $1/2$, as  it satisfies the even stronger   gradient dominance condition in~\cite[Def.1]{zhou2017characterization}. Finally, by Proposition~\ref{prop:algebr}, the objective function of example (vi) in Sec~\ref{sub-examples} is a KL function with $\theta
    \in[0,1)$. This proves our claim that all the examples listed in Sec~\ref{sub-examples} share the same exponent $\theta$ in  both definitions (Def.~\ref{def-KL} and Def.~\ref{def-KL-2}).
\end{remark}

\end{document}